\DeclareTextSymbolDefault{\textquotedbl}{T1}
\numberwithin{equation}{section}
\numberwithin{figure}{section}
\theoremstyle{plain}
\newtheorem{thm}{\protect\theoremname}
\theoremstyle{definition}
\newtheorem{defn}[thm]{\protect\definitionname}
\theoremstyle{plain}
\newtheorem{prop}[thm]{\protect\propositionname}
\theoremstyle{remark}
\newtheorem{rem}[thm]{\protect\remarkname}
\theoremstyle{definition}
\newtheorem{example}[thm]{\protect\examplename}
\theoremstyle{plain}
\newtheorem{lem}[thm]{\protect\lemmaname}
\theoremstyle{plain}
\newtheorem{cor}[thm]{\protect\corollaryname}
\setlist[enumerate]{leftmargin=*,label=(\roman*),align=left}
\newcommand{\xyR}[1]{ \makeatletter
\xydef@\xymatrixrowsep@{#1} \makeatother} 
\newcommand{\xyC}[1]{ \makeatletter
\xydef@\xymatrixcolsep@{#1} \makeatother} 
\newcommand{\ra}{\longrightarrow}
\newcommand{\field}[1]{\mathbb{#1}}
\newcommand{\R}{\field{R}} 
\newcommand{\N}{\field{N}} 
\renewcommand{\phi}{\varphi}
\newcommand{\diff}[1]{\ifmmode\mathchoice{\hbox{\rm d}#1}  
 {\hbox{\rm d}#1}  
 {\scalebox{0.75}{$\hbox{\rm d}#1$}}  
 {\scalebox{0.35}{$\hbox{\rm d}#1$}}  
 \fi} 
\newcommand{\abs}[2][\empty]{\ifx#1\empty\left|#2\right|%
\else#1\vert #2 #1\vert\fi}
\newcommand{\frontRise}[2]{\ifmmode\mathchoice{{\vphantom{#1}}^{\scalebox{0.6}{$#2$}}}  
 {{\vphantom{#1}}^{\scalebox{0.56}{$#2$}}}  
 {{\vphantom{#1}}^{\scalebox{0.47}{$#2$}}}  
 {{\vphantom{#1}}^{\scalebox{0.35}{$#2$}}}\fi} 
\newcommand{\frontRiseDown}[3]{\ifmmode\mathchoice{{\vphantom{#1}}^{\scalebox{0.6}{$#2$}}_{\scalebox{0.6}{$#3$}}}  
 {{\vphantom{#1}}^{\scalebox{0.56}{$#2$}}_{\scalebox{0.56}{$#3$}}}  
 {{\vphantom{#1}}^{\scalebox{0.47}{$#2$}}_{\scalebox{0.47}{$#3$}}}  
 {{\vphantom{#1}}^{\scalebox{0.35}{$#2$}}_{\scalebox{0.35}{$#3$}}}\fi} 
\newcommand{\norm}[2][\empty]{\ifx#1\empty\left\Vert#2\right\Vert%
\else#1\Vert #2 #1\Vert\fi}
\providecommand{\corollaryname}{Corollary}
\providecommand{\definitionname}{Definition}
\providecommand{\examplename}{Example}
\providecommand{\lemmaname}{Lemma}
\providecommand{\propositionname}{Proposition}
\providecommand{\remarkname}{Remark}
\providecommand{\theoremname}{Theorem}
\begin{document}
\title[A new approach to weighted Sobolev spaces]{A new approach to weighted Sobolev spaces}
\author{Djamel eddine Kebiche}
\address{\textsc{Faculty of Mathematics, University of Vienna, Austria, Oskar-Morgenstern-Platz
1, 1090 Wien, Austria}}
\thanks{This research was funded in whole or in part by the Austrian Science
Fund (FWF) 10.55776/P33538\\
For open access purposes, the author has applied a CC BY public copyright
license to any author-accepted manuscript version arising from this
submission. }
\email{\texttt{djameleddine.kebiche@univie.ac.at }}
\subjclass[2020]{46E35, 35J70, 35A23}
\keywords{Weighted Sobolev spaces, degenerate elliptic PDEs, Poincaré inequality. }
\begin{abstract}
We present in this paper a new way to define weighted Sobolev spaces
when the weight functions are arbitrary small. This new approach can
replace the old one consisting in modifying the domain by removing
the set of points where at least one of the weight functions is very
small. The basic idea is to replace the distributional derivative
with a new notion of weak derivative. In this way, non-locally integrable
functions can be considered in these spaces. Indeed, assumptions under
which a degenerate elliptic partial differential equation has a unique
non-locally integrable solution are given. Tools like a Poincaré inequality
and a trace operator are developed, and density results of smooth
functions are established. 
\end{abstract}

\maketitle

\section{Introduction }

By a weight function $u:\Omega\ra\R$ we mean a locally integrable
function which is non-negative almost everywhere in $\Omega$. For
$p\in[1,\infty)$, $L^{p}(\Omega,u)$ denotes the space of all functions
$f:\Omega\ra\R$ satisfying $fu^{1/p}\in L^{p}(\Omega)$. Let $m\in\N$
and consider a collection $S:=\{u_{\alpha}\}_{\alpha\in\pi_{m}}$
($\pi_{m}:=\{\alpha\in\N^{d}\mid|\alpha|\leq m$\}) of weight functions
on $\Omega$. The weighted Sobolev space $W^{m,p}(\Omega,S)$ is defined
as the space of all functions $f\in L^{p}(\Omega,u)\cap L_{\mathrm{loc}}^{1}(\Omega)$
such that their distributional derivative are elements of $L^{p}(\Omega,u_{\alpha})\cap L_{\mathrm{loc}}^{1}(\Omega)$.
The space $W^{m,p}(\Omega,S)$ is equipped with the norm 
\[
\forall f\in W^{m,p}(\Omega,S):\,||f||_{W^{m,p}(\Omega,S)}:=\left(\sum_{|\alpha|\leq m}||u_{\alpha}^{1/p}D^{\alpha}f||_{L^{p}}^{p}\right)^{1/p}.
\]
A sufficient condition for the space $W^{m,p}(\Omega,S)$ to be a
Banach space is 
\begin{equation}
\forall\alpha\in\pi_{m}:\,u_{\alpha}^{-1}\in L_{\mathrm{loc}}^{\frac{1}{p-1}}(\Omega)\label{eq:1.1}
\end{equation}
(see Thm.~$1.11$ of \cite{Kuf}). Indeed, the assumption (\ref{eq:1.1})
implies that for all $\alpha\in\pi_{m}$, the space $L^{p}(\Omega,u_{\alpha})$
is continuously embedded into $L_{\mathrm{loc}}^{1}(\Omega)$, and
the functional $W^{m,p}(\Omega,S)\ra\R$, $f\ra\int_{\Omega}f\partial^{\alpha}\phi\diff x$
($\phi\in\mathcal{D}(\Omega)$) is continuous, a crucial property
used in the proof of the completeness of the space $W^{m,p}(\Omega,S)$
(see Remark $1.8$ of \cite{Kuf}). However, when (\ref{eq:1.1})
is violated, the weighted Sobolev space $W^{m,p}(\Omega,S)$ is not
necessary a Banach space (see Example 1.12 of \cite{Kuf}). This is
because, when (\ref{eq:1.1}) is violated, the latter functionals
are, in general, no longer continuous. The existing remedy consists
in replacing the set $\Omega$ with a smaller one so that the property
(\ref{eq:1.1}) holds. More precisely, the new set, which will be
denoted by $\Omega_{S}^{*}$, is defined by $\Omega_{S}^{*}:=\Omega\backslash\cup_{\alpha\in\pi_{m}}P(u_{\alpha})$
where $P(u_{\alpha})$ is the set of points $x\in\Omega$ for which
$u_{\alpha}^{-1}\not\in L^{\frac{1}{p-1}}(\Omega\cap U)$ for all
neighborhood $U$ of $x$. The set $B:=\cup_{\alpha\in\pi_{m}}P(u_{\alpha})$
is closed (and hence $\Omega_{S}^{*}$ is open) and Lebesgue negligible.
Moreover, property (\ref{eq:1.1}) holds when $\Omega$ is replaced
with $\Omega_{S}^{*}$ and hence the weighted Sobolev space $W^{m,p}(\Omega_{S}^{*},S)$
is a Banach space (see Sec.~3 of \cite{Kuf} for more details). However,
the disadvantage of this remedy is a modification of the domain. As
a consequence, the convexity might be lost and the boundary of $\Omega$
is changed. 

The alternative we present in this paper consists in replacing the
notion of distributional derivative with a new notion of weak derivative,
called weak derivative in the sense of $L_{v,\mathrm{loc}}^{1}(\Omega)$
($v$ being a $\mathcal{C}^{m}(\Omega)$ function). It consists in
replacing the space of test functions, which is $\mathcal{C}_{c}^{m}(\Omega_{S}^{*})$
in the case of $W^{m,p}(\Omega_{S}^{*},S)$, with a larger one whose
elements do not necessary vanish near the set $B$ (see Def.~\ref{def:=000020Weak=000020derivative=000020P-alpha=000020}).
When the function $v$ is well chosen, i.e.~$\Omega_{S}^{*}=\Omega_{|v|^{mp+p}}^{*}:=\Omega\backslash P(|v|^{mp+p})$,
the two notions of derivative coincide (see Prop.~\ref{prop:comparison=000020with=000020weak=000020derivative=000020}). 

In Sec.~\ref{sec:Density-of-smooth}, several density results of
smooth functions in weighted Sobolev spaces are proved. In Sec.~\ref{sec:The-trace-operator},
we give sufficient conditions under which (continuous) trace operators
can be defined. A Poincaré inequality is established in Sec.~\ref{sec:Poincar=0000E9-inequality}.
In the last section, we deal with degenerate elliptic linear partial
differential equations, where in particular we give assumptions under
which a solution is not locally integrable. 

\section{\protect\label{sec:The-space-L^p}The weighted Lebesgue space $L_{w}^{p}(\Omega)$ }

Throughout this paper, a weight function $u:\Omega\ra\R$ is a locally
integrable function, which is positive a.e. in $\Omega$. The set
of all the weight functions on $\Omega$ is denoted by $W(\Omega)$. 

We now give a definition of weighted Lebesgue spaces (using notations
slightly different from the classical ones) 
\begin{defn}
\label{def:sobolev=000020spaces=000020}Let $\Omega\subseteq\R^{d}$
be an open set, $p\in[1,\infty)$, and let $w$ be such that $|w|^{p}\in W(\Omega)$
(in the sequel we write simply $w\in E_{p}(\Omega)$). We define the
weighted Lebesgue space $L_{w}^{p}(\Omega)$ as the space of measurable
functions $f$ on $\Omega$ such that $fw\in L^{p}(\Omega)$. This
space will be equipped with the norm $||\cdot||_{L_{w}^{p}}$ defined
by 
\begin{equation}
\forall f\in L_{w}^{p}(\Omega):\,||f||_{L_{w}^{p}(\Omega)}:=||fw||_{L^{p}(\Omega)}.\label{eq:=000020weighted=000020norm}
\end{equation}
The space $L_{w}^{2}(\Omega)$ is equipped with the scalar product
$(\cdot,\cdot)_{L_{w}^{2}}$ defined by 
\begin{equation}
\forall f,g\in L_{w}^{2}(\Omega):\,(f,g)_{L_{w}^{2}}:=(wf,wg)_{L^{2}}\label{eq:=000020scalar=000020product-1}
\end{equation}
where $(\cdot,\cdot)_{L^{2}}$ is the usual scalar product of the
space $L^{2}(\Omega)$. 
\end{defn}

Moreover, we write $f\in L_{w,\mathrm{loc}}^{p}(\Omega)$ if $f\in L_{w}^{p}(K)$
for all $K\Subset\Omega$. 

If we denote by $L^{p}(\Omega,u)$ the weighted Lebesgue space with
the weight function $u$, then we clearly have 
\[
L_{w}^{p}(\Omega)=L^{p}(\Omega,|w|^{p}).
\]
The reason for using $w$ instead of the weight function $|w|^{p}$
is purely for notational reason. 

The following theorem is well known (see e.g.~Thm.~III.~6.6 of
\cite{DuSch}).
\begin{thm}
\label{thm:HBT}Let $\Omega\subseteq\R^{d}$ be an open set, $w\in E_{p}(\Omega)$,
and $p\in[1,\infty)$. Then, the space $L_{w}^{p}(\Omega)$ equipped
with the norm (\ref{eq:=000020weighted=000020norm}) is a Banach space
(Hilbert space when $p=2$). 
\end{thm}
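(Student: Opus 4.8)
The plan is to reduce the claim to the classical completeness of the unweighted space $L^{p}(\Omega)$ by exhibiting an isometric isomorphism. The key observation is that the multiplication map $T_{w}\colon L_{w}^{p}(\Omega)\to L^{p}(\Omega)$ defined by $T_{w}(f):=fw$ is, by the very definition of the norm in \eqref{eq:=000020weighted=000020norm}, a linear isometry: for every $f\in L_{w}^{p}(\Omega)$ we have $\norm{T_{w}(f)}_{L^{p}(\Omega)}=\norm{fw}_{L^{p}(\Omega)}=\norm{f}_{L_{w}^{p}(\Omega)}$. Thus the entire statement follows once I show that $T_{w}$ is a bijection onto $L^{p}(\Omega)$, since an isometric isomorphism transports completeness (and, when $p=2$, the inner-product structure via \eqref{eq:=000020scalar=000020product-1}) from the codomain to the domain.

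First I would verify injectivity of $T_{w}$. Because $|w|^{p}\in W(\Omega)$ is a weight function, it is positive almost everywhere, hence $w\neq 0$ a.e.; therefore $fw=0$ a.e. forces $f=0$ a.e., which is the identity of $L_{w}^{p}(\Omega)$. Next I would establish surjectivity. Given any $g\in L^{p}(\Omega)$, set $f:=g/w$ on the full-measure set where $w\neq 0$ (and, say, $f:=0$ on the negligible exceptional set). This $f$ is measurable as a quotient of measurable functions, and by construction $fw=g\in L^{p}(\Omega)$, so $f\in L_{w}^{p}(\Omega)$ and $T_{w}(f)=g$. Combined with linearity of multiplication, this shows $T_{w}$ is a surjective linear isometry, hence a bijective isometric isomorphism of normed spaces.

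It then remains to transport the structure. Since $L^{p}(\Omega)$ is complete (the classical Riesz--Fischer theorem), and an isometric isomorphism carries Cauchy sequences to Cauchy sequences and preserves limits, $L_{w}^{p}(\Omega)$ is complete as well: if $(f_{n})$ is Cauchy in $L_{w}^{p}(\Omega)$ then $(f_{n}w)$ is Cauchy in $L^{p}(\Omega)$, converges to some $g$, and $T_{w}^{-1}(g)$ is the $L_{w}^{p}$-limit of $(f_{n})$. For the case $p=2$, I would additionally note that the norm \eqref{eq:=000020weighted=000020norm} is induced by the inner product \eqref{eq:=000020scalar=000020product-1}, namely $(f,f)_{L_{w}^{2}}=\norm{wf}_{L^{2}}^{2}=\norm{f}_{L_{w}^{2}}^{2}$, and that $T_{w}$ pulls back the $L^{2}$ inner product precisely to $(\cdot,\cdot)_{L_{w}^{2}}$; hence $L_{w}^{2}(\Omega)$ is a complete inner-product space, i.e.\ a Hilbert space.

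I do not expect any genuine obstacle here, since the result is essentially a reformulation of the completeness of $L^{p}$. The only point requiring a modicum of care is the well-definedness of the inverse map on the measure-zero set $\{w=0\}$; because this set is Lebesgue negligible (as $|w|^{p}$ is positive a.e.), the choice of values of $f$ there does not affect the equivalence class in $L_{w}^{p}(\Omega)$, so the argument goes through cleanly.
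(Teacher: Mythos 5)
Your proof is correct, but it takes a different route from the paper, which in fact offers no argument of its own: it records the result as well known and cites Thm.~III.6.6 of Dunford--Schwartz. The viewpoint behind that citation is that $L_{w}^{p}(\Omega)$ \emph{is} the space $L^{p}(\Omega,\mu)$ for the weighted measure $\diff\mu=|w|^{p}\diff x$, so completeness is just the general Riesz--Fischer theorem over an arbitrary measure space. You instead construct an explicit isometric isomorphism $T_{w}\colon f\mapsto fw$ onto the \emph{unweighted} space $L^{p}(\Omega)$ and pull completeness (and, for $p=2$, the inner product) back through $T_{w}^{-1}$. Your route is self-contained modulo only the classical completeness of $L^{p}(\Omega)$ with Lebesgue measure, and it makes the Hilbert-space assertion for $p=2$ transparent, since $T_{w}$ visibly intertwines $(\cdot,\cdot)_{L_{w}^{2}}$ with the standard $L^{2}$ inner product; the paper's route is shorter and reflects the standard identification of a weighted Lebesgue space with $L^{p}$ of a weighted measure. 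Two minor points are worth recording in your write-up: the measurability of $g/w$ in the surjectivity step uses that $w$ itself is measurable (the paper's hypothesis is literally on $|w|^{p}$, but measurability of $w$ is clearly intended and harmless); and injectivity of $T_{w}$ is exactly the statement that $\norm{\cdot}_{L_{w}^{p}}$ is a genuine norm on a.e.-equivalence classes, which, as you observe, follows from $|w|^{p}>0$ a.e. With those observations your argument is complete.
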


Let $\mathcal{S}$ be the Schwartz space. For a fixed non-null polynomial
$P$ we set 
\[
\mathcal{S}_{P}=\{\psi\in\mathcal{S}:\,\psi=P\phi,\phi\in\mathcal{S}\}
\]
which is a linear subspace of $\mathcal{S}$ equipped with the sequence
$(||\cdot||_{k})$ of semi norms of $\mathcal{S}$. Having denoted
by $\mathcal{S}'_{P}$ the dual space of $\mathcal{S}{}_{P}$, we
have the following proposition. 
\begin{prop}
\label{prop:5}Let $P$ be a non-null polynomial function and let
$p\in[1,\infty)$. Then, any element $f\in L_{P}^{p}(\R^{d})$ defines
an element of $\mathcal{S}'_{P}$ in following way 
\[
\forall\psi\in\mathcal{S}_{P}:\,\langle f,\psi\rangle_{\mathcal{S}'_{P},\mathcal{S}{}_{P}}:=\int_{\R^{d}}f(x)\psi(x)\,\diff x.
\]
\end{prop}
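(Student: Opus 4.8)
The plan is to verify two things: that the integral is finite for every $\psi\in\mathcal{S}_P$ (so that the pairing is well defined), and that the resulting linear functional is continuous for the topology on $\mathcal{S}_P$ induced by the Schwartz seminorms $\|\cdot\|_k$. First I would settle well-definedness. Write $\psi=P\phi$ with $\phi\in\mathcal{S}$. Since $f\in L_P^p(\R^d)=L^p(\R^d,|P|^p)$ means exactly that $fP\in L^p(\R^d)$, and $\phi\in\mathcal{S}\subseteq L^{p'}(\R^d)$ with $p'$ the conjugate exponent ($p'=\infty$ when $p=1$), Hölder's inequality gives
\[
\int_{\R^d}|f\psi|\,\diff x=\int_{\R^d}|fP|\,|\phi|\,\diff x\le\|fP\|_{L^p}\,\|\phi\|_{L^{p'}}<\infty.
\]
Thus $f\psi\in L^1(\R^d)$, the integral defining $\langle f,\psi\rangle_{\mathcal{S}'_P,\mathcal{S}_P}$ converges absolutely, and linearity in $\psi$ is immediate.

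For continuity, the same Hölder estimate reduces matters to bounding $\|\phi\|_{L^{p'}}$ by a seminorm of $\psi=P\phi$: if I can establish a \emph{division estimate}
\[
\|\phi\|_{L^{p'}}=\|\psi/P\|_{L^{p'}}\le C\,\|\psi\|_k\qquad(\psi\in\mathcal{S}_P)
\]
for some index $k$ and constant $C$ independent of $\psi$, then $|\langle f,\psi\rangle_{\mathcal{S}'_P,\mathcal{S}_P}|\le C\|fP\|_{L^p}\|\psi\|_k$, so the functional is continuous and hence belongs to $\mathcal{S}'_P$. Equivalently, this estimate says that multiplication by $P$ is a topological embedding of $\mathcal{S}$ onto $\mathcal{S}_P$; one could then also argue by writing the functional as $\phi\mapsto\int_{\R^d}(fP)\phi\,\diff x$, which is continuous on $\mathcal{S}$ because $fP\in L^p\hookrightarrow\mathcal{S}'$, precomposed with the continuous inverse $\psi\mapsto\psi/P$.

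To prove the division estimate I would split $\R^d$ according to the real zero set $Z:=\{P=0\}$. Away from $Z$ and at infinity the quotient $\psi/P$ is harmless: on a region where $|P(x)|$ is bounded below by $\delta(1+|x|)^{-N}$ one controls the contribution to $\|\psi/P\|_{L^{p'}}$ using the rapid decay of $\psi$ encoded in the seminorms $\|\psi\|_k$. The delicate region is a shrinking neighborhood of $Z$; there the crucial point is that $\psi=P\phi$ forces $\psi$ to vanish on $Z$ to the order of vanishing of $P$, so a Taylor expansion of $\psi$ transverse to $Z$ expresses $\psi/P$ in terms of derivatives of $\psi$, again controlled by finitely many seminorms $\|\psi\|_k$. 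The quantitative input that makes this uniform is a Łojasiewicz-type lower bound $|P(x)|\ge c\,\mathrm{dist}(x,Z)^r(1+|x|)^{-N}$.

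The main obstacle is exactly this division estimate near the zero variety of $P$: when $P$ has real zeros the pointwise reciprocal $1/P$ is singular and $Z$ may be a complicated, possibly noncompact and higher-dimensional algebraic set, so controlling $\psi/P$ uniformly by Schwartz seminorms is not a routine computation. (When $P$ has no real zeros it is immediate, since then $1/P$ is a smooth function of polynomial growth whose derivatives also have polynomial growth, hence a multiplier on $\mathcal{S}$, and $\|\phi\|_k\le C\|\psi\|_k$ follows at once.) This is in essence the classical fact that $P\,\mathcal{S}$ is a closed subspace of $\mathcal{S}$ and that multiplication by $P$ is a topological homomorphism, for which the Łojasiewicz–Hörmander division theory supplies the required seminorm bounds.
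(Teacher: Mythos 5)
Your proposal is correct and takes essentially the same route as the paper: Hölder's inequality for well-definedness and the basic bound, followed by the continuity of the inverse of the multiplication map $M_P:\mathcal{S}\to\mathcal{S}_P$ (Hörmander's division theorem, cited in the paper as Thm.~1 of \cite{Hor}) to convert the bound on $\phi$ into a Schwartz seminorm of $\psi$. The only difference is that you sketch the proof of that division estimate (Łojasiewicz lower bound plus Taylor expansion near the zero set), whereas the paper simply cites it; your sketch is the right idea but is not needed, since the classical theorem can be invoked directly.
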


\begin{proof}
Let $f\in L_{P}^{p}(\R^{d})$, $\psi\in\mathcal{S}_{P}$ and pick
$\phi$ so that $\psi=P\phi$. By Hölder's inequality we have 
\[
\left|\langle f,\psi\rangle_{\mathcal{S}'_{P},\mathcal{S}_{P}}\right|\leq\int_{\R^{d}}\left|f(x)P(x)\phi(x)\right|\,\diff x\leq||f||_{L_{P}^{p}}||\phi||_{L^{p'}}\leq C||f||_{L_{P}^{p}}||\phi||_{k}
\]
for some $k\in\N$ and $C>0$ that depend only on $d$ and $p$, where
$p'$ is the conjugate exponent of $p$. By Thm.~1 of \cite{Hor},
the linear continuous multiplication map $M_{P}:\mathcal{S}\ra\mathcal{S}_{P}$
defined by $M_{P}\phi=P\phi$ has a continuous inverse $M_{P}^{-1}:\mathcal{S}_{P}\ra\mathcal{S}$.
It follows that 
\[
\exists C'>0\,\exists k'\in\N\,\forall\psi\in\mathcal{S}_{P}:\,\left|\langle f,\psi\rangle_{\mathcal{S}'_{P},\mathcal{S}_{P}}\right|\leq C'||f||_{L_{P}^{p}}||\psi||_{k'}
\]
which completes the proof. 
\end{proof}

\section{\protect\label{sec:Weighted-weak-derivative}The weak derivative
in the sense of $L_{v,\mathrm{loc}}^{1}(\Omega)$}

Let $w\in E_{p}(\Omega)$. When $w$ is arbitrary small, elements
of $L_{w}^{p}(\Omega)$ are not necessary locally integrable over
$\Omega$, and hence the notion of distributional derivative to elements
of $L_{w}^{p}(\Omega)$ is not well defined in general. The existing
remedy is to replace $\Omega$ with $\Omega_{|w|^{p}}^{*}:=\Omega\backslash P(|w|^{p})$
where $P(|w|^{p})$ is the set of points $x\in\Omega$ for which 
\[
|w|^{-p}\not\in L^{\frac{1}{p-1}}(\Omega\cap U)
\]
for all neighborhood $U$ of $x$. The set $P(|w|^{p})$ is closed
and Lebesgue negligible. Moreover, $L_{w}^{p}(\Omega)\subseteq L_{\text{loc}}^{1}(\Omega_{|w|^{p}}^{*})$
with continuous embedding (see Sec.~3 of \cite{Kuf} for more details).
Hence, the notion of weak derivative in the sense of $L_{\mathrm{loc}}^{1}(\Omega_{|w|^{p}}^{*})$
is well defined on the space $L_{w}^{p}(\Omega)$. However, note that
with this weak derivative, we are allowed to take only test functions
having a compact support in $\Omega_{|w|^{p}}^{*}$. 

The main aim of this section is to present a new notion of weak derivative
where the test functions are not necessary equal to $0$ near the
set $P(|w|^{p})$ as it is the case for the weak derivative in the
sense of $L_{\mathrm{loc}}^{1}(\Omega_{|w|^{p}}^{*})$. 
\begin{defn}
\label{def:=000020Weak=000020derivative=000020P-alpha=000020}Let
$\Omega\subseteq\R^{d}$ be an open set, $\alpha\in\N^{d}$ with $|\alpha|\neq0$,
$p\in[1,\infty)$, $w\in E_{p}(\Omega)$, and $v\in\mathcal{C}^{|\alpha|,*}(\Omega)$
(i.e.~$v\in\mathcal{C}^{|\alpha|}(\Omega)$ and $v\neq0$ a.e. in
$\Omega$) be a map satisfying 
\begin{equation}
\forall K\Subset\Omega\,\exists C>0:\,|v|\leq C|w|\,\,\,\text{a.e. in }K.\label{eq:weak=000020derivative=00002011}
\end{equation}
We say that $f\in L_{w,\text{loc}}^{1}(\Omega)$ is $\alpha$-weakly
differentiable in the sense of $L_{v,\text{loc}}^{1}(\Omega)$ if
there exists an element of $L_{v^{|\alpha|+1},\text{loc}}^{1}(\Omega)$
denoted by $D_{v}^{\alpha}f$ satisfying 
\begin{equation}
\forall\phi\in\mathcal{D}(\Omega):\,\int_{\Omega}f\partial^{\alpha}(v^{|\alpha|+1}\phi)\,\diff x=(-1)^{|\alpha|}\int_{\Omega}v^{|\alpha|+1}\phi D_{v}^{\alpha}f\,\diff x.\label{eq:=000020alpha-Weak=000020=000020derivative}
\end{equation}
\end{defn}

In the sequel, for some given $f\in L_{w,\text{loc}}^{1}(\Omega)$,
we will frequently write $\exists D_{v}^{\alpha}f$ to say that the
$\alpha$-weak derivative of $f$ in the sense of $L_{v,\text{loc}}^{1}(\Omega)$
exists (and hence belongs to $L_{v^{|\alpha|+1},\text{loc}}^{1}(\Omega)$). 
\begin{rem}
~\label{rem:weak=000020alpha=000020derivative=000020} Let $\Omega$,
$\alpha$, $p$, $w$, $v$ be as in the previous definition. 
\begin{enumerate}
\item The weak derivative in the sense of $L_{v,\text{loc}}^{1}(\Omega)$
(when it exists) is unique. The proof is identical to the proof of
the uniqueness of the weak derivative in the sense of $L_{\text{loc}}^{1}(\Omega)$. 
\item \label{enu:weak=000020alpha=000020derivative=000020}Take $\alpha$
with $|\alpha|=1$ and let $f\in L_{w}^{p}(\Omega)$. Assume that
$\exists D_{v}^{\alpha}f\in L_{v^{2}}^{p}(\Omega)$, and that $f\partial^{\alpha}(v^{2})$,
$v^{2}f\in L^{p}(\Omega)$, e.g.~$\Omega$ is bounded and $\nabla v\in L^{\infty}(\Omega)$,
then (\ref{eq:=000020alpha-Weak=000020=000020derivative}) yields
\[
\forall\phi\in\mathcal{D}(\Omega):\,\int_{\Omega}\left(f\partial^{\alpha}(v^{2})+v^{2}D_{v}^{\alpha}f\right)\phi\,\diff x=-\int_{\Omega}v^{2}f\partial^{\alpha}\phi\,\diff x
\]
which implies that $v^{2}f$ is $\alpha$-weakly differentiable in
the sense of $L_{\text{loc}}^{1}(\Omega)$ (or simply $\exists D^{\alpha}(v^{2}f)$)
and $D^{\alpha}(v^{2}f)=f\partial^{\alpha}(v^{2})+v^{2}D_{v}^{\alpha}f\in L^{p}(\Omega)$.
Consequently, if we assume that the above conditions hold for all
$\alpha\in\N^{d}$ with $|\alpha|=1$ we get that $v^{2}f\in W^{1,p}(\Omega)$. 
\item \label{enu:=000020continuity=000020weak=000020derivative=000020}One
can show that for any $\phi\in\mathcal{D}(\Omega)$, there exists
a compactly supported continuous function $h_{\alpha}$ on $\Omega$
such that $\partial^{\alpha}(v^{|\alpha|+1}\phi)=vh_{\alpha}$. By
(\ref{eq:weak=000020derivative=00002011}) we have $|\partial^{\alpha}(v^{|\alpha|+1}\phi)|\leq C|w|h_{\alpha}$
a.e. on $K:=\text{supp}(\phi)$. It follows that 
\[
\left|\int_{\Omega}f\partial^{\alpha}(v^{|\alpha|+1}\phi)\,\diff x\right|\leq C||f||_{L_{w}^{p}(\Omega)}||h_{\alpha}||_{L^{\infty}(K)}.
\]
Hence, the left hand side of (\ref{eq:=000020alpha-Weak=000020=000020derivative})
is always finite. Moreover, for any test function $\phi$, the functional
$L_{w}^{p}(\Omega)\ra\mathbb{C}$, $f\ra\int_{\Omega}f\partial^{\alpha}(v^{|\alpha|+1}\phi)\,\diff x$
is continuous. This property will be used to prove the completeness
of the spaces $W_{V,v}^{m,p}(\Omega)$ defined below. 
\item \label{enu:=000020test=000020functions=000020}By density of $\mathcal{D}(\Omega)$
in $\mathcal{C}_{c}^{|\alpha|}(\Omega)$, we can equally well use
the space $\mathcal{C}_{c}^{|\alpha|}(\Omega)$ instead of the space
$\mathcal{D}(\Omega)$ in (\ref{eq:=000020alpha-Weak=000020=000020derivative}).
This is because using (\ref{eq:weak=000020derivative=00002011}) we
have 
\[
\left|\int_{\Omega}f\partial^{\alpha}(v^{|\alpha|+1}\phi)\,\diff x\right|\leq C||fw||_{L^{1}(K)}\sum_{0\leq\beta\leq\alpha}C_{\beta}||g_{\beta}||_{L^{\infty}(K)}||\partial^{\beta}\phi||_{L^{\infty}(K)},
\]
where $K$ is the support of $\phi$, $g_{\beta}$ is a continuous
function that depends on $v$, $\alpha$ and $\beta$. 
\item In case $w\in\mathcal{C}^{|\alpha|}(\Omega)$ we can simply choose
$v=w$. 
\item The case $w\equiv1$ can be considered since (\ref{eq:weak=000020derivative=00002011})
trivially holds. 
\end{enumerate}
\end{rem}

\begin{example}
Let $f(x)=1/x^{2}\in L_{x^{\text{2}}}^{2}(-1,1)$. Then, $\exists D_{x^{2}}f=-2x^{-3}\in L_{x^{3}}^{2}(-1,1)$. 
\end{example}

Now, we compare the notion of weak derivative in the sense of $L_{v,\mathrm{loc}}^{1}(\Omega)$
with the notion of the weak derivative in the sense of $L_{\mathrm{loc}}^{1}$. 

If $m$, $k\in\N$ are such that $k<m$, then $\pi_{m}^{k}:=\pi_{m}\backslash\pi_{k}$.
More generally, if $\alpha\in\N^{d}$, then $\pi_{\alpha}$ is the
set of multi-indices $\beta\in\N^{d}$ such that $\beta\leq\alpha$
(i.e.~$\beta_{i}\leq\alpha_{i}$ for all $i$). If $\lambda\in\N^{d}$
is such that $\lambda<\alpha$ (i.e.~$\lambda\leq\alpha$ with $\lambda_{j}<\alpha_{j}$
for some $j$), then $\pi_{\alpha}^{\lambda}$ denotes the set $\pi_{\alpha}\backslash\pi_{\lambda}$.

The relation between the week derivative in the sense of $L_{\mathrm{loc}}^{1}$
and the weak derivative in the sense of $L_{v,\mathrm{loc}}^{1}(\Omega)$
is given in the next proposition.\textcolor{blue}{{} }
\begin{prop}
\label{prop:comparison=000020with=000020weak=000020derivative=000020}Let
$\Omega\subseteq\R^{d}$ be an open set, $\alpha\in\N^{d}$ with $|\alpha|\neq0$,
$p\in[1,\infty)$, $w\in E_{p}(\Omega)$, $v\in\mathcal{C}^{|\alpha|,*}(\Omega)$
 be a map satisfying (\ref{eq:weak=000020derivative=00002011}). 
\begin{enumerate}
\item \label{enu:=000020comp1}Let $f\in L_{\mathrm{loc}}^{1}(\Omega)$($=L_{w,\mathrm{loc}}^{1}(\Omega)$
with $w\equiv1$). If $\exists D^{\alpha}f\in L_{\mathrm{loc}}^{1}(\Omega)$
then $\exists D_{v}^{\alpha}f$ and $D^{\alpha}f=D_{v}^{\alpha}f$. 
\item \label{enu:comp2}Let $f\in L_{w,\mathrm{loc}}^{p}(\Omega)$. Then,
$f$ is $\alpha$-weakly differentiable in the sense of $L_{v,\mathrm{loc}}^{1}(\Omega)$
with $D_{v}^{\alpha}f\in L_{v^{|\alpha|+1},\mathrm{loc}}^{p}(\Omega)$
if and only if its $\alpha$-weak derivative in the sense of $L_{\mathrm{loc}}^{1}(\Omega_{|v|^{p|\alpha|+p}}^{*})$,
denoted by $D^{\alpha}f$, exists and belongs to $L_{v^{|\alpha|+1},\mathrm{loc}}^{p}(\Omega)$.
In particular, we have $D_{v}^{\alpha}f=D^{\alpha}f$ a.e. in $\Omega_{|v|^{p|\alpha|+p}}^{*}$. 
\end{enumerate}
\end{prop}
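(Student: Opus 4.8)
The plan is to dispatch part (i) directly and to reduce part (ii) to a comparison of the two defining integral identities. Part (i) is immediate: if $f,D^{\alpha}f\in L_{\mathrm{loc}}^{1}(\Omega)$, then by density of $\mathcal{D}(\Omega)$ in $\mathcal{C}_{c}^{|\alpha|}(\Omega)$ the usual weak–derivative identity $\int_{\Omega}f\,\partial^{\alpha}\psi\,\diff x=(-1)^{|\alpha|}\int_{\Omega}(D^{\alpha}f)\psi\,\diff x$ extends to all $\psi\in\mathcal{C}_{c}^{|\alpha|}(\Omega)$; taking $\psi=v^{|\alpha|+1}\phi$ (which lies in $\mathcal{C}_{c}^{|\alpha|}(\Omega)$ since $v\in\mathcal{C}^{|\alpha|}$) turns it into \eqref{eq:=000020alpha-Weak=000020=000020derivative} with $D_{v}^{\alpha}f:=D^{\alpha}f$, and $D^{\alpha}f\in L_{v^{|\alpha|+1},\mathrm{loc}}^{1}(\Omega)$ because $v^{|\alpha|+1}$ is locally bounded. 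For part (ii), write $\Omega^{*}:=\Omega_{|v|^{p|\alpha|+p}}^{*}=\Omega\setminus B$ with $B:=P(|v|^{p|\alpha|+p})$, and $Z:=\{v=0\}$; both $B$ and $Z$ are closed and null. From the hypothesis $f\in L_{w,\mathrm{loc}}^{p}(\Omega)$, \eqref{eq:weak=000020derivative=00002011} and local boundedness of $v$ one gets $f\in L_{v,\mathrm{loc}}^{p}(\Omega)\subseteq L_{v^{|\alpha|+1},\mathrm{loc}}^{p}(\Omega)$. For $K\Subset\Omega^{*}$ the very definition of $\Omega^{*}$ gives $|v|^{-(|\alpha|+1)}\in L^{p'}(K)$ (with $p'$ the conjugate exponent, as $(|\alpha|+1)p'=p(|\alpha|+1)/(p-1)$), so Hölder's inequality yields $\int_{K}|g|\,\diff x\le||gv^{|\alpha|+1}||_{L^{p}(K)}\,|||v|^{-(|\alpha|+1)}||_{L^{p'}(K)}<\infty$ for $g=f$ and for whichever of $D_{v}^{\alpha}f,D^{\alpha}f$ is assumed to exist; hence both lie in $L_{\mathrm{loc}}^{1}(\Omega^{*})$ and the $L_{\mathrm{loc}}^{1}(\Omega^{*})$–weak derivative is well posed. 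The same estimate with exponent $p$ in place of $p(|\alpha|+1)$ shows $P(|v|^{p})\subseteq B$, i.e.\ $\Omega^{*}\subseteq\Omega_{|v|^{p}}^{*}$, which is why this smaller set is the correct domain.

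Next I would identify the two derivatives on the open set $\Omega^{*}\setminus Z$, which already yields the ``in particular'' clause. There $v^{|\alpha|+1}$ is a non-vanishing $\mathcal{C}^{|\alpha|}$ function, so $\phi\mapsto v^{|\alpha|+1}\phi$ is a bijection of $\mathcal{C}_{c}^{|\alpha|}(\Omega^{*}\setminus Z)$ onto itself. Admitting $\mathcal{C}_{c}^{|\alpha|}$ test functions (Remark~\ref{rem:weak=000020alpha=000020derivative=000020}), the identity \eqref{eq:=000020alpha-Weak=000020=000020derivative} for $D_{v}^{\alpha}f$, after the substitution $\psi=v^{|\alpha|+1}\phi$ and discarding the null set $B$, becomes verbatim the identity defining the $L_{\mathrm{loc}}^{1}$–weak derivative of $f$ on $\Omega^{*}\setminus Z$. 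Thus on $\Omega^{*}\setminus Z$ the two notions exist simultaneously and agree, and since $Z$ is null this proves $D_{v}^{\alpha}f=D^{\alpha}f$ a.e.\ in $\Omega^{*}$.

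The decisive point, and the main obstacle, is to pass from test functions that vanish near the degeneracy to arbitrary ones: for ``$\Leftarrow$'' the function $\psi=v^{|\alpha|+1}\phi$ ($\phi\in\mathcal{D}(\Omega)$) is only $\mathcal{C}^{|\alpha|}$ and is not compactly supported in $\Omega^{*}$, while for ``$\Rightarrow$'' one must upgrade the identity from $\mathcal{D}(\Omega^{*}\setminus Z)$ to $\mathcal{D}(\Omega^{*})$. In both cases I would insert cut-offs $\chi_{n}\in\mathcal{C}^{|\alpha|}(\Omega)$, $0\le\chi_{n}\le1$, vanishing in a neighbourhood of the degeneracy set and increasing to $1$ on $\Omega^{*}$, apply the identity already available to $\chi_{n}\psi\in\mathcal{C}_{c}^{|\alpha|}$, and let $n\to\infty$. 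The terms carrying no derivative of $\chi_{n}$ converge by dominated convergence (using $f,D^{\alpha}f\in L_{\mathrm{loc}}^{1}(\Omega^{*})$ and $\chi_{n}\to1$ off a null set), so everything reduces to showing that the Leibniz remainders $\sum_{0\ne\beta\le\alpha}\binom{\alpha}{\beta}\int_{\Omega^{*}}f\,\partial^{\beta}\chi_{n}\,\partial^{\alpha-\beta}\psi\,\diff x$ tend to $0$.

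To control these remainders one Hölder-splits each summand as $||fv^{|\alpha|+1}||_{L^{p}(S_{n})}$ times $|||v|^{-(|\alpha|+1)}\,\partial^{\beta}\chi_{n}\,\partial^{\alpha-\beta}\psi||_{L^{p'}(S_{n})}$, where $S_{n}=\mathrm{supp}\,\partial^{\beta}\chi_{n}$ is a thin shell about the degeneracy set. The first factor tends to $0$ by absolute continuity of the integral, since $fv^{|\alpha|+1}\in L_{\mathrm{loc}}^{p}(\Omega)$ and $|S_{n}|\to0$. Everything therefore hinges on keeping the second factor bounded, which is precisely a weighted (higher-order) capacity statement: the integrability $|v|^{-(|\alpha|+1)p'}\in L_{\mathrm{loc}}^{1}(\Omega^{*})$ built into the definition of $\Omega^{*}$ forces the degeneracy set to be removable and permits cut-offs with $\sup_{n}|||v|^{-(|\alpha|+1)}\partial^{\beta}\chi_{n}||_{L^{p'}}<\infty$; in direction ``$\Leftarrow$'' this is further eased because $\partial^{\alpha-\beta}\psi$ retains at least $|\beta|+1\ge2$ factors of $v$. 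The exponent $p(|\alpha|+1)$ appearing in $\Omega_{|v|^{p|\alpha|+p}}^{*}$ is exactly the one balancing the gradient blow-up of the $\chi_{n}$ against the shell integrals, and constructing such cut-offs — especially controlling $\partial^{\beta}\chi_{n}$ for $|\beta|\ge2$ — is where the real work lies. Granting this, the limit delivers \eqref{eq:=000020alpha-Weak=000020=000020derivative} for $D_{v}^{\alpha}f$ (direction ``$\Leftarrow$'') and the $L_{\mathrm{loc}}^{1}(\Omega^{*})$–identity for $D^{\alpha}f$ (direction ``$\Rightarrow$''), with the two derivatives equal by the previous paragraph and finiteness of all integrals guaranteed by Remark~\ref{rem:weak=000020alpha=000020derivative=000020}; this closes the equivalence.
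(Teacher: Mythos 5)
Your part (i) is correct and is exactly the paper's argument, and your preliminary reductions in part (ii) (the chain $L_{w,\mathrm{loc}}^{p}\subseteq L_{v,\mathrm{loc}}^{p}\subseteq L_{v^{|\alpha|+1},\mathrm{loc}}^{p}\subseteq L_{\mathrm{loc}}^{1}(\Omega_{|v|^{p|\alpha|+p}}^{*})$, the identification of the two derivatives off the null set $\{v=0\}$, and the overall cutoff-plus-Leibniz strategy) are sound and structurally the same as the paper's. The gap is in the only step that carries the weight of the proof: you never show that the Leibniz remainders $\int_{\Omega}f\,\partial^{\beta}\chi_{n}\,\partial^{\alpha-\beta}\psi\,\diff x$ vanish. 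You reduce this to the claim that one can construct cutoffs with $\sup_{n}\||v|^{-(|\alpha|+1)}\partial^{\beta}\chi_{n}\|_{L^{p'}(S_{n})}<\infty$, call it a weighted capacity statement, and explicitly grant it. That claim does not follow from the definition of $\Omega_{|v|^{p|\alpha|+p}}^{*}$, and for cutoffs adapted to the geometry it fails: any $\chi_{n}$ transitioning from $0$ to $1$ across the shell $S_{n}=\{1/(2n)<|v|\le1/n\}$ has derivatives of size $|\partial^{\beta}\chi_{n}|\sim n^{|\beta|}\sim|v|^{-|\beta|}$ there, so your second H\"older factor is of size $\||v|^{-(|\alpha|+1+|\beta|)}\|_{L^{p'}(S_{n})}$, whereas the definition of $\Omega^{*}$ provides only $|v|^{-(|\alpha|+1)}\in L_{\mathrm{loc}}^{p'}(\Omega^{*})$ with no control on the rate at which the tail norms decay. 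Writing $a_{k}$ for the $L^{p'}$ norm of $|v|^{-(|\alpha|+1)}$ over the dyadic shell $S_{2^{k}}$ intersected with a fixed compact set, integrability gives only $a_{k}\to0$, and $2^{k|\beta|}a_{k}$ can perfectly well diverge (e.g. $a_{k}\sim k^{-2}$); so ``second factor bounded'' is not achievable from the hypotheses, and your forward direction collapses at exactly the point you flagged as ``where the real work lies.''

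The device that actually closes the argument, and which your proposal is missing, is the paper's choice of the cutoff as a function of $v$ itself: $\chi_{n}=\eta_{n}\circ v$ with $\eta_{n}(\cdot)=\eta(n\,\cdot)$, $\eta\equiv0$ on $[-1/2,1/2]$ and $\eta\equiv1$ off $[-1,1]$. Then $\partial^{\beta}\chi_{n}$ is supported in $S_{n}$, Fa\`a di Bruno gives $|\partial^{\beta}\chi_{n}|\le Cn^{|\beta|}$ there, and since $n\le|v|^{-1}$ on $M_{n}$ one gets the \emph{pointwise} bound $|v|^{|\alpha|}\,|\partial^{\beta}\chi_{n}|\le C$ uniformly in $n$. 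This replaces your uniform-norm requirement by a domination argument: the remainder integrand is bounded pointwise by the fixed function $C\,|v^{-|\alpha|}f\,\partial^{\alpha-\beta}\phi|$, which lies in $L^{1}$ by H\"older pairing $fv\in L_{\mathrm{loc}}^{p}$ against $|v|^{-(|\alpha|+1)}\in L_{\mathrm{loc}}^{p'}(\Omega^{*})$ (note the splitting is $fv$ times $|v|^{-(|\alpha|+1)}$, not $fv^{|\alpha|+1}$ times the rest, as in your version); since $\partial^{\beta}\chi_{n}\to0$ pointwise a.e., dominated convergence kills each remainder, and no uniform weighted $L^{p'}$ bound on the cutoffs is ever needed. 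In the reverse direction the same pointwise bound combined with the identity $\partial^{\alpha-\beta}(v^{|\alpha|+1}\phi)=v^{|\beta|+1}g_{\beta}$, $g_{\beta}\in\mathcal{C}_{c}^{|\beta|}(\Omega)$, yields the dominating function $C|fvg_{\beta}|\in L^{1}$. So the architecture of your proof matches the paper's, but the estimate you propose at its core is not the right one and the step you granted is, in substance, the entire proof.
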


\begin{proof}
\ref{enu:=000020comp1}: Recall that, by density of $\mathcal{D}(\Omega)$
in the space $\mathcal{C}_{c}^{|\alpha|}(\Omega)$, we can replace
the space $\mathcal{D}(\Omega)$ in the definition of the weak derivative
in the sense of $L_{\text{loc}}^{1}(\Omega)$ with the space $\mathcal{C}_{c}^{|\alpha|}(\Omega)$.
Hence, it suffices to use the definition of the weak derivative in
the sense of $L_{\text{loc}}^{1}(\Omega)$ with the test function
$\phi v^{|\alpha|+1}\in\mathcal{C}_{c}^{|\alpha|}(\Omega)$ instead
of $\phi\in\mathcal{D}(\Omega)$.

\ref{enu:comp2}: Assume that $f$ is $\alpha$-weakly differentiable
in the sense of $L_{v,\mathrm{loc}}^{1}(\Omega)$ with $D_{v}^{\alpha}f\in L_{v^{|\alpha|+1},\mathrm{loc}}^{p}(\Omega)$.
Let $\eta\in\mathcal{C}^{\infty}(\R)$ be such that 
\[
\eta\equiv1\,\,\,\text{on }\,\left[-1,1\right]^{c},\,\eta\equiv0\text{ on }\left[-\frac{1}{2},\frac{1}{2}\right],\,0\leq\eta\leq1,
\]
and for all $n\in\N$, set $\chi_{n}:=\eta_{n}\circ v$, where $\eta_{n}(\cdot):=\eta(n\cdot)$.
Setting 
\[
\forall n\in\N:\,M_{n}:=\left\{ x\in\Omega\mid|v(x)|\leq1/n\right\} 
\]
Clearly $\chi_{n}\in\mathcal{C}^{|\alpha|}(\Omega)$, and 
\begin{equation}
\forall n\in\N:\,\chi_{n}\equiv1\,\,\,\text{on }M_{n}^{c},\,\chi_{n}\equiv0\text{ on }M_{2n},\,\text{and }0\leq\chi_{n}\leq1\label{eq:chi_n}
\end{equation}
where $M_{n}^{c}$ is the complementary (within $\Omega$) of $M_{n}$.
For any $\phi\in\mathcal{D}(\Omega_{|v|^{p|\alpha|+p}}^{*})$, we
are allowed to use (\ref{eq:=000020alpha-Weak=000020=000020derivative})
with $\chi_{n}\phi v^{-|\alpha|-1}\in\mathcal{C}_{c}^{|\alpha|}(\Omega_{|v|^{p|\alpha|+p}}^{*})$
(see \ref{enu:=000020test=000020functions=000020} of Rem.~\ref{rem:weak=000020alpha=000020derivative=000020}).
Thus, we obtain 
\[
\sum_{0\leq\beta\leq\alpha}{\alpha \choose \beta}\int_{\Omega}f\partial^{\beta}\chi_{n}\partial^{\alpha-\beta}\phi\,\diff x=(-1)^{|\alpha|}\int_{\Omega}\chi_{n}\phi D_{v}^{\alpha}f\,\diff x.
\]
By the continuous inclusions $L_{w,\mathrm{loc}}^{p}(\Omega)\subseteq L_{v,\mathrm{loc}}^{p}(\Omega)\subseteq L_{v^{|\alpha|+1},\mathrm{loc}}^{p}(\Omega)\subseteq L_{\mathrm{loc}}^{1}(\Omega_{v^{p|\alpha|+p}}^{*})$
(the first inclusion follows from (\ref{eq:weak=000020derivative=00002011}),
and the latter inclusion follows from Thm.~$1.5$ of \cite{Kuf})
$f$, $D_{v}^{\alpha}f\in L_{\mathrm{loc}}^{1}(\Omega_{|v|^{p|\alpha|+p}}^{*})$.
Hence, by the dominated convergence theorem, we have that 
\[
\int_{\Omega}\chi_{n}\phi D_{v}^{\alpha}f\,\diff x.\ra\int_{\Omega}\phi D_{v}^{\alpha}f\,\diff x\,\,\,\text{and}\,\,\,\int_{\Omega}f\chi_{n}\partial^{\alpha}\phi\,\diff x\ra\int_{\Omega}f\partial^{\alpha}\phi\,\diff x.
\]
Left to show that 
\begin{equation}
\forall\beta\in\pi_{\alpha}^{0}:\,\int_{\Omega}f\partial^{\beta}(\chi_{n})\partial^{\alpha-\beta}\phi\,\diff x\ra0.\label{eq:2.5555}
\end{equation}
Fix $\beta\in\pi_{\alpha}^{0}$. Note that $\partial^{\beta}\chi_{n}\to0$
pointwise a.e. in $\Omega$. On the other hand, the Faà di Bruno formula
(\cite{wiki}) gives 
\[
\partial^{\beta}\chi_{n}(x)=\sum_{k=1}^{|\beta|}n^{k}|\{\pi\in\Pi\,\big|\,|\pi|=k\}|\eta^{(k)}(nv(x))\sum_{\pi\in\Pi,|\pi|=k}\prod_{B\in\pi}\frac{\partial^{|B|}v}{\prod_{j\in B}\partial x_{j}}
\]
where 
\begin{itemize}
\item $\pi$ runs through the set $\Pi$ of all partitions of the set $\mathcal{A}$
of non-null indices of $\beta$ with multiplicity e.g.~if $\beta=(2,3,0,1)$
then $\mathcal{A}:=\{1,1,2,2,2,4\}$
\item ``$B\in\pi"$ means the variable $B$ runs through the list of all
of the \textquotedbl blocks\textquotedbl{} of the partition $\pi$,
and 
\item $|A|$ denotes the cardinality of the set $A$ (so that $|\pi|$ is
the number of blocks in the partition $\pi$ and $|B|$ is the size
of the block $B$).
\end{itemize}
By construction, $\partial^{\beta}\chi_{n}(x)=0$ for all $x\in M_{n}^{c}\cup M_{2n}$.
Let now $x\in M_{n}\cap M_{2n}^{c}$. Since $v\in\mathcal{C}^{|\alpha|}(\Omega)$,
we have that 
\[
\exists C>0\,\forall n\in\N\,\forall x\in\text{supp}(\phi)\cap M_{n}\cap M_{2n}^{c}:\,|\partial^{\beta}\chi_{n}(x)|\leq Cn^{|\beta|},
\]
and hence 
\[
\exists C>0\,\forall n\in\N\,\forall x\in\text{supp}(\phi)\cap M_{n}\cap M_{2n}^{c}:\,|v^{|\alpha|}\partial^{\beta}\chi_{n}(x)|\leq Cn^{|\beta|-|\alpha|}\leq C.
\]
It follows that 
\[
\exists C>0\,\forall n\in\N\,:\,\left|f\partial^{\beta}(\chi_{n})\partial^{\alpha-\beta}\phi\right|\leq C\left|v^{-|\alpha|}f\partial^{\alpha-\beta}\phi\right|\,\,\,\text{a.e. in }\Omega.
\]
Since $f\in L_{v,\mathrm{loc}}^{p}(\Omega)$ (by (\ref{eq:weak=000020derivative=00002011}))
and $v^{-|\alpha|-1}\in L_{\mathrm{loc}}^{\frac{p}{p-1}}(\Omega_{|v|^{p|\alpha|+p}}^{*})$
(which follows from the definition of the set $\Omega_{|v|^{p|\alpha|+p}}^{*}$),
Hölder's inequality implies that $v^{-|\alpha|}f\partial^{\alpha-\beta}\phi\in L^{1}(\Omega)$.
Therefore, (\ref{eq:2.5555}) is now a consequence of the dominated
convergence theorem. 

Assume now that $f$ is $\alpha$-weakly differentiable in the sense
of $L_{\mathrm{loc}}^{1}(\Omega_{|v|^{p|\alpha|+p}}^{*})$ with $D^{\alpha}f\in L_{v^{|\alpha|+1},\mathrm{loc}}^{p}(\Omega)$.
Let $\chi_{n}$ be defined as above. Since $\chi_{n}v^{|\alpha|+1}\phi\in\mathcal{C}_{c}^{|\alpha|}(\Omega_{v^{p|\alpha|+p}}^{*})$,
we have 
\begin{multline}
\sum_{0\leq\beta\leq\alpha}{\alpha \choose \beta}\int_{\Omega}f\partial^{\beta}(\chi_{n})\partial^{\alpha-\beta}(v^{|\alpha|+1}\phi)\,\diff x=\\
\int_{\Omega}f\partial^{\alpha}(v^{|\alpha|+1}\chi_{n}\phi)\,\diff x=(-1)^{|\alpha|}\int_{\Omega}v^{|\alpha|+1}\chi_{n}\phi D^{\alpha}f\,\diff x.\label{eq:2.11}
\end{multline}
One can easily show by induction that 
\begin{equation}
\forall\beta\in\pi_{\alpha}\,\exists g_{\beta}\in\mathcal{C}_{c}^{|\beta|}(\Omega):\,\partial^{\alpha-\beta}(v^{|\alpha|+1}\phi)=v^{|\beta|+1}g_{\beta}.\label{eq:induction}
\end{equation}
As shown above, it is easy to see that 
\[
\forall\beta\in\pi_{\alpha}\,\exists C>0\,\forall n\in\N:\,\left|f\partial^{\alpha-\beta}(v^{|\alpha|+1}\phi)\partial^{\beta}\chi_{n}\right|\leq C|fvg_{\beta}|
\]
By the dominated convergence theorem (taking the limit in (\ref{eq:2.11}))
we get 
\[
\int_{\Omega}f\partial^{\alpha}(v^{|\alpha|+1}\phi)\,\diff x=(-1)^{|\alpha|}\int_{\Omega}v^{|\alpha|+1}\phi D^{\alpha}f\,\diff x,
\]
which completes the proof. 
\end{proof}

\section{\protect\label{sec:The-space=000020W=000020m=000020p=000020}The
weighted Sobolev space $W_{V,v}^{m,p}(\Omega)$}

If $V:=\{w_{\alpha}\}_{\alpha\in\pi_{m}}$ is a collection of elements
of $E_{p}(\Omega)$, then the function $w_{0}$ ($0\in\N^{d}$) will
be simply denoted by $w$. 
\begin{defn}
\label{def:generalized=000020weighted=000020sobolev=000020space=000020}Let
$\Omega\subseteq\R^{d}$ be an open set, $p\in[1,\infty)$, $m\in\N$,
and $V:=\{w_{\alpha}\}_{\alpha\in\pi_{m}}$ be a collection of elements
of $E_{p}(\Omega)$, $v\in\mathcal{C}^{m,*}(\Omega)$ be a map satisfying
(\ref{eq:weak=000020derivative=00002011}). We set 
\[
W_{V,v}^{m,p}(\Omega):=\{f\in L_{w}^{p}(\Omega)\mid\forall\alpha\in\pi_{m}^{0}:\,\exists D_{v}^{\alpha}f\in L_{w_{\alpha}}^{p}(\Omega)\}.
\]
This space is equipped with the norm 
\[
\forall f\in W_{V,v}^{m,p}(\Omega):\,||f||_{W_{V,v}^{m,p}}:=\left(\sum_{|\alpha|\leq m}||D_{v}^{\alpha}f||_{L_{w_{\alpha}}^{p}}^{p}\right)^{1/p}.
\]
When $p=2$, the space $W_{V,v}^{m,2}(\Omega)$ will be denoted by
$H_{V,v}^{m}(\Omega)$ and its norm is generated by the scalar product
\[
\forall f,g\in H_{V,v}^{m}(\Omega):\,(f,g)_{H_{V,v}^{m}}:=\sum_{|\alpha|\leq m}(D_{v}^{\alpha}f,D_{v}^{\alpha}g)_{L_{w_{\alpha}}^{2}}.
\]
\textcolor{blue}{}
\end{defn}

We now prove that the space $W_{V,v}^{m,p}(\Omega)$ is Cauchy complete. 
\begin{thm}
\label{thm:W^m,p=000020complete=000020}Let $\Omega\subseteq\R^{d}$
be an open set, $p\in[1,\infty)$, $m\in\N$, and let $V:=\{w_{\alpha}\}_{\alpha\in\pi_{m}}$
be a collection of elements of $E_{p}(\Omega)$, and $v\in\mathcal{C}^{m,*}(\Omega)$.
Assume that 
\begin{equation}
\forall\alpha\in\pi_{m}\,\forall K\Subset\Omega\,\exists C_{K}:\,|v(x)|^{|\alpha|+1}\leq C_{K}|w_{\alpha}(x)|\,\,\,\text{a.e. in }K.\label{eq:=000020w^=00005Calpha<=000020w_=00005Calpha}
\end{equation}
Then, the space $W_{V,v}^{m,p}(\Omega)$ is Cauchy complete. 
\end{thm}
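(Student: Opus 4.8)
The plan is to reduce completeness of $W_{V,v}^{m,p}(\Omega)$ to completeness of the weighted Lebesgue spaces $L_{w_\alpha}^{p}(\Omega)$ (Theorem \ref{thm:HBT}) and then to verify that the componentwise limits assemble into a single function whose weak derivatives (in the sense of $L_{v,\text{loc}}^{1}(\Omega)$) are exactly those limits. First I note that applying (\ref{eq:=000020w^=00005Calpha<=000020w_=00005Calpha}) with $\alpha=0$ gives $|v|\leq C_{K}|w|$ a.e.\ on each $K\Subset\Omega$, i.e.\ condition (\ref{eq:weak=000020derivative=00002011}) holds, so Definition \ref{def:=000020Weak=000020derivative=000020P-alpha=000020} and all the conclusions of Remark \ref{rem:weak=000020alpha=000020derivative=000020} are available here. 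Now I would start from a Cauchy sequence $(f_{n})_{n}$ in $W_{V,v}^{m,p}(\Omega)$. By the definition of the norm, for each $\alpha\in\pi_{m}$ the sequence $(D_{v}^{\alpha}f_{n})_{n}$ is Cauchy in $L_{w_{\alpha}}^{p}(\Omega)$ (with $D_{v}^{0}f_{n}:=f_{n}$). Since each $L_{w_{\alpha}}^{p}(\Omega)$ is Banach, there are limits $f_{\alpha}\in L_{w_{\alpha}}^{p}(\Omega)$ with $D_{v}^{\alpha}f_{n}\to f_{\alpha}$; set $f:=f_{0}\in L_{w}^{p}(\Omega)$. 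The goal is to show $f\in W_{V,v}^{m,p}(\Omega)$ with $D_{v}^{\alpha}f=f_{\alpha}$ for every $\alpha\in\pi_{m}^{0}$, after which the finiteness of $\pi_{m}$ yields $\|f_{n}-f\|_{W_{V,v}^{m,p}}\to 0$.

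Before identifying the derivatives I would check the integrability prerequisites of Definition \ref{def:=000020Weak=000020derivative=000020P-alpha=000020}. A Hölder estimate on any $K\Subset\Omega$ gives $L_{w}^{p}(\Omega)\subseteq L_{w,\text{loc}}^{1}(\Omega)$, so $f\in L_{w,\text{loc}}^{1}(\Omega)$; likewise (\ref{eq:=000020w^=00005Calpha<=000020w_=00005Calpha}) yields the local inclusion $L_{w_{\alpha}}^{p}(K)\subseteq L_{v^{|\alpha|+1}}^{1}(K)$, so each candidate $f_{\alpha}$ lies in $L_{v^{|\alpha|+1},\text{loc}}^{1}(\Omega)$, as a weak derivative must.

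The heart of the proof is the passage to the limit in the defining identity (\ref{eq:=000020alpha-Weak=000020=000020derivative}) for $f_{n}$,
\[
\int_{\Omega}f_{n}\,\partial^{\alpha}(v^{|\alpha|+1}\phi)\,\diff x=(-1)^{|\alpha|}\int_{\Omega}v^{|\alpha|+1}\phi\, D_{v}^{\alpha}f_{n}\,\diff x,\qquad\phi\in\mathcal{D}(\Omega).
\]
For the left-hand side I would invoke item \ref{enu:=000020continuity=000020weak=000020derivative=000020} of Remark \ref{rem:weak=000020alpha=000020derivative=000020}: for fixed $\phi$ the functional $g\mapsto\int_{\Omega}g\,\partial^{\alpha}(v^{|\alpha|+1}\phi)\,\diff x$ is continuous on $L_{w}^{p}(\Omega)$, so $f_{n}\to f$ in $L_{w}^{p}(\Omega)$ forces convergence to $\int_{\Omega}f\,\partial^{\alpha}(v^{|\alpha|+1}\phi)\,\diff x$. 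For the right-hand side, with $K:=\text{supp}(\phi)$, I would use (\ref{eq:=000020w^=00005Calpha<=000020w_=00005Calpha}) to bound $|v|^{|\alpha|+1}\leq C_{K}|w_{\alpha}|$ a.e.\ on $K$ and then Hölder's inequality to get
\[
\left|\int_{\Omega}v^{|\alpha|+1}\phi\,(D_{v}^{\alpha}f_{n}-f_{\alpha})\,\diff x\right|\leq C_{K}\,\|\phi\|_{L^{p'}(K)}\,\|D_{v}^{\alpha}f_{n}-f_{\alpha}\|_{L_{w_{\alpha}}^{p}(\Omega)}\longrightarrow 0.
\]
Hence the right-hand side converges to $(-1)^{|\alpha|}\int_{\Omega}v^{|\alpha|+1}\phi\, f_{\alpha}\,\diff x$, so (\ref{eq:=000020alpha-Weak=000020=000020derivative}) holds for $f$ with $D_{v}^{\alpha}f=f_{\alpha}$. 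This places $f$ in $W_{V,v}^{m,p}(\Omega)$ and gives $f_{n}\to f$ in norm.

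The main obstacle is exactly this limit on the right-hand side: the $f_{\alpha}$ are produced only as limits in the weighted norms $L_{w_{\alpha}}^{p}$, whereas the weak formulation pairs them against the weight $v^{|\alpha|+1}$. Assumption (\ref{eq:=000020w^=00005Calpha<=000020w_=00005Calpha}) is precisely the compatibility condition dominating $v^{|\alpha|+1}$ by $w_{\alpha}$ on compacta, which is what transfers $L_{w_{\alpha}}^{p}$-convergence to convergence of the pairing against $v^{|\alpha|+1}\phi$. Everything else — completeness of the factor spaces and continuity of the left-hand functional — is already supplied by Theorem \ref{thm:HBT} and Remark \ref{rem:weak=000020alpha=000020derivative=000020}.
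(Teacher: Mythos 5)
Your proposal is correct and follows essentially the same route as the paper's own proof: reduce to completeness of each $L_{w_{\alpha}}^{p}(\Omega)$ via Theorem \ref{thm:HBT}, then pass to the limit in the identity (\ref{eq:=000020alpha-Weak=000020=000020derivative}), using item \ref{enu:=000020continuity=000020weak=000020derivative=000020} of Remark \ref{rem:weak=000020alpha=000020derivative=000020} for the left-hand side and the domination (\ref{eq:=000020w^=00005Calpha<=000020w_=00005Calpha}) plus H\"older for the right-hand side. Your explicit verification that (\ref{eq:=000020w^=00005Calpha<=000020w_=00005Calpha}) with $\alpha=0$ yields (\ref{eq:weak=000020derivative=00002011}), and that each $f_{\alpha}$ lies in $L_{v^{|\alpha|+1},\mathrm{loc}}^{1}(\Omega)$, is a welcome tightening of details the paper only states in passing.
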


\begin{proof}
Let $(f_{n})$ be a Cauchy sequence of $W_{V,v}^{m,p}(\Omega)$. By
definition, for all $\alpha\in\pi_{m}$, the sequence $(D_{v}^{\alpha}f_{n})$
is a Cauchy sequence of $L_{w_{\alpha}}^{p}(\Omega)$. By completeness
of $L_{w_{\alpha}}^{p}(\Omega)$ (see Thm.~\ref{thm:HBT}) there
exists a unique $f_{\alpha}\in L_{w_{\alpha}}^{p}(\Omega)$ limit
of the sequence $(D_{v}^{\alpha}f_{n})$ in $L_{w_{\alpha}}^{p}(\Omega)$.
Set $f:=f_{0}$. We shall prove that $f$ is weakly differentiable
in the sense of $L_{v,\text{loc}}^{1}(\Omega)$ with $D_{v}^{\alpha}f=f_{\alpha}$
for all $\alpha\in\pi_{m}^{0}$. Indeed, by assumption we have 
\begin{equation}
\forall\phi\in\mathcal{D}(\Omega):\,\int_{\Omega}f_{n}\partial^{\alpha}\left(v^{|\alpha|+1}\phi\right)\,\diff x=(-1)^{|\alpha|}\int_{\Omega}\phi v^{|\alpha|+1}D_{v}^{\alpha}f_{n}\,\diff x.\label{eq:0.4}
\end{equation}
We have $\lim_{n\to\infty}\int_{\Omega}f_{n}\partial^{\alpha}\left(v^{|\alpha|+1}\phi\right)\,\diff x=\int_{\Omega}f\partial^{\alpha}\left(v^{|\alpha|+1}\phi\right)\,\diff x$
by \ref{enu:=000020continuity=000020weak=000020derivative=000020}
of Rem.~\ref{rem:weak=000020alpha=000020derivative=000020}. On the
other hand, (\ref{eq:=000020w^=00005Calpha<=000020w_=00005Calpha})
implies that 
\[
\left|\int_{\Omega}(D_{v}^{\alpha}f_{n})v^{|\alpha|+1}\phi\,\diff x\right|\leq C_{\text{supp}(\phi)}\int_{\Omega}\left|(D_{v}^{\alpha}f_{n})w_{\alpha}\phi\right|\,\diff x\le C_{\text{supp}(\phi)}||D_{v}^{\alpha}f_{n}||_{L_{w_{\alpha}}^{p}}||\phi||_{L^{q}}
\]
where $q$ is the conjugate exponent of $p$, which implies that the
right hand side of (\ref{eq:0.4}) converges to $(-1)^{|\alpha|}\int_{\Omega}\phi v^{|\alpha|+1}f_{\alpha}\,\diff x$
when $n\to\infty$. It follows that 
\[
\forall\phi\in\mathcal{D}(\Omega):\,\int_{\Omega}f\partial^{\alpha}\left(v^{|\alpha|+1}\phi\right)\,\diff x=(-1)^{|\alpha|}\int_{\Omega}\phi v^{|\alpha|+1}f_{\alpha}\,\diff x
\]
which proves that $\exists D_{v}^{\alpha}f=f_{\alpha}\in L_{w_{\alpha}}^{p}(\Omega)\subseteq L_{v^{|\alpha|+1},\text{loc}}^{1}(\Omega)$
(where the last inclusion follows from (\ref{eq:=000020w^=00005Calpha<=000020w_=00005Calpha})),
and hence $f\in W_{V,v}^{m,p}(\Omega)$. Therefore, the sequence $(f_{n})$
converges to $f$ in $W_{V,v}^{m,p}(\Omega)$, and the proof is completed. 
\end{proof}
\begin{rem}
\label{rem:w_=00005Calpha=00003Dw^i} If $w_{\alpha}=|v|^{\sigma}$
for some $\sigma\in(0,|\alpha|+1)$ then (\ref{eq:=000020w^=00005Calpha<=000020w_=00005Calpha})
holds. This is because $|v|^{|\alpha|+1-\sigma}$ is bounded on any
compact subset of $\Omega$. 
\end{rem}

We now compare the new approach of weighted Sobolev spaces presented
in this paper with the classical one. 

Let $V:=\{w_{\alpha}\}_{\alpha\in\pi_{m}}$ be a collection of elements
of $E_{p}(\Omega)$, and let $S:=\{u_{\alpha}\}_{\alpha\in\pi_{m}}$
where $u_{\alpha}=|w_{\alpha}|^{p}\in W(\Omega)$. We recall that
the weighted Sobolev space $W^{m,p}(\Omega,S)$ is defined as the
space of all functions $f\in L^{p}(\Omega,u)\cap L_{\mathrm{loc}}^{1}(\Omega)$
such that their distributional derivative are elements of $L^{p}(\Omega,u_{\alpha})\cap L_{\mathrm{loc}}^{1}(\Omega)$.
The space $W^{m,p}(\Omega,S)$ is equipped with the norm 
\[
\forall f\in W^{m,p}(\Omega,S):\,||f||_{W^{m,p}(\Omega,S)}:=\left(\sum_{|\alpha|\leq m}||u_{\alpha}^{1/p}D^{\alpha}f||_{L^{p}}^{p}\right)^{1/p}.
\]
In order to guarantee the Cauchy completeness of $W^{m,p}(\Omega,S)$
we assume that 
\begin{equation}
\forall\alpha\in\pi_{m}:\,u_{\alpha}^{-1}\in L_{\mathrm{loc}}^{\frac{1}{p-1}}(\Omega)\label{eq:B_p}
\end{equation}
(see Thm.~$1.11$ of \cite{Kuf}). However, when (\ref{eq:B_p})
is violated, the weighted Sobolev space $W^{m,p}(\Omega,S)$ is not
necessarily a Banach space (see Example 1.12 of \cite{Kuf}). Indeed,
the ``bad'' set which may cause the non-completeness of $W^{m,p}(\Omega,S)$
is the set $B:=\cup_{\alpha\in\pi_{m}}P(u_{\alpha})$ where $P(u_{\alpha})$
is the set of points $x\in\Omega$ for which $u_{\alpha}^{-1}\not\in L^{\frac{1}{p-1}}(\Omega\cap U)$
for all neighborhood $U$ of $x$. The existing remedy is to define
the space $W^{m,p}(\Omega,S)$ as the space $W^{m,p}(\Omega_{S}^{*},S)$
where $\Omega_{S}^{*}:=\Omega\backslash B$. It is shown that the
set $B$ is closed (and hence $\Omega_{S}^{*}$ is open) and Lebesgue
negligible. Moreover, assumption (\ref{eq:B_p}) holds if we replace
$\Omega$ with $\Omega_{S}^{*}$. Therefore, the weighted Sobolev
space $W^{m,p}(\Omega,S)$($=W^{m,p}(\Omega_{S}^{*},S)$) is now Cauchy
complete (see Sec.~3 of \cite{Kuf} for more details). However, the
space of test functions used in the definition of the distributional
derivative is, in general, limited to the space $\mathcal{C}_{c}^{m}(\Omega_{S}^{*})$. 

On the other hand, if there exists a $\mathcal{C}^{m,*}(\Omega)$
function $v$ that satisfies (\ref{eq:=000020w^=00005Calpha<=000020w_=00005Calpha}),
then, by assertion \ref{enu:comp2} of Prop.~\ref{prop:comparison=000020with=000020weak=000020derivative=000020},
we have that 
\begin{equation}
W^{m,p}(\Omega_{S}^{*},S)\subseteq W^{m,p}(\Omega_{|v|^{pm+p}}^{*},S)=W_{V,v}^{m,p}(\Omega).\label{eq:2.13}
\end{equation}
In case $\Omega_{S}^{*}=\Omega_{v^{pm+p}}^{*}$ e.g.~$w_{\alpha}:=v^{|\alpha|+1}$,
the inclusion in (\ref{eq:2.13}) becomes an equality. 

In conclusion, we have the following remark 
\begin{rem}
For a given collection $S:=\{|w_{\alpha}|^{p}\}_{\alpha\in\pi_{m}}$
of arbitrarily small weight functions, the weighted Sobolev space
$W^{m,p}(\Omega_{S}^{*},S)$ is a Banach space while the space $W^{m,p}(\Omega,S)$
in general is not. The price we pay is a modification of the domain
and hence the convexity property might be lost and the boundary of
$\Omega$ is modified. If there exists a $\mathcal{C}^{m,*}(\Omega)$
function $v$ satisfying (\ref{eq:=000020w^=00005Calpha<=000020w_=00005Calpha}),
then we can consider the space $W_{V,v}^{m,p}(\Omega)$ (where $V:=\{w_{\alpha}\}_{\alpha\in\pi_{m}}$)
instead of $W^{m,p}(\Omega_{S}^{*},S)$, which is in general larger.
In this case, no modification of the domain is needed and elements
of $W_{V,v}^{m,p}(\Omega)$ are weakly differentiable in the sense
of $L_{\mathrm{loc}}^{1}(\Omega_{|v|^{mp+p}}^{*})$ and in the sense
of $L_{v,\mathrm{loc}}^{1}(\Omega)$ but not necessary in the sense
of $L_{\mathrm{loc}}^{1}(\Omega_{S}^{*})$. If furthermore, the function
$v$ is well chosen so that $\Omega_{S}^{*}=\Omega_{|v|^{pm+p}}^{*}$,
then $W^{m,p}(\Omega_{S}^{*},S)=W_{V,v}^{m,p}(\Omega).$ 
\end{rem}

Let $m\in\N$ and let $s:\pi_{m}\ra\R_{\geq0}$ be a map having the
following properties 
\begin{equation}
\forall\alpha\in\pi_{m}\,\forall\beta\in\pi_{\alpha}:\,s(\alpha)\leq|\alpha|,\,\,\,\text{and}\,\,\,s(\alpha-\beta)+s(\beta)\leq s(\alpha)\label{eq:=000020the=000020map=000020s}
\end{equation}
A typical example of $s$ is the map $|\cdot|$. Let $v\in\mathcal{C}^{m,*}(\Omega)$,
and consider the collection $V=\{w_{\alpha}\}_{\alpha\in\pi_{m}}$
of elements of $E_{p}(\Omega)$ defined by 
\[
\forall\alpha\in\pi_{m}:w_{\alpha}:=|v|^{s(\alpha)+1}.
\]
Then, clearly (\ref{eq:=000020w^=00005Calpha<=000020w_=00005Calpha})
holds since $|\alpha|-s(\alpha)\geq0$. It follows that the space
$W_{s,v}^{m,p}(\Omega):=W_{V,v}^{m,p}(\Omega)$ is a Banach space
(see Thm.~\ref{thm:W^m,p=000020complete=000020}). 
\begin{prop}
\label{prop:continuous=000020embedding=000020W^mp}Let $\Omega\subseteq\R^{d}$
be an open set, $p\in[1,\infty)$, $m\in\N$, $v\in\mathcal{C}^{m,*}(\Omega)$,
$s:\pi_{m}\ra\R_{\geq0}$ be a map satisfying (\ref{eq:=000020the=000020map=000020s}).
Then, the operator $W_{s,v}^{m,p}(\Omega)\ra W_{\mathrm{loc}}^{m,p}(\Omega)$,
$f\ra v^{m+1}f$ is continuous. Moreover 
\begin{equation}
\forall f\in W_{s,v}^{m,p}(\Omega)\,\forall\alpha\in\pi_{m}:\,D^{\alpha}(v^{m+1}f)=\sum_{0\leq\beta\leq\alpha}{\alpha \choose \beta}D_{v}^{\beta}f\partial^{\alpha-\beta}v^{m+1}.\label{eq:Leibniz=000020form}
\end{equation}
In particular, one can replace $W_{\mathrm{loc}}^{m,p}(\Omega)$ with
$W^{m,p}(\Omega)$ in case $v\in\mathcal{C}_{b}^{m}(\Omega)$. 
\end{prop}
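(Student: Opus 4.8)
The plan is to exhibit the right-hand side of (\ref{eq:Leibniz=000020form}) as the distributional derivative of $v^{m+1}f$, verifying the identity first on the good open set $\Omega^{*}:=\Omega^{*}_{|v|^{pm+p}}$, where the classical calculus of weak derivatives applies, and then promoting it to all of $\Omega$ by the same cut-off scheme used in the proof of Prop.~\ref{prop:comparison=000020with=000020weak=000020derivative=000020}. Write $G_{\alpha}:=\sum_{0\le\beta\le\alpha}\binom{\alpha}{\beta}D_{v}^{\beta}f\,\partial^{\alpha-\beta}(v^{m+1})$ for the candidate. Since $s(0)=0$ by (\ref{eq:=000020the=000020map=000020s}) we have $w=w_{0}=|v|$ and $D_{v}^{\beta}f\in L^{p}_{w_{\beta}}(\Omega)=L^{p}_{|v|^{s(\beta)+1}}(\Omega)$. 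Because $v\in\mathcal{C}^{m}(\Omega)$, every term arising when $\partial^{\alpha-\beta}$ is applied to $v^{m+1}$ carries a power $v^{j}$ with $j\ge m+1-|\alpha|+|\beta|\ge s(\beta)+1$ (using $|\alpha|\le m$ and $s(\beta)\le|\beta|$), so $\partial^{\alpha-\beta}(v^{m+1})/|v|^{s(\beta)+1}$ is bounded on every compact set. Splitting each summand of $G_{\alpha}$ as $\big(D_{v}^{\beta}f\,|v|^{s(\beta)+1}\big)\cdot\big(\partial^{\alpha-\beta}(v^{m+1})/|v|^{s(\beta)+1}\big)$ then shows $G_{\alpha}\in L^{p}_{\mathrm{loc}}(\Omega)$ together with the bound $||G_{\alpha}||_{L^{p}(K)}\le C_{K}||f||_{W^{m,p}_{s,v}}$; likewise $v^{m+1}f\in L^{p}_{\mathrm{loc}}(\Omega)$.

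Next I would work on $\Omega^{*}$. As $s(\beta)\le m$, we have $\Omega^{*}=\Omega^{*}_{|v|^{pm+p}}\subseteq\Omega^{*}_{|v|^{p|\beta|+p}}$ for all $\beta\in\pi_{m}$, so assertion \ref{enu:comp2} of Prop.~\ref{prop:comparison=000020with=000020weak=000020derivative=000020} (whose hypothesis $D_{v}^{\beta}f\in L^{p}_{v^{|\beta|+1},\mathrm{loc}}(\Omega)$ holds because $|\beta|-s(\beta)\ge0$) yields that the ordinary weak derivatives $D^{\beta}f$ exist on $\Omega^{*}$ and coincide a.e.\ there with $D_{v}^{\beta}f$; in particular $f$ has weak derivatives of all orders $\le m$ in $L^{1}_{\mathrm{loc}}(\Omega^{*})$. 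Since $v^{m+1}\in\mathcal{C}^{m}(\Omega^{*})$, the classical product rule for weak derivatives gives that $v^{m+1}f$ is weakly differentiable on $\Omega^{*}$ with $D^{\alpha}(v^{m+1}f)=\sum_{\beta\le\alpha}\binom{\alpha}{\beta}D^{\beta}f\,\partial^{\alpha-\beta}(v^{m+1})=G_{\alpha}$ a.e.\ on $\Omega^{*}$. Consequently the identity
\[
\int_{\Omega}v^{m+1}f\,\partial^{\alpha}\phi\,\diff x=(-1)^{|\alpha|}\int_{\Omega}G_{\alpha}\phi\,\diff x
\]
holds for every $\phi\in\mathcal{C}^{m}_{c}(\Omega^{*})$, by density of $\mathcal{D}(\Omega^{*})$ in $\mathcal{C}^{m}_{c}(\Omega^{*})$.

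The decisive step is to remove the restriction $\mathrm{supp}\,\phi\subseteq\Omega^{*}$. Let $\chi_{n}:=\eta_{n}\circ v$ be the cut-offs from the proof of Prop.~\ref{prop:comparison=000020with=000020weak=000020derivative=000020}, so that $\mathrm{supp}(\chi_{n})\subseteq\{|v|\ge 1/(2n)\}\subseteq\Omega^{*}$ (at such points $|v|$ is locally bounded below, so $|v|^{-(pm+p)/(p-1)}$ is locally integrable), $0\le\chi_{n}\le1$, and $\chi_{n}\to1$ a.e. For $\phi\in\mathcal{D}(\Omega)$ the function $\chi_{n}\phi$ lies in $\mathcal{C}^{m}_{c}(\Omega^{*})$, so the displayed identity applies to it; expanding $\partial^{\alpha}(\chi_{n}\phi)$ by the Leibniz rule gives
\[
\int_{\Omega}v^{m+1}f\,\chi_{n}\,\partial^{\alpha}\phi\,\diff x+\sum_{0<\beta\le\alpha}\binom{\alpha}{\beta}\int_{\Omega}v^{m+1}f\,\partial^{\beta}\chi_{n}\,\partial^{\alpha-\beta}\phi\,\diff x=(-1)^{|\alpha|}\int_{\Omega}G_{\alpha}\chi_{n}\phi\,\diff x.
\]
As $n\to\infty$ the outer terms converge to $\int_{\Omega}v^{m+1}f\,\partial^{\alpha}\phi\,\diff x$ and $(-1)^{|\alpha|}\int_{\Omega}G_{\alpha}\phi\,\diff x$ by dominated convergence (both $v^{m+1}f$ and $G_{\alpha}$ lie in $L^{p}_{\mathrm{loc}}$), while each cross term vanishes exactly as (\ref{eq:2.5555}) does: on $M_{n}\cap M_{2n}^{c}$ one has $|v^{m}\partial^{\beta}\chi_{n}|\le Cn^{|\beta|-m}\le C$, so $|v^{m+1}f\,\partial^{\beta}\chi_{n}|\le C|vf|\in L^{1}_{\mathrm{loc}}$ while $\partial^{\beta}\chi_{n}\to0$ a.e. This proves (\ref{eq:Leibniz=000020form}) in $\mathcal{D}'(\Omega)$, whence $v^{m+1}f\in W^{m,p}_{\mathrm{loc}}(\Omega)$, and the estimate of the first step yields continuity of $f\mapsto v^{m+1}f$. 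When $v\in\mathcal{C}^{m}_{b}(\Omega)$ the factors $\partial^{\alpha-\beta}(v^{m+1})/|v|^{s(\beta)+1}$ are bounded on all of $\Omega$, so the constants $C_{K}$ become global and $v^{m+1}f\in W^{m,p}(\Omega)$ with a global estimate.

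The main obstacle is precisely this last passage: a priori the distributional derivative $D^{\alpha}(v^{m+1}f)$ on $\Omega$ might differ from $G_{\alpha}$ by a distribution supported on the Lebesgue-null bad set $\Omega\setminus\Omega^{*}$, and it is the cut-off argument---resting on the uniform bound $|v^{m}\partial^{\beta}\chi_{n}|\le C$---that shows no such singular part appears.
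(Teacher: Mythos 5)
Your proof is correct, but it takes a genuinely different route from the paper's. The paper proves (\ref{eq:Leibniz=000020form}) directly on $\Omega$ by induction on $|\alpha|$: the base case applies the defining identity (\ref{eq:=000020alpha-Weak=000020=000020derivative}) with the test function $\phi v^{m-|\alpha|}\in\mathcal{C}_{c}^{m}(\Omega)$ (so that $v^{|\alpha|+1}\cdot\phi v^{m-|\alpha|}=v^{m+1}\phi$), and the induction step shifts one derivative at a time, each time absorbing the surplus powers of $v$ coming from $\partial^{\beta-\sigma}v^{m+1}=g_{\beta-\sigma}v^{m+1-|\beta-\sigma|}$ into the test function; no cut-offs and no passage through the regular set occur. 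You instead (i) convert the $v$-weak derivatives into ordinary weak derivatives on $\Omega^{*}=\Omega_{|v|^{pm+p}}^{*}$ via assertion \ref{enu:comp2} of Prop.~\ref{prop:comparison=000020with=000020weak=000020derivative=000020} (using the monotonicity $\Omega_{|v|^{pm+p}}^{*}\subseteq\Omega_{|v|^{p|\beta|+p}}^{*}$, whose correct justification is $|\beta|\leq m$ rather than $s(\beta)\leq m$, though both hold), (ii) apply the classical Leibniz rule there, and (iii) pass from test functions supported in $\Omega^{*}$ to all of $\mathcal{D}(\Omega)$ by the cut-off argument with $\chi_{n}$ and the uniform bound $|v^{m}\partial^{\beta}\chi_{n}|\leq C$, exactly as in the treatment of (\ref{eq:2.5555}). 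Both arguments share the first step: the local estimate showing that the right-hand side of (\ref{eq:Leibniz=000020form}) lies in $L_{\mathrm{loc}}^{p}(\Omega)$ (globally in $L^{p}(\Omega)$ when $v\in\mathcal{C}_{b}^{m}(\Omega)$), which is what yields continuity. What your route buys: modularity (Prop.~\ref{prop:comparison=000020with=000020weak=000020derivative=000020} and standard Sobolev calculus are used as black boxes), and an explicit reason why $D^{\alpha}(v^{m+1}f)$ cannot differ from the candidate by a distribution carried on the Lebesgue-null set $\Omega\backslash\Omega^{*}$; it also bypasses the paper's compressed induction step, where the identity $\int_{\Omega}D_{v}^{\sigma}f\,\partial^{\gamma}(\phi\partial^{\beta-\sigma}v^{m+1})\,\diff x=-\int_{\Omega}\phi\,\partial^{\beta-\sigma}v^{m+1}D_{v}^{\sigma+\gamma}f\,\diff x$ implicitly uses that iterated $v$-weak derivatives compose. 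What the paper's route buys: it is purely algebraic, stays entirely on $\Omega$, needs no limiting argument beyond the definition, and does not require the inclusion between the regular sets for different exponents.
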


\begin{proof}
Let $f\in W_{s,v}^{m,p}(\Omega)$ and $\alpha\in\pi_{m}$. For any
$\sigma\in\pi_{\alpha}$, there exists $g_{\sigma}\in\mathcal{C}^{m-|\sigma|}(\Omega)$
such that $\partial^{\sigma}v^{m+1}=g_{\sigma}v^{m+1-|\sigma|}$,
which implies that 
\begin{equation}
\forall K\Subset\Omega:\,\left\Vert D_{v}^{\beta}f\partial^{\alpha-\beta}v^{m+1}\right\Vert _{L^{p}(K)}\leq C||v^{m-|\alpha|}g_{\alpha-\beta}||_{L^{\infty}(K)}\left\Vert |v|^{s(\beta)+1}D_{v}^{\beta}f\right\Vert _{L^{p}(\Omega)}\label{eq:=000020continuity}
\end{equation}
where we used (\ref{eq:=000020w^=00005Calpha<=000020w_=00005Calpha})
(with $w_{\beta}=|v|^{s(\beta)+1}$) in the last inequality, which
shows that the right hand side of (\ref{eq:Leibniz=000020form}) belongs
to $L_{\mathrm{loc}}^{p}(\Omega)$ for all $\alpha\in\pi_{m}$. In
case $v\in\mathcal{C}_{b}^{m}(\Omega)$, $v^{m-|\alpha|}g_{\alpha-\beta}\in L^{\infty}(\Omega)$
and the constant $C$ does not depend on the compact set $K$. Thus,
(\ref{eq:=000020continuity}) holds with $\Omega$ instead of $K$.
Therefore, the right hand side of (\ref{eq:Leibniz=000020form}) belongs
to $L^{p}(\Omega)$ for all $\alpha\in\pi_{m}$. We show now that
$v^{m+1}f$ is weakly differentiable in the sense of $L_{\mathrm{loc}}^{1}(\Omega)$
and that the Leibniz formula (\ref{eq:Leibniz=000020form}) holds.
Indeed, we will prove this by induction. Suppose first that $\alpha\in\pi_{1}^{0}$.
Choose $\phi\in\mathcal{D}(\Omega)$. We have
\begin{multline*}
\int_{\Omega}fv^{m+1}\partial^{\alpha}\phi\,\diff x=\int_{\Omega}f(\partial^{\alpha}(v^{m+1}\phi)-\phi\partial^{\alpha}v^{m+1})\,\diff x\\
=-\int_{\Omega}(v^{m+1}D_{v}^{\alpha}f+f\partial^{\alpha}v^{m+1})\phi\,\diff x,
\end{multline*}
where we used (\ref{eq:=000020alpha-Weak=000020=000020derivative})
with the test function $\phi v^{m-|\alpha|}\in\mathcal{C}_{c}^{m}(\Omega)$,
which proves (\ref{eq:Leibniz=000020form}) when $\alpha\in\pi_{1}^{0}$.
We assume now that (\ref{eq:Leibniz=000020form}) holds for all $\alpha\in\pi_{l}^{0}$
for some $l<m$. Choose now $\alpha\in\pi_{l+1}^{l}$. Then $\alpha=\beta+\gamma$
for some $|\beta|=l$ and $|\gamma|=1$. Then for $\phi$ as above,
\begin{multline*}
\int_{\Omega}fv^{m+1}\partial^{\alpha}\phi\,\diff x=(-1)^{|\beta|}\int_{\Omega}D^{\beta}(fv^{m+1})\partial^{\gamma}\phi\,\diff x\\
=(-1)^{|\beta|}\sum_{0\leq\sigma\leq\beta}{\beta \choose \sigma}\int_{\Omega}D_{v}^{\sigma}f\partial^{\beta-\sigma}v^{m+1}\partial^{\gamma}\phi\,\diff x
\end{multline*}
where we used the induction assumption. Using the equality 
\[
\partial^{\beta-\sigma}v^{m+1}\partial^{\gamma}\phi=\partial^{\gamma}(\phi\partial^{\beta-\sigma}v^{m+1})-\phi\partial^{\alpha-\sigma}v^{m+1}
\]
 we get 
\begin{multline*}
\int_{\Omega}fv^{m+1}\partial^{\alpha}\phi\,\diff x=(-1)^{|\beta|}\sum_{0\leq\sigma\leq\beta}{\beta \choose \sigma}\int_{\Omega}D_{v}^{\sigma}f\partial^{\gamma}(\phi\partial^{\beta-\sigma}v^{m+1})\,\diff x\\
+(-1)^{|\alpha|}\sum_{0\leq\sigma\leq\beta}{\beta \choose \sigma}\int_{\Omega}\phi D_{v}^{\sigma}f\partial^{\alpha-\sigma}v^{m+1}\,\diff x.
\end{multline*}
Again, using (\ref{eq:=000020alpha-Weak=000020=000020derivative})
we obtain 
\[
\int_{\Omega}D_{v}^{\sigma}f\partial^{\gamma}(\phi\partial^{\beta-\sigma}v^{m+1})\,\diff x=-\int_{\Omega}\phi\partial^{\beta-\sigma}v^{m+1}D_{v}^{\gamma+\sigma}f\,\diff x
\]
Thus 
\[
\int_{\Omega}fv^{m+1}\partial^{\alpha}\phi\,\diff x=(-1)^{|\alpha|}\sum_{0\leq\sigma\leq\beta}{\beta \choose \sigma}\int_{\Omega}\phi(D_{v}^{\rho}f\partial^{\alpha-\rho}v^{m+1}+D_{v}^{\sigma}f\partial^{\alpha-\sigma}v^{m+1})\,\diff x,
\]
where $\rho=\sigma+\gamma$. It follows that 
\[
\int_{\Omega}fv^{m+1}\partial^{\alpha}\phi\,\diff x=(-1)^{|\alpha|}\sum_{0\leq\sigma\leq\alpha}{\alpha \choose \sigma}\int_{\Omega}\phi(D_{v}^{\sigma}f\partial^{\alpha-\sigma}v^{m+1})\,\diff x,
\]
since 
\[
{\beta \choose \sigma-\gamma}+{\beta \choose \sigma}={\alpha \choose \sigma},
\]
Therefore, the map $W_{s,v}^{m,p}(\Omega)\ra W_{\mathrm{loc}}^{m,p}(\Omega)$,
$f\ra v^{m+1}f$ is well defined and (\ref{eq:Leibniz=000020form})
holds. Moreover, the continuity follows from (\ref{eq:Leibniz=000020form})
and (\ref{eq:=000020continuity}), which completes the proof.
\end{proof}

\section{\protect\label{sec:Density-of-smooth}Density of smooth functions}

In this section, we are interested in finding sufficient conditions
under which smooth functions are dense in weighted Sobolev spaces
$W_{s,v}^{m,p}(\Omega)$. 

Let $v\in\mathcal{C}^{m,*}(\Omega)$, and let $s:\pi_{m}\ra\R_{\geq0}$
 be a map satisfying (\ref{eq:=000020the=000020map=000020s}). In
order to prove the density of smooth functions in $W_{s,v}^{m,p}(\Omega)$,
we show first that 
\begin{equation}
\forall f\in W_{s,v}^{m,p}(\Omega)\,\forall n\in\N:\,f\chi_{n}\in W^{m,p}(\Omega),\label{eq:=000020multiplication=0000201}
\end{equation}
where $(\chi_{n})$ is the sequence constructed in Prop.~\ref{prop:comparison=000020with=000020weak=000020derivative=000020}.
After that, we show that the sequence $(f\chi_{n})$ converges to
$f$ in $W_{s,v}^{m,p}(\Omega)$. Then, we use the density results
of smooth functions in Sobolev spaces $W^{m,p}(\Omega)$ to conclude. 
\begin{thm}
\label{thm:=000020multiplication=000020chi_n}Let $\Omega\subseteq\R^{d}$
be an open set, $p\in[1,\infty)$, $m\in\N$, $v\in\mathcal{C}^{m,*}(\Omega)$,
$s:\pi_{m}\ra\R_{\geq0}$  be a map satisfying (\ref{eq:=000020the=000020map=000020s}).
Assume that 
\begin{equation}
\forall\alpha\in\pi_{m}^{0}\,\exists C>0\,\forall x\in M_{2n}^{c}\cap M_{n}:\,|\partial^{\alpha}v(x)|\leq C.\label{eq:bounded=000020derivatives=000020of=000020v}
\end{equation}
Then, the mapping $W_{s,v}^{m,p}(\Omega)\ra W^{m,p}(\Omega)$, $f\ra\chi_{n}f$
is continuous. In particular, for any $f\in W_{s,v}^{m,p}(\Omega)$
and for all $\alpha\in\pi_{m}^{0}$ we have 
\begin{equation}
D^{\alpha}(f\chi_{n})=\sum_{0\leq\beta\leq\alpha}{\alpha \choose \beta}D_{v}^{\beta}f\partial^{\alpha-\beta}\chi_{n}.\label{eq:D(f=000020=00005Cchi_n)}
\end{equation}
\end{thm}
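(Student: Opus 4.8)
The plan is to separate the argument into an \emph{algebraic} part, which produces the Leibniz formula (\ref{eq:D(f=000020=00005Cchi_n)}) and places $f\chi_n$ in $W^{m,p}_{\mathrm{loc}}(\Omega)$, and an \emph{analytic} part, which upgrades this to $f\chi_n\in W^{m,p}(\Omega)$ and supplies the continuity estimate. The key observation is that $\chi_n$ factors through $v^{m+1}$. Since $\eta$ is flat to infinite order at $\pm\tfrac12$ and vanishes on $[-\tfrac12,\tfrac12]$, the function $\tilde\eta(t):=\eta(t)\,t^{-(m+1)}$ (set equal to $0$ on $[-\tfrac12,\tfrac12]$) is smooth on $\R$, so that $\psi_n:=\chi_n v^{-(m+1)}=n^{m+1}\,\tilde\eta(nv)\in\mathcal{C}^{m}(\Omega)$ as a composition of a smooth map with $v\in\mathcal{C}^{m}(\Omega)$. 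Because $\chi_n=\eta(0)=0$ wherever $v=0$, one checks that $f\chi_n=(v^{m+1}f)\,\psi_n$ almost everywhere.

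For the algebraic part I would invoke Prop.~\ref{prop:continuous=000020embedding=000020W^mp}: the hypothesis (\ref{eq:=000020w^=00005Calpha<=000020w_=00005Calpha}) holds for $w_\alpha=|v|^{s(\alpha)+1}$ (as $|\alpha|-s(\alpha)\ge0$), so $u:=v^{m+1}f\in W^{m,p}_{\mathrm{loc}}(\Omega)$ with $D^\alpha u$ given by (\ref{eq:Leibniz=000020form}). Multiplying $u$ by the $\mathcal{C}^{m}$ function $\psi_n$, the classical product rule for $W^{m,p}_{\mathrm{loc}}\times\mathcal{C}^{m}$ gives $f\chi_n=u\psi_n\in W^{m,p}_{\mathrm{loc}}(\Omega)$ with $D^\alpha(u\psi_n)=\sum_{\delta\le\alpha}\binom{\alpha}{\delta}\partial^\delta\psi_n\,D^{\alpha-\delta}u$. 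Substituting (\ref{eq:Leibniz=000020form}) for $D^{\alpha-\delta}u$, collecting the coefficient of each $D_v^\beta f$, and using the multi-index identity $\binom{\alpha}{\delta}\binom{\alpha-\delta}{\beta}=\binom{\alpha}{\beta}\binom{\alpha-\beta}{\delta}$ together with the Leibniz expansion $\sum_{\delta\le\alpha-\beta}\binom{\alpha-\beta}{\delta}\partial^\delta\psi_n\,\partial^{(\alpha-\beta)-\delta}v^{m+1}=\partial^{\alpha-\beta}(\psi_n v^{m+1})=\partial^{\alpha-\beta}\chi_n$, the double sum collapses to exactly (\ref{eq:D(f=000020=00005Cchi_n)}). (Equivalently, one may reprove (\ref{eq:D(f=000020=00005Cchi_n)}) by the same induction on $|\alpha|$ as in Prop.~\ref{prop:continuous=000020embedding=000020W^mp}, writing $\chi_n\phi=v^{|\alpha|+1}(\psi_n\phi)$ and feeding the admissible test function $\psi_n\phi\in\mathcal{C}_c^{|\alpha|}(\Omega)$ into (\ref{eq:=000020alpha-Weak=000020=000020derivative}).)

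For the analytic part I would show each summand of (\ref{eq:D(f=000020=00005Cchi_n)}) lies in $L^{p}(\Omega)$ with norm at most $C_n\|D_v^\beta f\|_{L^p_{w_\beta}}$. The decisive point is that for $\beta<\alpha$ the factor $\partial^{\alpha-\beta}\chi_n$ is supported on the shell $S_n:=M_n\cap M_{2n}^{c}=\{\,1/(2n)<|v|\le1/n\,\}$. On $S_n$ the Faà di Bruno expansion of $\partial^{\alpha-\beta}\chi_n$ (as in the proof of Prop.~\ref{prop:comparison=000020with=000020weak=000020derivative=000020}) consists of terms $n^{k}\eta^{(k)}(nv)$ times products of derivatives $\partial^{\mu}v$ with $1\le|\mu|\le|\alpha-\beta|\le m$; assumption (\ref{eq:bounded=000020derivatives=000020of=000020v}) bounds each such $|\partial^{\mu}v|$ there, giving $|\partial^{\alpha-\beta}\chi_n|\le Cn^{|\alpha-\beta|}$ on $S_n$. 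Writing $|D_v^\beta f|=|v|^{-(s(\beta)+1)}\bigl(|v|^{s(\beta)+1}|D_v^\beta f|\bigr)$ and using $|v|^{-(s(\beta)+1)}\le(2n)^{s(\beta)+1}$ on $S_n$, each mixed term is dominated by $C_n\,|v|^{s(\beta)+1}|D_v^\beta f|\in L^{p}(\Omega)$; the term with $\beta=\alpha$ is controlled by $|\chi_n D_v^\alpha f|\le(2n)^{s(\alpha)+1}|v|^{s(\alpha)+1}|D_v^\alpha f|$ on $M_{2n}^{c}$ and is $0$ elsewhere. Summing over $\beta\le\alpha$ and $\alpha\in\pi_m$ yields $\|f\chi_n\|_{W^{m,p}}\le C_n\|f\|_{W_{s,v}^{m,p}}$, which is both the continuity of $f\mapsto\chi_n f$ and the membership $f\chi_n\in W^{m,p}(\Omega)$.

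I expect this last analytic step to be the main obstacle. Its whole force lies in combining the Faà di Bruno bound $n^{|\alpha-\beta|}$ with the two-sided control $1/(2n)<|v|\le1/n$ on the thin shell $S_n$: it is precisely this interplay that converts the blowing-up derivatives of $\chi_n$ into the admissible weights $|v|^{s(\beta)+1}$, and it is exactly here that assumption (\ref{eq:bounded=000020derivatives=000020of=000020v}) on the boundedness of the derivatives of $v$ near $\{v=0\}$ is indispensable. A secondary, easily overlooked point is the global $\mathcal{C}^{m}$ regularity of $\psi_n=\chi_n v^{-(m+1)}$ across the zero set $\{v=0\}$, which rests on $\eta$ being flat at $\pm\tfrac12$ and identically zero near the origin.
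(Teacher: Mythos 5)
Your proof is correct, and its analytic half coincides with the paper's; but your derivation of the Leibniz formula (\ref{eq:D(f=000020=00005Cchi_n)}) takes a genuinely different route. The paper proves (\ref{eq:D(f=000020=00005Cchi_n)}) by redoing, from scratch, the induction on $|\alpha|$ used for Prop.~\ref{prop:continuous=000020embedding=000020W^mp}, feeding the tailored test functions $\phi\chi_n v^{-|\alpha|-1}\in\mathcal{C}_c^{m}(\Omega)$ into (\ref{eq:=000020alpha-Weak=000020=000020derivative}) --- this is, up to the $\alpha$-dependent power of $v$, exactly your parenthetical alternative. Your main route instead reuses Prop.~\ref{prop:continuous=000020embedding=000020W^mp} as a black box: you write $\chi_n=\psi_n v^{m+1}$ with $\psi_n=n^{m+1}\tilde\eta(nv)\in\mathcal{C}^{m}(\Omega)$, apply the classical $W^{m,p}_{\mathrm{loc}}\times\mathcal{C}^{m}$ product rule to $u=v^{m+1}f$ (whose derivatives are given by (\ref{eq:Leibniz=000020form})), and collapse the double sum via ${\alpha \choose \delta}{\alpha-\delta \choose \beta}={\alpha \choose \beta}{\alpha-\beta \choose \delta}$ together with $\partial^{\alpha-\beta}(\psi_n v^{m+1})=\partial^{\alpha-\beta}\chi_n$; I checked this computation and it is right. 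Your smoothness argument for $\tilde\eta$ (flatness of $\eta$ at $\pm1/2$) is sound, though slightly more than needed: since $\chi_n$ vanishes identically on the open neighborhood $\{|v|<1/(2n)\}$ of the zero set of $v$, the function $\chi_n v^{-(m+1)}$ extended by $0$ there is $\mathcal{C}^m$ by gluing, which is essentially how the paper justifies its test functions. What your route buys is economy --- no near-verbatim duplication of the induction already carried out for Prop.~\ref{prop:continuous=000020embedding=000020W^mp} --- at the price of the regularity observation on $\psi_n$ and the combinatorial collapse; the paper's route is self-contained but repetitive. Finally, your analytic step (support of $\partial^{\alpha-\beta}\chi_n$ in the shell $M_n\cap M_{2n}^{c}$, Faà di Bruno plus assumption (\ref{eq:bounded=000020derivatives=000020of=000020v}) to bound it there, and $|v|^{-(s(\beta)+1)}\le(2n)^{s(\beta)+1}$ on $M_{2n}^{c}$ to pass to the weighted norms) is in substance identical to the paper's, which first observes $\chi_n\in\mathcal{C}_b^{m}(\Omega)$ and then applies the same estimate (\ref{eq:=000020continuity=000020multiplication=000020by=000020=00005Cchi_n}).
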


\begin{proof}
First note that $\chi_{n}\in\mathcal{C}_{b}^{m}(\Omega)$ i.e.~$\partial^{\alpha}\chi_{n}$
is bounded on $\Omega$ for all $\alpha\in\pi_{m}$. This is because
$\eta_{n}^{(k)}\equiv0$ on $M_{2n}\cup M_{n}^{c}$ for all $k\neq0$,
and $\partial^{\alpha}v$ is bounded on $M_{2n}^{c}\cap M_{n}$. Let
$f\in W_{s,v}^{m,p}(\Omega)$. We show now that $D_{v}^{\beta}f\partial^{\alpha-\beta}\chi_{n}\in L^{p}(\Omega)$
for all $\alpha\in\pi_{m}$ and for all $\beta\in\pi_{\alpha}$. Indeed,
since supp$(\chi_{n})\subseteq M_{2n}^{c}$ we have 
\begin{multline}
||D_{v}^{\beta}f\partial^{\alpha-\beta}\chi_{n}||_{L^{p}(\Omega)}=|||v|^{-s(\beta)-1}\partial^{\alpha-\beta}\chi_{n}|v|^{s(\beta)+1}D_{v}^{\beta}f||_{L^{p}(\Omega)}\\
\leq(2n)^{s(\beta)+1}||\partial^{\alpha-\beta}\chi_{n}||_{L^{\infty}(\Omega)}|||v|^{s(\beta)+1}D_{v}^{\beta}f||_{L^{p}(\Omega)}.\label{eq:=000020continuity=000020multiplication=000020by=000020=00005Cchi_n}
\end{multline}
We claim now that $\chi_{n}f$ is $\alpha$-weakly differentiable
in the sense of $L_{\mathrm{loc}}^{1}(\Omega)$ for all $\alpha\in\pi_{m}^{0}$
and that (\ref{eq:D(f=000020=00005Cchi_n)}) holds. Indeed, we will
prove this by induction as we did for (\ref{eq:Leibniz=000020form}).
Suppose first that $\alpha\in\pi_{1}^{0}$. Choose $\phi\in\mathcal{D}(\Omega)$.
Using (\ref{eq:=000020alpha-Weak=000020=000020derivative}) with the
test function $\phi\chi_{n}v^{-|\alpha|-1}\in\mathcal{C}_{c}^{m}(\Omega)$
(because $\chi_{n}\equiv0$ on $M_{2n}$) we get 
\[
\int_{\Omega}f\partial^{\alpha}(\chi_{n}\phi)\,\diff x=-\int_{\Omega}\chi_{n}\phi D_{v}^{\alpha}f\,\diff x.
\]
It follows that 
\[
\int_{\Omega}f\chi_{n}\partial^{\alpha}\phi\,\diff x=\int_{\Omega}f(\partial^{\alpha}(\chi_{n}\phi)-\phi\partial^{\alpha}\chi_{n})\,\diff x=-\int_{\Omega}(\chi_{n}D_{v}^{\alpha}f+f\partial^{\alpha}\chi_{n})\phi\,\diff x,
\]
which proves our claim when $\alpha\in\pi_{1}^{0}$. We assume now
that (\ref{eq:D(f=000020=00005Cchi_n)}) holds for all $\alpha\in\pi_{l}^{0}$
for some $l<m$. Choose now $\alpha\in\pi_{l+1}^{l}$. Then $\alpha=\beta+\gamma$
for some $|\beta|=l$ and $|\gamma|=1$. Then for $\phi$ as above,
\begin{multline*}
\int_{\Omega}f\chi_{n}\partial^{\alpha}\phi\,\diff x=(-1)^{|\beta|}\int_{\Omega}D^{\beta}(f\chi_{n})\partial^{\gamma}\phi\,\diff x\\
=(-1)^{|\beta|}\sum_{0\leq\sigma\leq\beta}{\beta \choose \sigma}\int_{\Omega}D_{v}^{\sigma}f\partial^{\beta-\sigma}\chi_{n}\partial^{\gamma}\phi\,\diff x
\end{multline*}
where we used the induction assumption. Using the equality 
\[
\partial^{\beta-\sigma}\chi_{n}\partial^{\gamma}\phi=\partial^{\gamma}(\phi\partial^{\beta-\sigma}\chi_{n})-\phi\partial^{\alpha-\sigma}\chi_{n}
\]
we get 
\begin{multline*}
\int_{\Omega}f\chi_{n}\partial^{\alpha}\phi\,\diff x=(-1)^{|\beta|}\sum_{0\leq\sigma\leq\beta}{\beta \choose \sigma}\int_{\Omega}D_{v}^{\sigma}f\partial^{\gamma}(\phi\partial^{\beta-\sigma}\chi_{n})\,\diff x\\
+(-1)^{|\alpha|}\sum_{0\leq\sigma\leq\beta}{\beta \choose \sigma}\int_{\Omega}\phi D_{v}^{\sigma}f\partial^{\alpha-\sigma}\chi_{n}\,\diff x.
\end{multline*}
Again, using (\ref{eq:=000020alpha-Weak=000020=000020derivative})
we obtain 
\[
\int_{\Omega}D_{v}^{\sigma}f\partial^{\gamma}(\phi\partial^{\beta-\sigma}\chi_{n})\,\diff x=-\int_{\Omega}\phi\partial^{\beta-\sigma}\chi_{n}D_{v}^{\gamma+\sigma}f\,\diff x
\]
Thus 
\[
\int_{\Omega}f\chi_{n}\partial^{\alpha}\phi\,\diff x=(-1)^{|\alpha|}\sum_{0\leq\sigma\leq\beta}{\beta \choose \sigma}\int_{\Omega}\phi(D_{v}^{\rho}f\partial^{\alpha-\rho}\chi_{n}+D_{v}^{\sigma}f\partial^{\alpha-\sigma}\chi_{n})\,\diff x,
\]
where $\rho=\sigma+\gamma$. It follows that 
\[
\int_{\Omega}f\chi_{n}\partial^{\alpha}\phi\,\diff x=(-1)^{|\alpha|}\sum_{0\leq\sigma\leq\alpha}{\alpha \choose \sigma}\int_{\Omega}\phi(D_{v}^{\sigma}f\partial^{\alpha-\sigma}\chi_{n})\,\diff x,
\]
since 
\[
{\beta \choose \sigma-\gamma}+{\beta \choose \sigma}={\alpha \choose \sigma},
\]
Therefore, the map $W_{s,v}^{m,p}(\Omega)\ra W^{m,p}(\Omega)$, $f\ra\chi_{n}f$
is well defined and (\ref{eq:D(f=000020=00005Cchi_n)}) holds. Moreover,
the continuity follows from (\ref{eq:D(f=000020=00005Cchi_n)}) and
(\ref{eq:=000020continuity=000020multiplication=000020by=000020=00005Cchi_n}),
completing the proof.
\end{proof}
\begin{rem}
Using Thm.~\ref{thm:=000020multiplication=000020chi_n} with $m=0$
we get that $\chi_{n}f\in L^{p}(\Omega)$. By the dominated convergence
theorem we conclude that $\chi_{n}f\ra f$ in $L_{v}^{p}(\Omega)$
and hence $L^{p}(\Omega)$ is dense in $L_{v}^{p}(\Omega)$. If $v$
is bounded on $\Omega$, then $\mathcal{D}(\Omega)$ is dense in $L_{v}^{p}(\Omega)$. 

Another way to prove the density of $\mathcal{D}(\Omega)$ in $L_{w}^{p}(\Omega)$
($w\in E_{p}(\Omega)$) is to assume that $w\in\mathcal{C}^{\infty}(\Omega)$.
In this setting, given a sequence $(\phi_{n})$ of $\mathcal{D}(\Omega)$
that converges to $wf$ in $L^{2}(\Omega)$, the sequence $(\phi_{n}\chi_{n}w^{-1})$
($\chi_{n}$ defined with $v=w$) of $\mathcal{D}(\Omega)$ converges
to $f$ in $L_{w}^{p}(\Omega)$. 
\end{rem}

The aim now is to show that the sequence $(\chi_{n}f)$ of $W^{m,p}(\Omega)$
(provided assumption (\ref{eq:bounded=000020derivatives=000020of=000020v})
holds) converges to $f$ in $W_{s,v}^{m,p}(\Omega)$ when $v$ is
bounded. More precisely, we plan to show that 
\[
\forall\alpha\in\pi_{m}:\,|v|^{s(\alpha)+1}D^{\alpha}(f\chi_{n})=|v|^{s(\alpha)+1}\sum_{0\leq\beta\leq\alpha}{\alpha \choose \beta}D_{v}^{\beta}f\partial^{\alpha-\beta}\chi_{n}\to|v|^{s(\alpha)+1}D_{v}^{\alpha}f
\]
with convergence in $L^{p}(\Omega)$, where we used (\ref{eq:D(f=000020=00005Cchi_n)})
in the first equality. Since $\chi_{n}|v|^{s(\beta)+1}D_{v}^{\beta}f\ra|v|^{s(\beta)+1}D_{v}^{\beta}f$
in $L^{p}(\Omega)$ for all $\beta\in\pi_{m}$ by the dominated convergence
theorem, it suffices to show that 
\[
\forall\alpha\in\pi_{m}\,\forall\beta\in\pi_{\alpha}\backslash\{\alpha\}:\,|v|^{s(\beta)+1}(D_{v}^{\beta}f)\,|v|^{s(\alpha)-s(\beta)}\partial^{\alpha-\beta}\chi_{n}\ra0\,\,\text{in}\,\,L^{p}(\Omega).
\]
However, since
\begin{enumerate}
\item $\partial^{\sigma}\chi_{n}\ra0$ pointwise a.e. in $\Omega$ for all
$\sigma\in\pi_{m}^{0}$, 
\item $\,|v|^{s(\beta)+1}(D_{v}^{\beta}f)\in L^{p}(\Omega)$ for all $\beta\in\pi_{m}$,
\item $\partial^{\sigma}\chi_{n}=0$ on $M_{n}^{c}\cup M_{2n}$ and $|v|^{s(\sigma)}\leq n^{-s(\sigma)}$
on $M_{n}\cap M_{2n}^{c}$ for all $\sigma\in\pi_{m}^{0}$, 
\item $s(\alpha)-s(\beta)\geq s(\alpha-\beta)$ and hence $|v|^{s(\alpha)-s(\beta)}\leq C|v|^{s(\alpha-\beta)}$
for all $x\in\Omega$ for some $C$ independent of $x$ (since $v$
is bounded on $\Omega$) 
\end{enumerate}
it suffices to have the following property 
\begin{equation}
\exists C>0\,\forall n\in\N\,\forall\sigma\in\pi_{m}^{0}\,\forall x\in M_{n}\cap M_{2n}^{c}:\,|\partial^{\sigma}\chi_{n}|\leq Cn^{s(\sigma)}\label{eq:2.6}
\end{equation}
and conclude then by the dominated convergence theorem. 

In the next lemma we give an assumption on $v$ that yields (\ref{eq:2.6}). 
\begin{lem}
\label{lem:chi_n}Let $\Omega\subseteq\R^{d}$ be an open set, $p\in[1,\infty)$,
$m\in\N$, $v\in\mathcal{C}^{m,*}(\Omega)$, $s:\pi_{m}\ra\R_{\geq0}$
be a map satisfying (\ref{eq:=000020the=000020map=000020s}). Assume
that 
\begin{equation}
\exists C>0\,\forall n\in\N\,\forall\sigma\in\pi_{m}^{0}\,\forall x\in M_{n}\cap M_{2n}^{c}:\,|\partial^{\sigma}v(x)|\leq Cn^{s(\sigma)-1}.\label{eq:=000020assumption=000020on=000020w}
\end{equation}
Then, (\ref{eq:2.6}) holds.
\end{lem}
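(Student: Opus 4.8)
The plan is to expand $\partial^{\sigma}\chi_{n}$ by the Faà di Bruno formula, estimate each resulting term on the transition set $M_{n}\cap M_{2n}^{c}$ using the hypothesis (\ref{eq:=000020assumption=000020on=000020w}), and then collect the powers of $n$ with the help of the superadditivity of $s$. First I would recall that $\chi_{n}=\eta_{n}\circ v$ with $\eta_{n}=\eta(n\,\cdot)$, so that $\eta_{n}^{(k)}(t)=n^{k}\eta^{(k)}(nt)$. For a fixed $\sigma\in\pi_{m}^{0}$, the Faà di Bruno formula (as already used in the proof of Prop.~\ref{prop:comparison=000020with=000020weak=000020derivative=000020}) gives
\[
\partial^{\sigma}\chi_{n}=\sum_{\pi\in\Pi}n^{|\pi|}\,\eta^{(|\pi|)}(nv)\prod_{B\in\pi}\partial^{\beta_{B}}v,
\]
where $\pi$ ranges over the partitions of the multiset of nonzero indices of $\sigma$, and for each block $B\in\pi$ I write $\beta_{B}\in\N^{d}$ for the associated multi-index, so that $|\beta_{B}|=|B|\ge1$ and $\sum_{B\in\pi}\beta_{B}=\sigma$. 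Since $\eta^{(k)}\equiv0$ off $\{1/2\le|\cdot|\le1\}$ for every $k\ge1$, the factor $\eta^{(|\pi|)}(nv(x))$ vanishes unless $1/(2n)\le|v(x)|\le1/n$, i.e.~unless $x\in M_{n}\cap M_{2n}^{c}$; this is consistent with the earlier observation that $\partial^{\sigma}\chi_{n}=0$ on $M_{n}^{c}\cup M_{2n}$, and it means it suffices to bound the terms on $M_{n}\cap M_{2n}^{c}$.

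Next I would estimate a single term on $M_{n}\cap M_{2n}^{c}$. There $|\eta^{(|\pi|)}(nv)|\le\|\eta^{(|\pi|)}\|_{L^{\infty}}$, a constant depending only on $\eta$ and $m$. Each block multi-index satisfies $\beta_{B}\in\pi_{m}^{0}$ (because $1\le|\beta_{B}|\le|\sigma|\le m$), so the hypothesis (\ref{eq:=000020assumption=000020on=000020w}) yields $|\partial^{\beta_{B}}v|\le Cn^{s(\beta_{B})-1}$ with a single constant $C$. Multiplying the $|\pi|$ block factors together with the prefactor $n^{|\pi|}$, the powers of $n$ combine as
\[
n^{|\pi|}\prod_{B\in\pi}n^{s(\beta_{B})-1}=n^{\sum_{B\in\pi}s(\beta_{B})},
\]
the $+|\pi|$ and the $-|\pi|$ cancelling exactly. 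This exact cancellation is the crux of the estimate, and the numerical constants $C^{|\pi|}$ and $\|\eta^{(|\pi|)}\|_{L^{\infty}}$ (finitely many, since $|\pi|\le m$) get folded into a single constant.

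Finally I would remove the dependence of the exponent on the chosen partition. Iterating the superadditivity relation $s(\alpha-\beta)+s(\beta)\le s(\alpha)$ from (\ref{eq:=000020the=000020map=000020s}) by induction on the number of blocks (split $\sigma=\beta_{1}+(\beta_{2}+\dots+\beta_{k})$ and recurse) gives $\sum_{B\in\pi}s(\beta_{B})\le s(\sigma)$ for every partition $\pi$ of $\sigma$. Hence each term is bounded by $C'n^{s(\sigma)}$, and since the number of partitions depends only on $|\sigma|\le m$, summing the finitely many terms yields a constant $C''$, independent of $n$, of $\sigma\in\pi_{m}^{0}$ and of $x$, with $|\partial^{\sigma}\chi_{n}(x)|\le C''n^{s(\sigma)}$ on $M_{n}\cap M_{2n}^{c}$. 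This is precisely (\ref{eq:2.6}). The only genuinely non-routine points are the exact cancellation of the powers of $n$ and the superadditivity induction; everything else is bookkeeping with the Faà di Bruno expansion and the support properties of $\eta$.
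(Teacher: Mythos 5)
Your proposal is correct and follows essentially the same route as the paper's proof: expand $\partial^{\sigma}\chi_{n}$ by Faà di Bruno, bound the chain-rule factor by $Cn^{|\pi|}$ (you write it as $n^{|\pi|}\eta^{(|\pi|)}(nv)$ with $\eta^{(|\pi|)}$ bounded, which is the same thing), apply the hypothesis (\ref{eq:=000020assumption=000020on=000020w}) to each block factor, and cancel the powers of $n$ using the iterated superadditivity $\sum_{B\in\pi}s(\beta_{B})\leq s(\sigma)$ from (\ref{eq:=000020the=000020map=000020s}). Your added details --- the support observation for $\eta^{(k)}$ and the explicit induction for superadditivity --- are correct elaborations of steps the paper leaves implicit, not a different argument.
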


\begin{proof}
Fix $n\in\N$ and let $\sigma\in\pi_{m}^{0}$. By the Faà di Bruno
formula (\cite{wiki}), for any $x\in M_{n}\cap M_{2n}^{c}$ we have
\[
\partial^{\sigma}\chi_{n}(x)=\sum_{k=1}^{|\sigma|}|\{\pi\in\Pi\mid|\pi|=k\}|\eta_{n}^{(k)}(v(x))\sum_{\pi\in\Pi,|\pi|=k}\prod_{B\in\pi}\frac{\partial^{|B|}v}{\prod_{j\in B}\partial x_{j}}.
\]
(See the proof of \ref{enu:comp2} of Prop.~\ref{prop:comparison=000020with=000020weak=000020derivative=000020}
for more details). By construction of the sequence $(\eta_{n})$ we
have 
\[
\forall k\in\N\,\exists C>0\,\forall n\in\N\,\forall x\in\R:\,|\eta_{n}^{(k)}(x)|\leq Cn^{k}.
\]
For any $B\in\pi$, we associate $\beta\in\N^{d}$ by setting $\beta_{j}:=|\{j\in B\}|$
for all $j$. It follows that 
\begin{equation}
|\partial^{\sigma}\chi_{n}(x)|\leq C\sum_{k=1}^{|\sigma|}C_{k}n^{k}\sum_{\pi\in\Pi,|\pi|=k}\prod_{B\in\pi}n^{s(\beta)-1}\label{eq:=000020faa=000020di=000020bruno=0000202}
\end{equation}
where we used assumption (\ref{eq:=000020assumption=000020on=000020w}).
It is easy to see that 
\[
\forall\pi\in\Pi:\,\prod_{B\in\pi}n^{s(\beta)-1}=n^{\sum_{B\in\pi}s(\beta)-|\pi|}\leq n^{s(\sigma)-|\pi|}
\]
where we used (\ref{eq:=000020the=000020map=000020s}), which gives
(\ref{eq:2.6}) when used in (\ref{eq:=000020faa=000020di=000020bruno=0000202}). 
\end{proof}
\begin{thm}
\label{thm:density}Let $\Omega\subseteq\R^{d}$ be an open set, $p\in[1,\infty)$,
$m\in\N$, and $v\in\mathcal{C}^{m,*}(\Omega)\cap L^{\infty}(\Omega)$,
$s:\pi_{m}\ra\R_{\geq0}$ be a map satisfying (\ref{eq:=000020the=000020map=000020s}).
Assume that (\ref{eq:=000020assumption=000020on=000020w}) holds.
Then, for any $f\in W_{s,v}^{m,p}(\Omega)$, the sequence $(\chi_{n}f)$
of $W^{m,p}(\Omega)$ (by Thm.~\ref{thm:=000020multiplication=000020chi_n})
converges to $f$ in $W_{s,v}^{m,p}(\Omega)$. 
\end{thm}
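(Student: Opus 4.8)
The plan is to read off the meaning of convergence in $W_{s,v}^{m,p}(\Omega)$ directly from the definition of its norm. Since for this space $w_\alpha=|v|^{s(\alpha)+1}$, proving $\chi_n f\to f$ amounts to showing that $\bigl\||v|^{s(\alpha)+1}\bigl(D_v^\alpha(\chi_n f)-D_v^\alpha f\bigr)\bigr\|_{L^p(\Omega)}\to 0$ for each $\alpha\in\pi_m$. Because $\chi_n f\in W^{m,p}(\Omega)\subseteq L^1_{\mathrm{loc}}(\Omega)$ by Thm.~\ref{thm:=000020multiplication=000020chi_n}, assertion \ref{enu:=000020comp1} of Prop.~\ref{prop:comparison=000020with=000020weak=000020derivative=000020} identifies the $v$-weak derivative $D_v^\alpha(\chi_n f)$ with the ordinary weak derivative $D^\alpha(\chi_n f)$, and the Leibniz formula (\ref{eq:D(f=000020=00005Cchi_n)}) expands the latter as $\sum_{0\le\beta\le\alpha}\binom{\alpha}{\beta}D_v^\beta f\,\partial^{\alpha-\beta}\chi_n$. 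The remaining work is to pass to the limit in this finite sum, weighted by $|v|^{s(\alpha)+1}$.

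First I would isolate the top-order term $\beta=\alpha$, which equals $\chi_n D_v^\alpha f$. Its contribution to the difference is $|v|^{s(\alpha)+1}(\chi_n-1)D_v^\alpha f$; since $\chi_n\equiv 1$ on $M_n^c$ by (\ref{eq:chi_n}) and $\{v=0\}$ is Lebesgue null (as $v\neq 0$ a.e.), we have $\chi_n\to 1$ a.e., while $|\chi_n-1|\le 1$ and $|v|^{s(\alpha)+1}D_v^\alpha f\in L^p(\Omega)$ (this is exactly the finiteness of $\|D_v^\alpha f\|_{L_{w_\alpha}^p}$ granted by $f\in W_{s,v}^{m,p}(\Omega)$), so this term vanishes in $L^p$ by dominated convergence. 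It then remains to prove, for every $\beta\in\pi_\alpha\setminus\{\alpha\}$, that $|v|^{s(\alpha)+1}D_v^\beta f\,\partial^{\alpha-\beta}\chi_n\to 0$ in $L^p(\Omega)$.

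For these lower-order terms I would factor $|v|^{s(\alpha)+1}=|v|^{s(\beta)+1}\,|v|^{s(\alpha)-s(\beta)}$ and write the integrand as $\bigl(|v|^{s(\beta)+1}D_v^\beta f\bigr)\cdot\bigl(|v|^{s(\alpha)-s(\beta)}\partial^{\alpha-\beta}\chi_n\bigr)$, the first factor being a fixed $L^p$ function. The point is that the second factor is uniformly bounded and tends to $0$ a.e. Indeed $\partial^{\alpha-\beta}\chi_n\to 0$ a.e., and by (\ref{eq:chi_n}) it is supported in $M_n\cap M_{2n}^c$, where Lemma~\ref{lem:chi_n} (applicable since (\ref{eq:=000020assumption=000020on=000020w}) is assumed) furnishes $|\partial^{\alpha-\beta}\chi_n|\le Cn^{s(\alpha-\beta)}$ through (\ref{eq:2.6}); on that same set $|v|\le 1/n$, and because $s(\alpha)-s(\beta)\ge s(\alpha-\beta)\ge 0$ by (\ref{eq:=000020the=000020map=000020s}) together with $v\in L^\infty(\Omega)$ one has $|v|^{s(\alpha)-s(\beta)}\le C|v|^{s(\alpha-\beta)}\le Cn^{-s(\alpha-\beta)}$. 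Multiplying the two estimates cancels the powers of $n$ and yields a bound uniform in $n$ and $x$, so the integrand is dominated by $C\,|v|^{s(\beta)+1}|D_v^\beta f|\in L^p(\Omega)$ and the dominated convergence theorem gives the claim. Summing the $L^p$ contributions over all $\alpha\in\pi_m$ and $\beta\in\pi_\alpha$ then finishes the proof.

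I expect the single delicate point to be precisely that uniform bound on $|v|^{s(\alpha)-s(\beta)}\partial^{\alpha-\beta}\chi_n$, where the three hypotheses must be used in concert: the Fa\`a di Bruno estimate of Lemma~\ref{lem:chi_n} controls $\partial^{\alpha-\beta}\chi_n$ by exactly the power $n^{s(\alpha-\beta)}$, the superadditivity $s(\alpha)-s(\beta)\ge s(\alpha-\beta)$ from (\ref{eq:=000020the=000020map=000020s}) produces enough vanishing powers of $|v|$ to absorb it, and the boundedness of $v$ is what legitimizes replacing $|v|^{s(\alpha)-s(\beta)}$ by $C|v|^{s(\alpha-\beta)}$. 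Dropping any of these---in particular the $L^\infty$ assumption on $v$---breaks the cancellation of the $n^{\pm s(\alpha-\beta)}$ factors, so this is where the argument genuinely lives.
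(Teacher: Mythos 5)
Your proof is correct and is essentially the paper's own argument: the paper's proof consists of Lemma~\ref{lem:chi_n} together with the paragraph preceding it, which carries out exactly your decomposition --- the Leibniz expansion from Theorem~\ref{thm:=000020multiplication=000020chi_n}, dominated convergence for the top-order term $\chi_n D_v^\alpha f$, and for the lower-order terms the factorization $|v|^{s(\alpha)+1}=|v|^{s(\beta)+1}|v|^{s(\alpha)-s(\beta)}$ combined with the bound (\ref{eq:2.6}), the superadditivity of $s$, and the boundedness of $v$ to cancel the powers of $n$. Your explicit appeal to assertion \ref{enu:=000020comp1} of Prop.~\ref{prop:comparison=000020with=000020weak=000020derivative=000020} to identify $D_v^\alpha(\chi_n f)$ with $D^\alpha(\chi_n f)$ is a detail the paper leaves implicit, but it matches the intended reading.
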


\begin{proof}
It follows from Lem.~\ref{lem:chi_n} and the paragraph preceding
it.
\end{proof}
We deduce from this theorem the following density results 
\begin{cor}
\label{cor:density}Under the assumptions of Thm.~\ref{thm:density},
we have 
\begin{enumerate}
\item \label{enu:Omega=00003DR^d}The space $\mathcal{D}(\Omega)$ is dense
in $W_{s,v}^{m,p}(\Omega)$ when $\Omega=\R^{d}$.
\item \label{enu:omega=000020bounded=000020}The space $\mathcal{C}^{\infty}(\Omega)\cap W^{m,p}(\Omega)$
is dense in $W_{s,v}^{m,p}(\Omega)$ in case $\Omega$ is bounded.
\end{enumerate}
\end{cor}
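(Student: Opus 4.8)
The plan is to approximate in two stages. First I would replace $f$ by the truncations $\chi_{n}f$, which by Thm.~\ref{thm:density} already belong to $W^{m,p}(\Omega)$ and converge to $f$ in $W_{s,v}^{m,p}(\Omega)$. Then I would approximate each fixed $\chi_{n}f$ by smooth functions using the \emph{classical} density theorems in the ordinary (unweighted) Sobolev space $W^{m,p}(\Omega)$, and finally transfer this second approximation back into the $W_{s,v}^{m,p}(\Omega)$-norm. The only ingredient beyond Thm.~\ref{thm:density} is therefore a single continuity statement linking the two norms, and everything else is bookkeeping with a triangle inequality.

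The crucial intermediate step is to show that, because $v\in L^{\infty}(\Omega)$, the inclusion $W^{m,p}(\Omega)\hookrightarrow W_{s,v}^{m,p}(\Omega)$ is a bounded linear map. I would argue as follows. Let $g\in W^{m,p}(\Omega)$. For every $\alpha\in\pi_{m}$ one has $g\in L_{\mathrm{loc}}^{1}(\Omega)$ and $\exists D^{\alpha}g\in L^{p}(\Omega)\subseteq L_{\mathrm{loc}}^{1}(\Omega)$, so assertion~\ref{enu:=000020comp1} of Prop.~\ref{prop:comparison=000020with=000020weak=000020derivative=000020} yields $\exists D_{v}^{\alpha}g=D^{\alpha}g$. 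Recalling $w_{\alpha}=|v|^{s(\alpha)+1}$ and writing $L:=\|v\|_{L^{\infty}(\Omega)}$, we get
\[
\|D_{v}^{\alpha}g\|_{L_{w_{\alpha}}^{p}}=\big\||v|^{s(\alpha)+1}D^{\alpha}g\big\|_{L^{p}(\Omega)}\leq L^{s(\alpha)+1}\|D^{\alpha}g\|_{L^{p}(\Omega)}.
\]
Summing the finitely many terms $|\alpha|\leq m$ gives $\|g\|_{W_{s,v}^{m,p}}\leq C\|g\|_{W^{m,p}}$ with $C:=\max_{|\alpha|\leq m}L^{s(\alpha)+1}$; in particular $g\in W_{s,v}^{m,p}(\Omega)$, and convergence in $W^{m,p}(\Omega)$ forces convergence in $W_{s,v}^{m,p}(\Omega)$.

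With this embedding both parts follow immediately. Fix $f\in W_{s,v}^{m,p}(\Omega)$ and $\eps>0$, and use Thm.~\ref{thm:density} to choose $n$ with $\|f-\chi_{n}f\|_{W_{s,v}^{m,p}}<\eps/2$, where $\chi_{n}f\in W^{m,p}(\Omega)$. For part~\ref{enu:Omega=00003DR^d}, with $\Omega=\R^{d}$, the classical density of $\mathcal{D}(\R^{d})$ in $W^{m,p}(\R^{d})$ provides $\phi\in\mathcal{D}(\R^{d})$ with $\|\chi_{n}f-\phi\|_{W^{m,p}}<\eps/(2C)$; the embedding applied to the difference gives $\|\chi_{n}f-\phi\|_{W_{s,v}^{m,p}}\leq C\|\chi_{n}f-\phi\|_{W^{m,p}}<\eps/2$, whence $\|f-\phi\|_{W_{s,v}^{m,p}}<\eps$. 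For part~\ref{enu:omega=000020bounded=000020}, with $\Omega$ bounded, the Meyers--Serrin theorem ($H=W$) supplies $g\in\mathcal{C}^{\infty}(\Omega)\cap W^{m,p}(\Omega)$ with $\|\chi_{n}f-g\|_{W^{m,p}}<\eps/(2C)$, and the same transfer yields $\|f-g\|_{W_{s,v}^{m,p}}<\eps$.

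The step I expect to require the most care is precisely the continuous embedding $W^{m,p}(\Omega)\hookrightarrow W_{s,v}^{m,p}(\Omega)$: one must check that for $g\in W^{m,p}(\Omega)$ the new weak derivative $D_{v}^{\alpha}g$ genuinely coincides with the ordinary distributional derivative $D^{\alpha}g$ (this is exactly assertion~\ref{enu:=000020comp1} of Prop.~\ref{prop:comparison=000020with=000020weak=000020derivative=000020}) and that the weighted norm is then dominated by the unweighted one, for which the boundedness hypothesis $v\in L^{\infty}(\Omega)$ is indispensable. Everything else reduces to the standard two-stage approximation, whose first stage is delegated to Thm.~\ref{thm:density} and whose second stage is handled by the well-known smooth-density theorems in unweighted Sobolev spaces.
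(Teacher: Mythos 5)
Your proposal is correct and follows essentially the same route as the paper: truncate with $\chi_{n}f$ via Thm.~\ref{thm:density}, approximate each $\chi_{n}f$ by the classical density theorems in $W^{m,p}(\Omega)$, and transfer back using the boundedness of $v$. The only difference is one of exposition: the continuous embedding $W^{m,p}(\Omega)\hookrightarrow W_{s,v}^{m,p}(\Omega)$, which you carefully justify via assertion~\ref{enu:=000020comp1} of Prop.~\ref{prop:comparison=000020with=000020weak=000020derivative=000020} and the bound $\|D_{v}^{\alpha}g\|_{L_{w_{\alpha}}^{p}}\leq L^{s(\alpha)+1}\|D^{\alpha}g\|_{L^{p}}$, is exactly what the paper compresses into the clause ``and hence in $W_{s,v}^{m,p}$ since $v\in L^{\infty}$.''
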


\begin{proof}
Let $f\in W_{s,v}^{m,p}(\R^{d})$. By Thm.~\ref{thm:density}, the
sequence $(\chi_{n}f)$ of $W^{m,p}(\R^{d})$ converges to $f$ in
$W_{s,v}^{m,p}(\R^{d})$. By density of $\mathcal{D}(\R^{d})$ in
$W^{m,p}(\R^{d})$, for all $n$ there exists a sequence $(\phi_{n,k})_{k}$
of $\mathcal{D}(\R^{d})$ that converges to $\chi_{n}f$ in $W^{m,p}(\R^{d})$,
and hence in $W_{s,v}^{m,p}(\R^{d})$ since $v\in L^{\infty}(\R^{d})$,
which proves assertion \ref{enu:Omega=00003DR^d}. 

Similarly, to prove assertion \ref{enu:omega=000020bounded=000020},
we use Thm.~\ref{thm:density} and the fact that the space $\mathcal{C}^{\infty}(\Omega)\cap W^{m,p}(\Omega)$
is dense in $W^{m,p}(\Omega)$ when $\Omega$ is bounded (see Thm.~2
of Subsection $5.3.2$ of \cite{Evans}).
\end{proof}
\begin{rem}
The boundedness assumption of $v$ in Thm.~\ref{thm:density} can
be dropped in case $s(\alpha)=|\alpha|$ since the assumption was
used to guarantee that the constant $C$ in (\ref{eq:=000020w^=00005Calpha<=000020w_=00005Calpha})
is independent of $K$.
\end{rem}

\section{\protect\label{sec:The-trace-operator}The trace operator}

Now, we discuss the different possibilities to define a trace operator
on $W_{s,v}^{1,p}(\Omega)$
\begin{rem}
\label{rem:trace}Let $\Omega\subseteq\R^{d}$ be a bounded open set
with a sufficiently smooth boundary\footnote{so that the trace operator on $W^{1,p}(\Omega)$ is well defined.},
e.g.~$\partial\Omega$ is $\mathcal{C}^{1}$, $p\in[1,\infty)$,
$v\in\mathcal{C}^{1,*}(\Omega)$, $s:\pi_{1}\ra\R_{\geq0}$ be a map
satisfying (\ref{eq:=000020the=000020map=000020s}). 
\begin{enumerate}
\item \label{enu:First-possibility}Assume that 
\begin{equation}
\exists K\Subset\Omega\,\exists\delta>0\,\forall x\in\Omega\backslash K:\,|v(x)|>\delta\label{eq:2.9}
\end{equation}
(or equivalently $M_{n_{0}}\Subset\Omega$ for some $n_{0}\in\N$).
Fix $n\geq n_{0}$. Then, (\ref{eq:bounded=000020derivatives=000020of=000020v})
holds and $\partial\Omega\subseteq\partial M_{n}^{c}$. By Thm.~\ref{thm:=000020multiplication=000020chi_n}
we have 
\begin{equation}
\chi_{n}f\in W^{1,p}(\Omega)\,\,\,\text{and}\,\,\,\forall\alpha\in\pi_{1}^{0}:\,D^{\alpha}(\chi_{n}f)=f\partial^{\alpha}\chi_{n}+\chi_{n}D_{v}^{\alpha}f.\label{eq:trace=0000201=000020possibility}
\end{equation}
If we denote by $|_{\partial\Omega}:W^{1,p}(\Omega)\ra L^{p}(\partial\Omega)$
the trace operator, then $(f\chi_{n})|_{\partial\Omega}$ is well
defined. Since $\chi_{n}\equiv1$ on $\Omega\backslash K\subseteq M_{n}^{c}$,
we have that $(f\chi_{n})|_{\partial\Omega}=f|_{\partial\Omega}$
which implies that $|_{\partial\Omega}$ can be extended to the space
$W_{s,v}^{1,p}(\Omega)$. Furthermore, by Thm.~\ref{thm:=000020multiplication=000020chi_n}
and the trace theorem in $W^{1,p}(\Omega)$ we get 
\begin{equation}
||f|_{\partial\Omega}||_{L^{p}(\partial\Omega)}\leq C||\chi_{n}f||_{W^{1,p}(\Omega)}\leq C_{n}||f||_{W_{s,v}^{1,p}(\Omega)}\label{eq:cont=000020trace=0000201}
\end{equation}
where $C_{n}$ is a constant that depends on $n$ but not in $f$,
which proves that $|_{\partial\Omega}$ is continuous on $W_{s,v}^{1,p}(\Omega)$. 
\item \label{enu:Second-possibility}If we assume that $v\in\mathcal{C}_{b}^{1}(\Omega)$,
then by Prop.~\ref{prop:continuous=000020embedding=000020W^mp},
we have that the map $W_{s,v}^{1,p}(\Omega)\ra W^{1,p}(\Omega)$,
$f\ra v^{2}f$ is continuous. In particular $(v^{2}f)|_{\partial\Omega}$
is well defined and belongs to $L^{p}(\partial\Omega)$. Moreover,
by the trace theorem in $W^{1,p}(\Omega)$ we have 
\begin{equation}
||(v^{2}f)|_{\partial\Omega}||_{L^{p}(\partial\Omega)}\leq C||v^{2}f||_{W^{1,p}(\Omega)}\leq C||f||_{W_{s,v}^{1,p}(\Omega)}\label{eq:=000020cont=000020trace=0000202}
\end{equation}
Therefore, the operator $W_{s,v}^{1,p}(\Omega)\ra L^{p}(\partial\Omega)$,
$f\ra(v^{2}f)|_{\partial\Omega}$ is continuous. 
\end{enumerate}
Therefore, we can define the trace operator on the space $W_{s,v}^{1,p}(\Omega)$
either by setting $\text{Tr}(f)=f|_{\partial\Omega}\in L^{p}(\partial\Omega)$,
or by setting $\text{Tr}(f)=(v^{2}f)|_{\partial\Omega}\in L^{p}(\partial\Omega)$
depending on the assumptions satisfied by the function $v$. 
\end{rem}

\begin{rem}
\label{rem:trace=000020}Note that in the favorite situation in which
assumption (\ref{eq:2.9}) holds and $v\in\mathcal{C}_{b}^{1}(\Omega)$,
both $f|_{\partial\Omega}$ and $(v^{2}f)|_{\partial\Omega}$ are
well defined and belong to $L^{p}(\partial\Omega)$ for all $f\in W_{s,v}^{1,p}(\Omega)$.
Moreover, both $v$ and $v^{-1}$ are bounded on $\partial\Omega$,
which implies that the operators $W_{s,v}^{1,p}(\Omega)\ra L^{p}(\partial\Omega)$,
$f\ra f|_{\partial\Omega}$, $f\ra(fv^{2})|_{\partial\Omega}$ are
equivalent, and hence $f|_{\partial\Omega}=0$ if and only if $(v^{2}f)|_{\partial\Omega}=0$. 
\end{rem}

These different possibilities motivate the following definition
\begin{defn}
Let $\Omega\subseteq\R^{d}$ be a bounded open set and $\partial\Omega$
is $\mathcal{C}^{1}$, $p\in[1,\infty)$, $v\in\mathcal{C}^{1,*}(\Omega)$,
$s:\pi_{1}\ra\R_{\geq0}$ be a map satisfying (\ref{eq:=000020the=000020map=000020s}). 
\begin{enumerate}
\item If assumption (\ref{eq:2.9}) holds, then 
\begin{equation}
\forall f\in W_{s,v}^{1,p}(\Omega):\,\text{Tr}_{1}(f):=f|_{\partial\Omega}\in L^{p}(\partial\Omega)\label{eq:trace=0000201}
\end{equation}
\item If $v\in\mathcal{C}_{b}^{1}(\Omega)$, then 
\begin{equation}
\forall f\in W_{s,v}^{1,p}(\Omega):\,\text{Tr}_{2}(f):=(v^{2}f)|_{\partial\Omega}\in L^{p}(\partial\Omega).\label{eq:trace=0000202}
\end{equation}
\end{enumerate}
\end{defn}

As we have seen in the beginning of this section, under the assumptions
given in the latter definition, the trace operators are continuous
(see (\ref{eq:cont=000020trace=0000201}) and (\ref{eq:=000020cont=000020trace=0000202})).

\section{\protect\label{sec:=000020space=000020of=000020null=000020trace=000020functions=000020}Spaces
of null trace}
\begin{defn}
Let $\Omega\subseteq\R^{d}$ be an open set, $p\in[1,\infty)$, $V:=\{w_{\alpha}\}_{\alpha\in\pi_{1}}$
be a collection of elements of $E_{p}(\Omega)$, and let $v\in\mathcal{C}^{1,*}(\Omega)$
be such that (\ref{eq:=000020w^=00005Calpha<=000020w_=00005Calpha})
holds; $X_{V,v,0}^{1,p}(\Omega)$ denotes the closure of $\mathcal{D}(\Omega)$
in $W_{V,v}^{1,p}(\Omega)$.
\end{defn}

The assumption (\ref{eq:=000020w^=00005Calpha<=000020w_=00005Calpha})
guarantees that the weighted Sobolev space $W_{V,v}^{1,p}(\Omega)$
is Cauchy complete, and hence $X_{V,v,0}^{1,p}(\Omega)$ is well defined. 

We now define the spaces $W_{s,v,0}^{1,p,i}$ ($i=1,$ $2$) using
the trace operators
\begin{defn}
Let $\Omega$ be a bounded open set and $\partial\Omega$ is $\mathcal{C}^{1}$,
$p\in[1,\infty)$, $v\in\mathcal{C}^{1,*}(\Omega)$, $s:\pi_{1}\ra\R_{\geq0}$
 be a map satisfying (\ref{eq:=000020the=000020map=000020s}). 
\begin{enumerate}
\item Assume that (\ref{eq:2.9}) holds. Then we set 
\[
W_{s,v,0}^{1,p,1}(\Omega):=\{f\in W_{s,v}^{1,p}(\Omega)\mid\text{Tr}_{1}(f)=0\}.
\]
\item Assume that $v\in\mathcal{C}_{b}^{1}(\Omega)$. Then we set 
\[
W_{s,v,0}^{1,p,2}(\Omega):=\{f\in W_{s,v}^{1,p}(\Omega)\mid\text{Tr}_{2}(f)=0\}.
\]
 
\end{enumerate}
\end{defn}

\begin{rem}
\label{rem:=000020space=000020of=000020null=000020trace=000020}~Recall
from Rem.~\ref{rem:trace=000020} that the operators $\text{Tr}_{1}$,
$\text{Tr}_{2}$ are equivalent when $v\in\mathcal{C}_{b}^{1}(\Omega)$
and (\ref{eq:2.9}) holds, which implies that 
\[
W_{s,v,0}^{1,p,1}(\Omega)=W_{s,v,0}^{1,p,2}(\Omega).
\]
\end{rem}

\begin{prop}
\label{prop:Banach=000020space=000020}Let $\Omega$ be a bounded
open set and $\partial\Omega$ is $\mathcal{C}^{1}$, $p\in[1,\infty)$,
$v\in\mathcal{C}^{1,*}(\Omega)$, $s:\pi_{1}\ra\R_{\geq0}$ be a map
satisfying (\ref{eq:=000020the=000020map=000020s}). 
\begin{enumerate}
\item If (\ref{eq:2.9}) holds, then the space $W_{s,v,0}^{1,p,1}(\Omega)$
is Cauchy complete.
\item If $v\in\mathcal{C}_{b}^{1}(\Omega)$, then the space $W_{s,v,0}^{1,p,2}(\Omega)$
is Cauchy complete.
\end{enumerate}
\end{prop}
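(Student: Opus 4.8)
The plan is to realize each null-trace space as a closed subspace of the ambient space $W_{s,v}^{1,p}(\Omega)$, which is already complete, and then invoke the standard fact that a closed subspace of a Banach space is itself a Banach space.

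First I would record that $W_{s,v}^{1,p}(\Omega)$ is Cauchy complete. Taking $V=\{w_\alpha\}_{\alpha\in\pi_1}$ with $w_\alpha:=|v|^{s(\alpha)+1}$, assumption (\ref{eq:=000020w^=00005Calpha<=000020w_=00005Calpha}) holds because $|v|^{|\alpha|-s(\alpha)}$ is locally bounded (as $|\alpha|-s(\alpha)\geq0$ by (\ref{eq:=000020the=000020map=000020s}) and $v$ is continuous), so Thm.~\ref{thm:W^m,p=000020complete=000020} yields that $W_{s,v}^{1,p}(\Omega)$ is a Banach space. It therefore suffices, in each case, to check that the null-trace subspace is closed.

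For part (i), under assumption (\ref{eq:2.9}) the continuity estimate (\ref{eq:cont=000020trace=0000201}) derived in Rem.~\ref{rem:trace} shows that $\text{Tr}_1:W_{s,v}^{1,p}(\Omega)\ra L^p(\partial\Omega)$, $f\mapsto f|_{\partial\Omega}$, is a continuous linear operator. Hence $W_{s,v,0}^{1,p,1}(\Omega)=\Ker(\text{Tr}_1)$, being the preimage of $\{0\}$ under a continuous map, is closed and therefore complete. Part (ii) is identical after replacing $\text{Tr}_1$ by $\text{Tr}_2:f\mapsto(v^2f)|_{\partial\Omega}$ and using the estimate (\ref{eq:=000020cont=000020trace=0000202}), which is the one valid when $v\in\mathcal{C}_b^1(\Omega)$.

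I do not anticipate a real obstacle: the mathematical content sits entirely in the continuity of the trace operators, which was already established in Rem.~\ref{rem:trace}, so what remains is the soft remark that the kernel of a continuous linear map into a normed space is closed. The only point to watch is bookkeeping---matching each hypothesis, (\ref{eq:2.9}) or $v\in\mathcal{C}_b^1(\Omega)$, with the continuity estimate that was proved under exactly that hypothesis.
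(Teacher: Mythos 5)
Your proposal is correct and follows the same route as the paper: both realize $W_{s,v,0}^{1,p,i}(\Omega)$ as the kernel of the continuous operator $\mathrm{Tr}_{i}$, hence a closed subspace of the Banach space $W_{s,v}^{1,p}(\Omega)$, and conclude completeness. Your explicit verification that the ambient space is complete (via $w_{\alpha}=|v|^{s(\alpha)+1}$ and Thm.~\ref{thm:W^m,p=000020complete=000020}) is a detail the paper established earlier and leaves implicit in its proof, so it is a welcome but not substantively different addition.
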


\begin{proof}
It is sufficient to prove that the space $W_{s,v,0}^{1,p,i}(\Omega)$
($i=1$, $2$) is closed since it is a subspace of a Banach space
$W_{s,v}^{1,p}(\Omega)$. However, the closedness follows from the
fact that $W_{s,v,0}^{1,p,i}(\Omega)=\mathrm{Ker}(\mathrm{Tr}_{i})$
and $\text{Tr}_{i}$ is continuous. 
\end{proof}
\begin{thm}
\label{thm:=000020equality=000020of=000020spaces=000020of=000020null=000020trace=000020}Let
$\Omega$ be a bounded open set and $\partial\Omega$ is $\mathcal{C}^{1}$,
$p\in[1,\infty)$, $v\in\mathcal{C}^{1,*}(\Omega)$, $s:\pi_{1}\ra\R_{\geq0}$
 be a map satisfying (\ref{eq:=000020the=000020map=000020s}). 
\begin{enumerate}
\item If $v$ is bounded and (\ref{eq:2.9}) holds, then $X_{s,v,0}^{1,p}(\Omega)=W_{s,v,0}^{1,p,1}(\Omega).$
\item If $v\in\mathcal{C}_{b}^{1}(\Omega)$ then $X_{s,v,0}^{1,p}(\Omega)=W_{s,v,0}^{1,p,2}(\Omega).$ 
\end{enumerate}
\end{thm}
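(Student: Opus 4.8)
The plan is to establish both identities as instances of the general fact that the closure $X_{s,v,0}^{1,p}(\Omega)=\overline{\mathcal{D}(\Omega)}$ coincides with the kernel of the relevant trace operator. One inclusion is soft. Every $\phi\in\mathcal{D}(\Omega)$ has compact support, so $\mathrm{Tr}_1(\phi)=\phi|_{\partial\Omega}=0$ and $\mathrm{Tr}_2(\phi)=(v^2\phi)|_{\partial\Omega}=0$, while the trace operators are continuous on $W_{s,v}^{1,p}(\Omega)$ by \eqref{eq:cont=000020trace=0000201} and \eqref{eq:=000020cont=000020trace=0000202} of Rem.~\ref{rem:trace}. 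Since $\mathrm{Ker}(\mathrm{Tr}_i)$ is closed (Prop.~\ref{prop:Banach=000020space=000020}), passing to closures gives $X_{s,v,0}^{1,p}(\Omega)\subseteq W_{s,v,0}^{1,p,i}(\Omega)$ in both cases.

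For the reverse inclusion I would fix $f$ in the appropriate null-trace space and approximate it by test functions through the intermediate sequence $(\chi_n f)$. By Thm.~\ref{thm:density} we have $\chi_n f\to f$ in $W_{s,v}^{1,p}(\Omega)$, and by Thm.~\ref{thm:=000020multiplication=000020chi_n} each $\chi_n f$ already lies in the ordinary space $W^{1,p}(\Omega)$; the hypothesis \eqref{eq:bounded=000020derivatives=000020of=000020v} required there is available in both settings, since under \eqref{eq:2.9} the sets $M_n$ eventually sit inside a fixed compact subset of $\Omega$ on which $v\in\mathcal{C}^1$, and under $v\in\mathcal{C}_b^1(\Omega)$ the first derivatives of $v$ are globally bounded. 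The crux is then to show that each $\chi_n f$ belongs to the classical null-trace space $W_0^{1,p}(\Omega)=\overline{\mathcal{D}(\Omega)}^{W^{1,p}}$.

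Here the two cases diverge, and I expect case (2) to be the main obstacle. In case (1), assumption \eqref{eq:2.9} forces $\chi_n\equiv 1$ on a neighborhood of $\partial\Omega$ for $n$ large, so $\chi_n f$ and $f$ agree near the boundary and hence share the same $W^{1,p}$-trace, namely $\mathrm{Tr}_1(f)=0$; thus $\chi_n f\in W_0^{1,p}(\Omega)$. In case (2) the relation between $\mathrm{Tr}_2(f)=(v^2 f)|_{\partial\Omega}=0$ and $\chi_n f$ is indirect, and the device I would use is the factorization $\chi_n f=(\chi_n v^{-2})(v^2 f)$. On $\mathrm{supp}(\chi_n)\subseteq M_{2n}^c$ one has $|v|>1/(2n)$, so $\chi_n v^{-2}$ extends by $0$ across $\{v=0\}$ (where $\chi_n$ vanishes on a neighborhood) to a function of $\mathcal{C}_b^1(\Omega)$, whereas $v^2 f\in W^{1,p}(\Omega)$ with vanishing trace by Prop.~\ref{prop:continuous=000020embedding=000020W^mp}, i.e.~$v^2 f\in W_0^{1,p}(\Omega)$. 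Since multiplication by a $\mathcal{C}_b^1$ function maps $W_0^{1,p}(\Omega)$ into itself (approximate $v^2 f$ by test functions, multiply, and note the products are compactly supported $\mathcal{C}^1$ functions), it follows that $\chi_n f\in W_0^{1,p}(\Omega)$ in this case as well.

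Once $\chi_n f\in W_0^{1,p}(\Omega)$ is secured, the argument closes uniformly. I would choose $\phi_{n,k}\in\mathcal{D}(\Omega)$ with $\phi_{n,k}\to\chi_n f$ in $W^{1,p}(\Omega)$; because $v$ is bounded in both cases, one has $\|g\|_{W_{s,v}^{1,p}}\le C\|g\|_{W^{1,p}}$ for $g\in W^{1,p}(\Omega)$ (using $D_v^\alpha g=D^\alpha g$ from Prop.~\ref{prop:comparison=000020with=000020weak=000020derivative=000020} together with the boundedness of the weights $|v|^{s(\alpha)+1}$), so $\phi_{n,k}\to\chi_n f$ in $W_{s,v}^{1,p}(\Omega)$ too. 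Combined with $\chi_n f\to f$, a diagonal extraction produces a sequence in $\mathcal{D}(\Omega)$ converging to $f$ in $W_{s,v}^{1,p}(\Omega)$, giving $f\in X_{s,v,0}^{1,p}(\Omega)$ and completing both identities.
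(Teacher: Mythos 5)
Your proposal is correct and its skeleton coincides with the paper's proof: the soft inclusion $X_{s,v,0}^{1,p}(\Omega)\subseteq W_{s,v,0}^{1,p,i}(\Omega)$ via vanishing traces of test functions and continuity of $\mathrm{Tr}_i$, and the reverse inclusion via the chain $\chi_n f\in W_0^{1,p}(\Omega)$, then $\chi_n f\to f$ in $W_{s,v}^{1,p}(\Omega)$ by Thm.~\ref{thm:density}, then density of $\mathcal{D}(\Omega)$ in $W_0^{1,p}(\Omega)$ transferred back using the boundedness of $v$. The only genuine divergence is how you certify $\chi_n f\in W_0^{1,p}(\Omega)$ in case (2): the paper argues that the trace of $\chi_n f$ vanishes piecewise on $\partial\Omega$ --- zero on $\partial M_{2n}\cap\partial\Omega$ because $\chi_n\equiv 0$ on $M_{2n}$, and zero on $\partial M_{2n}^{c}\cap\partial\Omega$ by writing $\chi_n f=(v^2 f)(\chi_n v^{-2})$ and using $\mathrm{Tr}_2(f)=0$ together with $|v^{-2}|\le (2n)^2$ on $M_{2n}^{c}$ --- whereas you globalize the same factorization: $\chi_n v^{-2}$ extends by zero across $\{v=0\}$ to an element of $\mathcal{C}_b^1(\Omega)$, $v^2 f\in W_0^{1,p}(\Omega)$ because its trace vanishes and $\partial\Omega$ is $\mathcal{C}^1$, and multiplication by a $\mathcal{C}_b^1(\Omega)$ function preserves $W_0^{1,p}(\Omega)$. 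Your variant buys rigor: it avoids the paper's informal splitting of $\partial\Omega$ and the implicit localization of the trace operator, at the modest cost of checking the multiplier bounds for $\chi_n v^{-2}$. One caveat you share with the paper: both proofs invoke Thm.~\ref{thm:density}, whose hypothesis (\ref{eq:=000020assumption=000020on=000020w}) is neither listed among the hypotheses of the present theorem nor verified in either argument (when $s=|\cdot|$ it reduces to a uniform-in-$n$ form of (\ref{eq:bounded=000020derivatives=000020of=000020v}), which does hold in both settings, but for general $s$ it is a genuine extra assumption); since this is inherited from the paper's own proof, it is not a defect introduced by your argument.
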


\begin{proof}
For $i=1$, $2$, the inclusion $X_{s,v,0}^{1,p}(\Omega)\subseteq W_{s,v,0}^{1,p,i}(\Omega)$
follows from the fact that $\text{Tr}_{i}(\phi)=0$ for all $\phi\in\mathcal{D}(\Omega)$,
and $\text{Tr}_{i}$ is continuous. 

We deal now with the opposite inclusion. Claim first that $\chi_{n}f\in W_{0}^{1,p}(\Omega)$
and that the sequence $(\chi_{n}f)$ converges to $f$ in $W_{s,v}^{1,p}(\Omega)$
for all $f\in W_{s,v,0}^{1,p,i}(\Omega)$ ($i=1$, $2$). Indeed,
in case (\ref{eq:2.9}) holds, the first part of the claim follows
from the definition of the space $W_{s,v,0}^{1,p,1}(\Omega)$ (see
also \ref{enu:First-possibility} of Rem.~\ref{rem:trace}). In case
$v\in\mathcal{C}_{b}^{1}(\Omega)$, $\chi_{n}f\in W^{1,p}(\Omega)$
by Thm.~(\ref{thm:=000020multiplication=000020chi_n}). Moreover,
we have that $(f\chi_{n})|_{\partial M_{2n}\cap\partial\Omega}=0$
because $\chi_{n}\equiv0$ on $M_{2n}$, and $(f\chi_{n})|_{\partial M_{2n}^{c}\cap\partial\Omega}=(fv^{2}\chi_{n}v^{-2})|_{\partial M_{2n}^{c}\cap\partial\Omega}=0$
because $|v^{-2}|\leq(2n)^{2}$ on $M_{2n}^{c}$, which proves that
$f\chi_{n}\in W_{0}^{1,p}(\Omega)$, and hence the first part of the
claim is proved. The second part of the claim is a consequence of
Thm.~\ref{thm:density}. Finally, by density of $\mathcal{D}(\Omega)$
in $W_{0}^{1,p}(\Omega)$, and since $v$ is bounded, we conclude
that $\mathcal{D}(\Omega)$ is dense $W_{s,v,0}^{1,p,i}(\Omega)$. 
\end{proof}
In the next lemma, we consider a functional of the form $X_{s,v,0}^{1,p}(\Omega)\ra\R$,
$f\ra\int_{\Omega}gD^{\alpha}(v^{3}f)\,\diff x$ where $v\in\mathcal{C}^{1,*}(\Omega)$,
$s:\pi_{1}\ra\R_{\geq0}$ is a map satisfying (\ref{eq:=000020the=000020map=000020s}),
and $g\in W_{s,v}^{1,p'}(\Omega)$ ($p'$ is the exponent conjugate
to $p$). We show in particular that 
\begin{equation}
\forall f\in X_{s,v,0}^{1,p}(\Omega)\,\forall\alpha\in\pi_{1}^{0}:\,\int_{\Omega}gD^{\alpha}(v^{3}f)\,\diff x=-\int_{\Omega}v^{3}fD_{v}^{\alpha}g\,\diff x\label{eq:IPP=0000202}
\end{equation}
which implies that the functional $f\ra\int_{\Omega}gD^{\alpha}(v^{3}f)\,\diff x$
is continuous. As far as we know, the latter functional and the integration
by part formula (\ref{eq:IPP=0000202}) was not considered in the
literature. 
\begin{lem}
\label{lem:Ibp}Let $\Omega\subseteq\R^{d}$ be a bounded open set,
$v\in\mathcal{C}^{1,*}(\Omega)$, $s:\pi_{1}\ra\R_{\geq0}$  be a
map satisfying (\ref{eq:=000020the=000020map=000020s}). Assume that 
\begin{enumerate}
\item \label{enu:=000020IBP1}$v\in L^{\infty}(\Omega)$ 
\item $a\in L_{v^{-3}}^{\infty}(\Omega)$ with $av^{-2}\in\mathcal{C}^{1}(\Omega)$
and $\partial^{\alpha}a\in L_{v^{-2}}^{\infty}(\Omega)$ for all $\alpha\in\pi_{1}^{0}$.
\end{enumerate}
Then 
\[
\forall q\in[1,\infty)\,\forall g\in W_{s,v}^{1,q}(\Omega):\,ag\in W^{1,q}(\Omega)\,\,\,\text{and}\,\,\,D^{\alpha}(ag)=g\partial^{\alpha}a+aD_{v}^{\alpha}g.
\]
Moreover,
\begin{equation}
\forall h\in W_{s,v}^{1,p}(\Omega)\,\forall f\in X_{s,v,0}^{1,p'}(\Omega):\int_{\Omega}hD^{\alpha}(af)\,\diff x=-\int_{\Omega}afD_{v}^{\alpha}h\,\diff x\label{eq:IPP}
\end{equation}
for all $p\in(1,\infty)\,$, where $p'$ is the exponent conjugate
to $p$. 
\end{lem}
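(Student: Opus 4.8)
The plan is to deduce both assertions from the single algebraic identity $a\phi=v^{2}\cdot(av^{-2}\phi)$, which, since $av^{-2}\in\mathcal{C}^{1}(\Omega)$, lets me trade the $v$-weak derivative $D_{v}^{\alpha}$ for the ordinary weak derivative whenever I pair against a function of the form $av^{-2}\phi$ with $\phi\in\mathcal{D}(\Omega)$. Throughout I will use that the two hypotheses on $a$ amount to the pointwise bounds $|a|\le C|v|^{3}$ and $|\partial^{\alpha}a|\le C|v|^{2}$ a.e. on $\Omega$, that $v\in L^{\infty}(\Omega)$, and that $s(\alpha)\le|\alpha|=1$ for $\alpha\in\pi_{1}^{0}$ by \eqref{eq:=000020the=000020map=000020s}; together these force every power of $v$ that appears to be nonnegative and bounded, which is what drives the Hölder estimates below.

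First I would prove the product rule. Fix $\alpha\in\pi_{1}^{0}$, $q\in[1,\infty)$ and $g\in W_{s,v}^{1,q}(\Omega)$, so $vg\in L^{q}(\Omega)$ and $|v|^{s(\alpha)+1}D_{v}^{\alpha}g\in L^{q}(\Omega)$. The three candidate terms are then in $L^{q}(\Omega)$: $|ag|\le C|v|^{2}\,|vg|$, $|g\partial^{\alpha}a|\le C|v|\,|vg|$, and $|aD_{v}^{\alpha}g|\le C|v|^{2-s(\alpha)}\,|v|^{s(\alpha)+1}|D_{v}^{\alpha}g|$, where the prefactors $|v|^{2}$, $|v|$, $|v|^{2-s(\alpha)}$ are bounded since $v\in L^{\infty}(\Omega)$ and $2-s(\alpha)\ge0$. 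Hence $ag\in L^{q}(\Omega)\subseteq L_{\mathrm{loc}}^{1}(\Omega)$ and the proposed derivative lies in $L^{q}(\Omega)$. To identify it, for $\phi\in\mathcal{D}(\Omega)$ put $\psi:=av^{-2}\phi\in\mathcal{C}_{c}^{1}(\Omega)$, an admissible test function in \eqref{eq:=000020alpha-Weak=000020=000020derivative} by item \ref{enu:=000020test=000020functions=000020} of Rem.~\ref{rem:weak=000020alpha=000020derivative=000020}. Writing $a\partial^{\alpha}\phi=\partial^{\alpha}(v^{2}\psi)-\phi\partial^{\alpha}a$ and inserting the definition of $D_{v}^{\alpha}g$ gives
\[
\int_{\Omega}ag\,\partial^{\alpha}\phi\,\diff x=\int_{\Omega}g\,\partial^{\alpha}(v^{2}\psi)\,\diff x-\int_{\Omega}g\phi\,\partial^{\alpha}a\,\diff x=-\int_{\Omega}\big(aD_{v}^{\alpha}g+g\partial^{\alpha}a\big)\phi\,\diff x,
\]
which is precisely $D^{\alpha}(ag)=g\partial^{\alpha}a+aD_{v}^{\alpha}g$ and establishes $ag\in W^{1,q}(\Omega)$.

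For the integration-by-parts formula \eqref{eq:IPP} I would first settle the case $f=\phi\in\mathcal{D}(\Omega)$ and then extend by continuity. For smooth $\phi$ the product $a\phi=v^{2}\psi$ is itself in $\mathcal{C}_{c}^{1}(\Omega)$ (it is a product of the $\mathcal{C}^{1}$ function $v^{2}$ and $\psi\in\mathcal{C}_{c}^{1}(\Omega)$), so $D^{\alpha}(a\phi)=\partial^{\alpha}(a\phi)=\partial^{\alpha}(v^{2}\psi)$, and applying the definition of $D_{v}^{\alpha}h$ with test function $\psi$ yields $\int_{\Omega}hD^{\alpha}(a\phi)\,\diff x=-\int_{\Omega}v^{2}\psi D_{v}^{\alpha}h\,\diff x=-\int_{\Omega}a\phi D_{v}^{\alpha}h\,\diff x$, i.e.~\eqref{eq:IPP} on $\mathcal{D}(\Omega)$. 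It then remains to check that, for fixed $h\in W_{s,v}^{1,p}(\Omega)$, both sides are continuous linear functionals of $f$ on $W_{s,v}^{1,p'}(\Omega)$; since $\mathcal{D}(\Omega)$ is dense in $X_{s,v,0}^{1,p'}(\Omega)$ by definition, the identity then propagates to every $f\in X_{s,v,0}^{1,p'}(\Omega)$. The right-hand side is controlled by $|afD_{v}^{\alpha}h|\le C|vf|\,|v|^{s(\alpha)+1}|D_{v}^{\alpha}h|$, so Hölder with conjugate exponents $p',p$ bounds it by $C\|f\|_{W_{s,v}^{1,p'}}\|h\|_{W_{s,v}^{1,p}}$; the left-hand side, expanded by the product rule as $\int_{\Omega}hf\partial^{\alpha}a\,\diff x+\int_{\Omega}haD_{v}^{\alpha}f\,\diff x$, is bounded identically using $|hf\partial^{\alpha}a|\le C|vh||vf|$ and $|haD_{v}^{\alpha}f|\le C|vh|\,|v|^{s(\alpha)+1}|D_{v}^{\alpha}f|$. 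The restriction $p\in(1,\infty)$ enters exactly here, so that both conjugate exponents are finite.

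The genuinely delicate point is not any single step but the bookkeeping of the powers of $v$: everything balances because the gap $|a|\le C|v|^{3}$ supplies one factor $|v|$ to pair with $h$ or $g$ (turning them into the integrable $vh$, $vg$), one factor $|v|^{s(\alpha)+1}$ to pair with the weighted derivative, and a leftover $|v|^{1-s(\alpha)}$ that is bounded thanks to $s(\alpha)\le1$ and $v\in L^{\infty}(\Omega)$. I expect the main care to go into verifying that $\psi=av^{-2}\phi$ really is an admissible $\mathcal{C}_{c}^{1}(\Omega)$ test function and that the Hölder constants are independent of $f$, so that the density extension is legitimate; the algebraic manipulations themselves are the same Leibniz-type computation already carried out in Prop.~\ref{prop:continuous=000020embedding=000020W^mp} and Thm.~\ref{thm:=000020multiplication=000020chi_n}.
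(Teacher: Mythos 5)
Your proof is correct and takes essentially the same approach as the paper: the same test-function substitution $\psi=av^{-2}\phi\in\mathcal{C}_{c}^{1}(\Omega)$ justified by \ref{enu:=000020test=000020functions=000020} of Rem.~\ref{rem:weak=000020alpha=000020derivative=000020}, the same pointwise bounds $|a|\leq C|v|^{3}$ and $|\partial^{\alpha}a|\leq C|v|^{2}$ feeding the same three H\"older estimates, and the same use of the density of $\mathcal{D}(\Omega)$ in $X_{s,v,0}^{1,p'}(\Omega)$. The only cosmetic difference is organizational: you establish (\ref{eq:IPP}) on $\mathcal{D}(\Omega)$ and extend by continuity of both sides, whereas the paper passes to the limit along an approximating sequence $\phi_{n}\to f$ and then invokes the product rule to recombine the terms.
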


\begin{proof}
The proof of the first part of the lemma is very similar to the proof
of Thm.~\ref{prop:continuous=000020embedding=000020W^mp}. We prove
now (\ref{eq:IPP}). By definition of $X_{s,v,0}^{1,p'}$, there exists
a sequence $(\phi_{n})$ of $\mathcal{D}(\Omega)$ that converges
to $f$ in $W_{s,v}^{1,p'}(\Omega)$. Since $h\in W_{s,v}^{1,p}(\Omega)$,
we have 
\[
\int_{\Omega}h\phi_{n}\partial^{\alpha}a\,\diff x+\int_{\Omega}ha\partial^{\alpha}\phi_{n}\,\diff x=\int_{\Omega}h\partial^{\alpha}(a\phi_{n})\,\diff x=-\int_{\Omega}a\phi_{n}D_{v}^{\alpha}h\,\diff x
\]
\[
\,
\]
for all $n$ and for all $\alpha\in\pi_{1}^{0}$, where we simply
used the definition of the weak derivative in the sense of $L_{v,\mathrm{loc}}^{1}(\Omega)$
with the test function $av^{-2}\phi_{n}\in\mathcal{C}_{0}^{1}(\Omega)$.
By letting $n$ tends to $\infty$ we get 
\begin{equation}
\int_{\Omega}hf\partial^{\alpha}a\,\diff x+\int_{\Omega}haD_{v}^{\alpha}f\,\diff x=-\int_{\Omega}afD_{v}^{\alpha}h\,\diff x\label{eq:2.12}
\end{equation}
since 
\[
\left|\int_{\Omega}h\partial^{\alpha}a(\phi_{n}-f)\,\diff x\right|\leq C||\partial^{\alpha}a||_{L_{v^{-2}}^{\infty}(\Omega)}||hv||_{L^{p}(\Omega)}||v(\phi_{n}-f)||_{L^{p'}(\Omega)},
\]
\[
\left|\int_{\Omega}ha(\partial^{\alpha}\phi_{n}-D_{v}^{\alpha}f)\,\diff x\right|\leq C||a||_{L_{v^{-3}}^{\infty}(\Omega)}||hv||_{L^{p}(\Omega)}||(\partial^{\alpha}\phi_{n}-D_{v}^{\alpha}f)|v|^{s(\alpha)+1}||_{L^{p'}(\Omega)},
\]
and 
\[
\left|\int_{\Omega}a(\phi_{n}-f)D_{v}^{\alpha}h\,\diff x\right|\leq C||a||_{L_{v^{-3}}^{\infty}(\Omega)}|||v|^{s(\alpha)+1}D_{v}^{\alpha}h||_{L^{p}(\Omega)}||v(\phi_{n}-f)||_{L^{p'}(\Omega)}.
\]
Using now (\ref{eq:2.12}) and the first part of the lemma with $g=f$
we obtain (\ref{eq:IPP}). 
\end{proof}
\begin{rem}
~
\begin{enumerate}
\item When $v\in\mathcal{C}_{b}^{1}(\Omega)$, the function $a$ in the
previous lemma can be any function of the form $a=\tilde{a}v^{n}$
where $\tilde{a}\in\mathcal{C}_{b}^{1}(\Omega)$ and $n\in\N$ with
$n\geq3$. 
\item Clearly, the first part of the lemma can be proved with less assumptions
on $\Omega$, $v$ and $a$. 
\end{enumerate}
\end{rem}

\section{\protect\label{sec:Poincar=0000E9-inequality}Poincaré inequality}

In this section we are interesting in proving a Poincaré inequality
for the weighted Sobolev space $X_{s,v,0}^{1,p}(\Omega)$. 

We first derive an interesting inequality where we prove that the
term $||f\nabla(v^{2})||_{L^{p}}$ is bounded by $||v^{2}D_{v}f||_{L^{p}}$.
We use then this inequality to prove an interesting Poincaré inequality
under the assumption $v\in\mathcal{C}_{b}^{1,*}(\Omega):=\mathcal{C}_{b}^{1}(\Omega)\cap\mathcal{C}^{1,*}(\Omega)$. 
\begin{thm}
\label{thm:Kebiche=000020inequality=000020}Let $\Omega\subseteq\R^{d}$
($d\geq2$) be a bounded open set, $p\in[1,d)$, $v\in\mathcal{C}_{b}^{1,*}(\Omega)$,
$s:\pi_{1}\ra\R_{\geq0}$ be a map satisfying (\ref{eq:=000020the=000020map=000020s}).
Assume that 
\begin{equation}
\exists\sigma>0\,\forall x\neq0:\,|\nabla v(x)|_{p}:=\left(\sum_{i=1}^{d}|\partial_{i}v(x)|^{p}\right)^{1/p}\leq\sigma\frac{|v(x)|}{|x|}\label{eq:=000020w'<w/x}
\end{equation}
where $|x|$ is the euclidean norm of $\R^{d}$, and that 
\begin{equation}
0<\frac{2\sigma p}{d-p}<1.\label{eq:dimension}
\end{equation}
Then, 
\begin{equation}
\forall f\in X_{s,v,0}^{1,p}(\Omega):\,||f\nabla(v^{2})||_{L^{p}}\leq\frac{2\sigma p}{d-p-2\sigma p}||v^{2}D_{v}f||_{L^{p}}.\label{eq:Kebiche=000020inequality}
\end{equation}
\end{thm}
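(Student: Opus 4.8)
The plan is to prove (\ref{eq:Kebiche=000020inequality}) first for $f\in\mathcal{D}(\Omega)$ and then to pass to a general $f\in X_{s,v,0}^{1,p}(\Omega)$ by density. For the density step I would observe that both sides of (\ref{eq:Kebiche=000020inequality}) are seminorms dominated by $\norm{\cdot}_{W_{s,v}^{1,p}}$: since $v\in\mathcal{C}_{b}^{1}(\Omega)$ one has $\norm{f\nabla(v^{2})}_{L^{p}}\le 2\norm{\nabla v}_{L^{\infty}}\norm{fv}_{L^{p}}$, and for $|\alpha|=1$ the bound $0\le s(\alpha)\le 1$ gives $v^{2}=|v|^{1-s(\alpha)}w_{\alpha}$ with $|v|^{1-s(\alpha)}$ bounded, so $\norm{v^{2}D_{v}^{\alpha}f}_{L^{p}}\le C\norm{D_{v}^{\alpha}f}_{L_{w_{\alpha}}^{p}}$. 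As $X_{s,v,0}^{1,p}(\Omega)$ is by definition the closure of $\mathcal{D}(\Omega)$ in $W_{s,v}^{1,p}(\Omega)$, the inequality extends automatically once it is known on $\mathcal{D}(\Omega)$. For $f\in\mathcal{D}(\Omega)$, assertion (\ref{enu:=000020comp1}) of Prop.~\ref{prop:comparison=000020with=000020weak=000020derivative=000020} identifies $D_{v}^{\alpha}f$ with the classical derivative $\partial^{\alpha}f$, so it suffices to prove, for smooth compactly supported $f$,
\[
\norm{f\nabla(v^{2})}_{L^{p}}\le\frac{2\sigma p}{d-p-2\sigma p}\,\norm{v^{2}\nabla f}_{L^{p}}.
\]

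The next step is a pointwise estimate coming directly from the hypothesis (\ref{eq:=000020w'<w/x}): since $\nabla(v^{2})=2v\nabla v$, one gets $|\nabla(v^{2})|\le 2|v|\,|\nabla v|\le 2\sigma v^{2}/|x|$ a.e., whence
\[
\norm{f\nabla(v^{2})}_{L^{p}}\le 2\sigma\,\norm{\tfrac{v^{2}f}{|x|}}_{L^{p}}.
\]
Setting $F:=v^{2}f$, which lies in $\mathcal{C}_{c}^{1}(\Omega)$ (product of a $\mathcal{C}_{c}^{\infty}$ function with a $\mathcal{C}^{1}$ one) and extends by $0$ to $\R^{d}$, I would then invoke the classical Hardy inequality: because $p<d$,
\[
\norm{\tfrac{F}{|x|}}_{L^{p}}\le\frac{p}{d-p}\,\norm{\nabla F}_{L^{p}}.
\]
This I would either cite or prove inline from the identity $\mathrm{div}(x|x|^{-p})=(d-p)|x|^{-p}$ followed by an integration by parts and Hölder; the hypothesis $d>p$ makes $|x|^{-p}$ locally integrable and forces the spherical boundary term at the origin (of order $\varepsilon^{d-p}$) to vanish, so $F$ is an admissible competitor despite the singularity of $1/|x|$.

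Applying the Leibniz rule $\nabla F=v^{2}\nabla f+f\nabla(v^{2})$ and the triangle inequality gives $\norm{\nabla F}_{L^{p}}\le\norm{v^{2}\nabla f}_{L^{p}}+\norm{f\nabla(v^{2})}_{L^{p}}$. Writing $A:=\norm{f\nabla(v^{2})}_{L^{p}}$, $B:=\norm{v^{2}\nabla f}_{L^{p}}$ and $k:=\tfrac{2\sigma p}{d-p}$, the three displays chain into the self-improving bound $A\le k(A+B)$. For smooth compactly supported $f$ the quantity $A$ is finite (again because $v\in\mathcal{C}_{b}^{1}$), and (\ref{eq:dimension}) guarantees $k\in(0,1)$, so $A$ may be absorbed into the left-hand side: $A(1-k)\le kB$, i.e. $A\le\frac{k}{1-k}B=\frac{2\sigma p}{d-p-2\sigma p}B$, which is exactly (\ref{eq:Kebiche=000020inequality}).

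The delicate point is the absorption step, which works only because $A<\infty$ a priori — this is precisely why the argument must be run on $\mathcal{D}(\Omega)$ first rather than directly on $X_{s,v,0}^{1,p}(\Omega)$ — and because (\ref{eq:dimension}) yields the strict inequality $k<1$, without which one could not divide by $1-k$. A secondary technical obstacle is matching the norm conventions so that Hardy's constant is exactly $p/(d-p)$: the hypothesis is phrased with the $\ell^{p}$ gradient norm $|\nabla v|_{p}$, whereas the divergence identity behind Hardy naturally produces the Euclidean radial derivative $\tfrac{x}{|x|}\cdot\nabla F$, so the estimate $|x\cdot\nabla F|\le|x|\,|\nabla F|$ and the comparison of the two vector norms must be carried out consistently with the convention fixed for $\norm{\cdot}_{L^{p}}$ on vector fields.
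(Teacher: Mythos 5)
Your proposal is correct, and its mathematical core coincides with the paper's proof: the pointwise bound $|f\nabla(v^{2})|\le 2\sigma v^{2}|f|/|x|$ from (\ref{eq:=000020w'<w/x}), Hardy's inequality with constant $\tfrac{p}{d-p}$ applied to $v^{2}f$, the Leibniz splitting $\nabla(v^{2}f)=v^{2}\nabla f+f\nabla(v^{2})$, and absorption of the term $\|f\nabla(v^{2})\|_{L^{p}}$ using (\ref{eq:dimension}). The difference is in how the limiting argument is packaged. The paper works directly with an arbitrary $f\in X_{s,v,0}^{1,p}(\Omega)$, invoking Thm.~\ref{thm:=000020equality=000020of=000020spaces=000020of=000020null=000020trace=000020} to assert $v^{2}f\in W_{0}^{1,p}(\Omega)$ together with the Leibniz formula, and then applies Hardy's inequality in $W_{0}^{1,p}(\Omega)$; note that the cited theorem formally carries a $\mathcal{C}^{1}$-boundary hypothesis that is not among the assumptions of the present statement, so strictly speaking the paper's route leans on machinery (trace operators, the density theorem) whose hypotheses must be checked or replaced by the direct argument that $v^{2}\phi_{n}\to v^{2}f$ in $W^{1,p}(\Omega)$ for an approximating sequence $\phi_{n}\in\mathcal{D}(\Omega)$, via Prop.~\ref{prop:continuous=000020embedding=000020W^mp}. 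Your route --- prove the inequality for $f\in\mathcal{D}(\Omega)$, where $D_{v}^{\alpha}f=\partial^{\alpha}f$ and Hardy for $\mathcal{C}_{c}^{1}$ functions extended by zero is classical, then pass to the closure using that both sides are seminorms continuous for $\|\cdot\|_{W_{s,v}^{1,p}}$ --- needs only the definition of $X_{s,v,0}^{1,p}(\Omega)$ and is therefore more self-contained; it also makes the a priori finiteness needed for the absorption step trivial, whereas in the paper's version this finiteness (which does hold, since $v\in\mathcal{C}_{b}^{1,*}(\Omega)$ and $fv\in L^{p}(\Omega)$) is used silently. One point you should still nail down in a final write-up, which you correctly flag but do not resolve: the paper's convention $\|f\nabla(v^{2})\|_{L^{p}}:=2\bigl(\sum_{i}\|fv\partial_{i}v\|_{L^{p}}^{p}\bigr)^{1/p}$ uses the $\ell^{p}$ norm on gradients, matching $|\nabla v|_{p}$ in (\ref{eq:=000020w'<w/x}), while the sharp Hardy constant $\tfrac{p}{d-p}$ arises from the Euclidean bound $|x\cdot\nabla F|\le|x|\,|\nabla F|_{2}$; for $p>2$ one has $|\cdot|_{2}\le d^{\frac{1}{2}-\frac{1}{p}}|\cdot|_{p}$ rather than $|\cdot|_{2}\le|\cdot|_{p}$, so either the vector-norm convention must be kept Euclidean throughout or a dimensional constant tracked --- an issue the paper's own proof glosses over in exactly the same way.
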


\begin{proof}
Let $f\in X_{s,v,0}^{1,p}(\Omega)$. By Thm.~\ref{thm:=000020equality=000020of=000020spaces=000020of=000020null=000020trace=000020}
we have 
\[
fv^{2}\in W_{0}^{1,p}(\Omega)\,\text{with }D^{\alpha}(fv^{2})=v^{2}D_{v}^{\alpha}f+f\partial^{\alpha}v^{2},\,\,\,\forall\alpha\in\pi_{1}^{0}.
\]
(See also Prop.~\ref{prop:continuous=000020embedding=000020W^mp}.)
Using (\ref{eq:=000020w'<w/x}) and Hardy inequality we get 
\begin{align*}
||f\nabla(v^{2})||_{L^{p}(\Omega)} & :=2\left(\sum_{i=1}^{d}||fv\partial_{i}v||_{L^{p}(\Omega)}^{p}\right)^{\frac{1}{p}}\\
 & \leq2\sigma\left\Vert \frac{fv^{2}}{x}\right\Vert _{L^{p}(\Omega)}\\
 & \leq2\sigma\frac{p}{d-p}\left\Vert D(fv^{2})\right\Vert _{L^{p}(\Omega)}\\
 & \leq2\sigma\frac{p}{d-p}\left\Vert v^{2}D_{v}f\right\Vert _{L^{p}(\Omega)}+2\sigma\frac{p}{d-p}\left\Vert f\nabla(v^{2})\right\Vert _{L^{p}(\Omega)}
\end{align*}
This inequality together with (\ref{eq:dimension}) gives (\ref{eq:Kebiche=000020inequality}). 
\end{proof}
\begin{example}
The typical example of a function satisfying (\ref{eq:=000020w'<w/x})
when $p\leq2$ is the function $v(x)=|x|^{\beta}$ with $\beta\geq2$,
where $\sigma=\beta d^{\frac{2-p}{2}}$.
\end{example}

Similarly, we have 
\begin{thm}
Let $\Omega\subseteq\R$ be an open set, $p\in(1,\infty)$, $v\in\mathcal{C}^{1,*}(\Omega)$
with $v'\in L^{\infty}(\Omega)$, and let $s:\pi_{1}\ra\R_{\geq0}$
be a map satisfying (\ref{eq:=000020the=000020map=000020s}). Assume
that 
\[
\exists\sigma>0\,\forall x\neq0:\,|v'(x)|\leq\sigma\frac{|v(x)|}{|x|},\,\,\,0<\frac{2\sigma p}{p-1}<1.
\]
Then 
\[
\forall f\in X_{s,v,0}^{1,p}(\Omega):\,||f(v^{2})'||_{L^{p}(\Omega)}\leq\frac{2\sigma p}{p-1-2\sigma p}||v^{2}D_{v}f||_{L^{p}(\Omega)}.
\]
\end{thm}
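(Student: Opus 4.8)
The plan is to reduce to test functions and then pass to the limit, mirroring the proof of Theorem~\ref{thm:Kebiche=000020inequality=000020} but with the one-dimensional Hardy inequality replacing its higher-dimensional counterpart. Since $X_{s,v,0}^{1,p}(\Omega)$ is by definition the closure of $\mathcal{D}(\Omega)$ in $W_{s,v}^{1,p}(\Omega)$, it suffices to establish the bound for every $\phi\in\mathcal{D}(\Omega)$ and then extend it by continuity. For such a $\phi$ the weak derivative in the sense of $L^1_{v,\mathrm{loc}}(\Omega)$ coincides with the classical one, i.e.\ $D_v\phi=\phi'$, by part~\ref{enu:=000020comp1} of Proposition~\ref{prop:comparison=000020with=000020weak=000020derivative=000020}; consequently $\phi v^2\in\mathcal{C}^1_c(\Omega)$ and the ordinary Leibniz rule gives $(\phi v^2)'=v^2\phi'+\phi(v^2)'=v^2 D_v\phi+\phi(v^2)'$.

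The core estimate comes from applying the classical one-dimensional Hardy inequality $\|u/x\|_{L^p}\le\tfrac{p}{p-1}\|u'\|_{L^p}$ to $u=\phi v^2$. First I would use the growth hypothesis $|v'|\le\sigma|v|/|x|$ together with $(v^2)'=2vv'$ to get
\[
\|\phi(v^2)'\|_{L^p(\Omega)}=2\|\phi v v'\|_{L^p(\Omega)}\le 2\sigma\Big\|\frac{\phi v^2}{x}\Big\|_{L^p(\Omega)}\le\frac{2\sigma p}{p-1}\,\|(\phi v^2)'\|_{L^p(\Omega)}.
\]
Inserting the Leibniz splitting and the triangle inequality then yields
\[
\|\phi(v^2)'\|_{L^p}\le\frac{2\sigma p}{p-1}\big(\|v^2 D_v\phi\|_{L^p}+\|\phi(v^2)'\|_{L^p}\big),
\]
and the smallness condition $2\sigma p/(p-1)<1$ lets me absorb the last term on the right, producing the claimed constant $\tfrac{2\sigma p}{p-1-2\sigma p}$ for $\phi$.

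To pass from $\mathcal{D}(\Omega)$ to its closure $X_{s,v,0}^{1,p}(\Omega)$, I would verify that both sides are continuous for the $W_{s,v}^{1,p}$-norm. For the left-hand side, $|(v^2)'|=2|v||v'|\le 2\|v'\|_{L^\infty}|v|$ gives $\|(\phi_n-f)(v^2)'\|_{L^p}\le 2\|v'\|_{L^\infty}\|(\phi_n-f)v\|_{L^p}$, which is controlled by the $\alpha=0$ term of the norm. For the right-hand side, in the canonical case $s(\alpha)=1$ the quantity $\|v^2 D_v(\phi_n-f)\|_{L^p}$ is exactly the $|\alpha|=1$ term $\||v|^{s(\alpha)+1}D_v(\phi_n-f)\|_{L^p}$ of the norm. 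Taking $\phi_n\to f$ in $W_{s,v}^{1,p}$ and passing to the limit then transfers the inequality from test functions to all of $X_{s,v,0}^{1,p}(\Omega)$.

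The main obstacle is the Hardy step itself. In one dimension, unlike the regime $p<d$ with $d\ge 2$ exploited in Theorem~\ref{thm:Kebiche=000020inequality=000020}, the kernel $|x|^{-p}$ is no longer locally integrable near the origin, so $\|u/x\|_{L^p}\le\tfrac{p}{p-1}\|u'\|_{L^p}$ is available only once $u=\phi v^2$ vanishes at $x=0$. The entire absorption scheme rests on this, and it is guaranteed precisely when $v(0)=0$ (as for the model weight $v(x)=|x|^\beta$) or when $0\notin\Omega$; in either case $\phi v^2\in\mathcal{C}^1_c(\Omega)$ vanishes at the origin, and the half-line Hardy inequality may be applied on each side of $0$ and summed.
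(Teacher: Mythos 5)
Your argument is, at its core, the same as the paper's: the paper proves this theorem by declaring its proof ``identical'' to that of Thm.~\ref{thm:Kebiche=000020inequality=000020}, namely the chain
\[
\|f(v^2)'\|_{L^p}\le 2\sigma\left\Vert \frac{fv^2}{x}\right\Vert _{L^p}\le\frac{2\sigma p}{p-1}\left\Vert (fv^2)'\right\Vert _{L^p}\le\frac{2\sigma p}{p-1}\left(\|v^2D_vf\|_{L^p}+\|f(v^2)'\|_{L^p}\right)
\]
followed by absorption. The only organizational difference is that you run this chain on $\phi\in\mathcal{D}(\Omega)$ and pass to the closure by continuity, whereas the paper applies it directly to $f$, using $fv^2\in W_0^{1,p}(\Omega)$ together with the Leibniz formula (via Thm.~\ref{thm:=000020equality=000020of=000020spaces=000020of=000020null=000020trace=000020}). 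Your density step is sound, with the small caveat that your continuity argument for the right-hand side covers $s(\alpha)=1$; for $s(\alpha)<1$ and unbounded $v$ one must either assume $v\in L^\infty(\Omega)$ or note that the right-hand side may then be infinite, in which case the inequality is trivial.

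The substantive point is your last paragraph, and there you are more careful than the paper. In dimension one with $p>1$, Hardy's inequality $\|u/x\|_{L^p}\le\frac{p}{p-1}\|u'\|_{L^p}$ holds only for $u$ vanishing at the origin (it is then applied on each half-line and summed); if $u(0)\ne0$ the left-hand side is infinite, so the second inequality in the chain above fails. Nothing in the theorem's hypotheses forces $\phi v^2$ (or $fv^2$) to vanish at $0$: the growth condition and $v'\in L^\infty(\Omega)$ are perfectly compatible with $0\in\Omega$ and $v(0)\ne0$. In fact the statement as printed is false in that generality. Take $\Omega=\R$, $v(x)=(1+x^2)^{\sigma/2}$ (which satisfies $v\in\mathcal{C}^{1,*}(\R)$, $v'\in L^\infty(\R)$, and (\ref{eq:=000020w'<w/x})), $s=|\cdot|$, and $f_\lambda=\phi(\cdot/\lambda)$ with $\phi\in\mathcal{D}(\R)$, $\phi\equiv1$ on $[-1,1]$, $\operatorname{supp}\phi\subseteq[-2,2]$; note $D_vf_\lambda=f_\lambda'$ by \ref{enu:=000020comp1} of Prop.~\ref{prop:comparison=000020with=000020weak=000020derivative=000020}. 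Since $2\sigma p<p-1$ gives $(2\sigma-1)p<-1$, one computes
\[
\|f_\lambda(v^2)'\|_{L^p}\longrightarrow 2\sigma\left\Vert |x|(1+x^2)^{\sigma-1}\right\Vert _{L^p(\R)}\in(0,\infty),\qquad \|v^2D_vf_\lambda\|_{L^p}=O\left(\lambda^{(1-p+2\sigma p)/p}\right)\longrightarrow 0
\]
as $\lambda\to\infty$, so no constant whatsoever can work. Hence the extra hypothesis you impose --- $0\notin\Omega$, or $v(0)=0$ when $0\in\Omega$ --- is not an artifact of your method: it (or something equivalent) is genuinely needed, your proof is correct once it is added, and the paper's appeal to the $d\ge2$ argument (where $|x|^{-p}$ is locally integrable precisely because $p<d$) glosses over exactly this point.
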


The proof is identical to the proof of Thm.~\ref{thm:Kebiche=000020inequality=000020}. 

Using Poincaré inequality and Thm.~\ref{thm:Kebiche=000020inequality=000020}
we get 
\begin{cor}
\label{cor:Poincar=0000E9=000020inequality=000020}Let $\Omega\subseteq\R^{d}$
($d\geq2$) be a bounded open set, $p\in[1,d)$, $v\in\mathcal{C}_{b}^{1,*}(\Omega)$,
$s:\pi_{1}\ra\R_{\geq0}$ be a map satisfying (\ref{eq:=000020the=000020map=000020s}).
Assume that (\ref{eq:=000020w'<w/x}) and (\ref{eq:dimension}) hold.
Then, 
\[
\exists C_{\Omega}>0\,\forall f\in X_{s,v,0}^{1,p}:\,||v^{2}f||_{L^{p}}\leq C_{\Omega}||v^{2}D_{v}f||_{L^{p}}
\]

\end{cor}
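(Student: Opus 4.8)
The plan is to reduce this weighted Poincaré inequality to the classical Poincaré inequality on $W_0^{1,p}(\Omega)$, combined with the Hardy-type bound (\ref{eq:Kebiche=000020inequality}) already obtained in Thm.~\ref{thm:Kebiche=000020inequality=000020}. First I would fix $f\in X_{s,v,0}^{1,p}(\Omega)$. Since $v\in\mathcal{C}_b^{1,*}(\Omega)\subseteq\mathcal{C}_b^{1}(\Omega)$, assertion (2) of Thm.~\ref{thm:=000020equality=000020of=000020spaces=000020of=000020null=000020trace=000020} gives $X_{s,v,0}^{1,p}(\Omega)=W_{s,v,0}^{1,p,2}(\Omega)$, and hence $v^{2}f\in W_0^{1,p}(\Omega)$ together with the Leibniz decomposition
\[
\forall\alpha\in\pi_{1}^{0}:\,D^{\alpha}(v^{2}f)=v^{2}D_{v}^{\alpha}f+f\partial^{\alpha}v^{2}
\]
(cf.~Prop.~\ref{prop:continuous=000020embedding=000020W^mp}). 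This identification is the step that makes the unweighted machinery available on $v^{2}f$.

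Next, since $\Omega$ is bounded and $v^{2}f\in W_0^{1,p}(\Omega)$, the classical Poincaré inequality furnishes a constant $C_{\Omega}>0$, depending only on $\Omega$, $d$ and $p$, with
\[
\|v^{2}f\|_{L^{p}}\leq C_{\Omega}\,\|D(v^{2}f)\|_{L^{p}}.
\]
Any dimensional factor coming from comparing the $\ell^{p}$-combination of component $L^{p}$-norms used throughout the paper with the usual norm of $|\nabla(v^{2}f)|$ is absorbed into $C_{\Omega}$. Applying Minkowski's inequality to the Leibniz formula above then yields
\[
\|D(v^{2}f)\|_{L^{p}}\leq\|v^{2}D_{v}f\|_{L^{p}}+\|f\nabla(v^{2})\|_{L^{p}}.
\]

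The decisive step is to control the last term by Thm.~\ref{thm:Kebiche=000020inequality=000020}: under the hypotheses (\ref{eq:=000020w'<w/x}) and (\ref{eq:dimension}) one has $\|f\nabla(v^{2})\|_{L^{p}}\leq\frac{2\sigma p}{d-p-2\sigma p}\|v^{2}D_{v}f\|_{L^{p}}$. Substituting this into the two displays above and collecting terms gives
\[
\|v^{2}f\|_{L^{p}}\leq C_{\Omega}\left(1+\frac{2\sigma p}{d-p-2\sigma p}\right)\|v^{2}D_{v}f\|_{L^{p}}=C_{\Omega}\,\frac{d-p}{d-p-2\sigma p}\,\|v^{2}D_{v}f\|_{L^{p}},
\]
which is the claimed inequality with Poincaré constant $C_{\Omega}(d-p)/(d-p-2\sigma p)$; assumption (\ref{eq:dimension}) guarantees this constant is positive and finite.

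I do not expect a genuine obstacle here, since all the substantive analytic work is already contained in Thm.~\ref{thm:Kebiche=000020inequality=000020} (the weighted Hardy-type bound) and in the identification $v^{2}f\in W_0^{1,p}(\Omega)$ provided by Thm.~\ref{thm:=000020equality=000020of=000020spaces=000020of=000020null=000020trace=000020}. The only point needing a little care is to verify that the constants from the classical Poincaré inequality and from (\ref{eq:Kebiche=000020inequality}) are genuinely independent of $f$, which they are, so that the combined constant depends only on $\Omega$, $d$, $p$ and $\sigma$.
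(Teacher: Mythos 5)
Your proposal is correct and is essentially the paper's own proof: the paper simply states that the corollary is a direct consequence of the classical Poincaré inequality in $W_0^{1,p}(\Omega)$ and Thm.~\ref{thm:Kebiche=000020inequality=000020}, which is exactly the argument you carry out in detail. Your identification $v^{2}f\in W_0^{1,p}(\Omega)$, the Leibniz--Minkowski splitting of $D(v^{2}f)$, and the absorption of the term $||f\nabla(v^{2})||_{L^{p}}$ via (\ref{eq:Kebiche=000020inequality}) are precisely the intended steps, and your explicit constant $C_{\Omega}(d-p)/(d-p-2\sigma p)$ is positive and finite exactly because of (\ref{eq:dimension}).
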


\begin{proof}
This is a direct consequence of Poincaré inequality in $W_{0}^{1,p}(\Omega)$
and Thm.~\ref{thm:Kebiche=000020inequality=000020}. 
\end{proof}

\section{\protect\label{sec:Degenerate-elliptic-PDE}Degenerate elliptic PDE}

In the sequel, if $\alpha\in\pi_{1}^{0}$, we write $D_{v}^{i}$ instead
of $D_{v}^{\alpha}$ where $i$ is the index of the non-null component
of $\alpha$. We also write $D_{v}$ for the operator $(D_{v}^{i})$. 

We consider in this section, a boundary-value problem of the form
\begin{equation}
\begin{cases}
-\sum_{i,j=1}^{d}D_{j}(a_{ij}D_{i}f)+(b\cdot D)f+cf=h & \text{in }\Omega\\
f=0 & \text{on }\partial\Omega
\end{cases}\label{eq:First=000020PDE}
\end{equation}
where the coefficients $(a_{ij})$, $b=(b_{i})$, $c$ are given functions,
and $h$ is some given functional. In particular, we consider that
the bilinear form $\sum_{i,j=1}^{d}a_{ij}(x)\xi_{i}\xi_{j}$ degenerates
i.e.~is not uniformly elliptic. Let $V:=\{w_{\alpha}\}_{\alpha\in\pi_{1}}$
be a collection of elements of $E_{p}(\Omega)$, and $v\in\mathcal{C}^{1,*}(\Omega)$
that satisfy (\ref{eq:=000020w^=00005Calpha<=000020w_=00005Calpha}),
and $h$ be a continuous functional on $X_{V,v,0}^{1}(\Omega)$ ($:=X_{V,v,0}^{1,2}(\Omega)$).
We say that $f\in X_{V,v,0}^{1}(\Omega)$ is a weak solution to the
boundary-value problem (\ref{eq:First=000020PDE}) if 
\begin{equation}
\forall g\in X_{V,v,0}^{1}(\Omega):\,B_{v}(f,g)=h(g)\label{eq:weak=000020formulation}
\end{equation}
where $B_{v}(\cdot,\cdot):X_{V,v,0}^{1}(\Omega)\times X_{V,v,0}^{1}(\Omega)\ra\R$
is the bilinear form given by 
\begin{equation}
B_{v}(f,g):=\sum_{i,j=1}^{d}\int_{\Omega}a_{ij}D_{v}^{i}fD_{v}^{j}g\,\diff x+\int_{\Omega}g(b\cdot D_{v})f\,\diff x+\int_{\Omega}cfg\,\diff x.\label{eq:Bilnear=000020form}
\end{equation}

Assume that $f\in X_{V,v,0}^{1}(\Omega)$ is a weak solution to (\ref{eq:First=000020PDE})
and assume that 
\begin{equation}
\forall i,j=1,...,d:\,a_{ij},\,b_{i},\,c\in L_{\mathrm{loc}}^{\infty}(\Omega_{|v|^{2p}}^{*})\,\text{ and }\,h\in L_{\mathrm{loc}}^{2}(\Omega_{|v|^{2p}}^{*}).\label{eq:local=000020coeff}
\end{equation}
Then the differential equation of (\ref{eq:First=000020PDE}) holds
in the sense of $\mathcal{D}'(\Omega_{|v|^{2p}}^{*})$. Indeed, we
know that $X_{V,v,0}^{1}(\Omega)\subseteq W^{1,2}(\Omega_{|v|^{2p}}^{*},V)$.
Hence, 
\[
\forall i,j=1,...,d:\,a_{ij}D_{i}f,\,b_{i}D_{i}f,\,cf,\,h\in L_{\text{loc}}^{1}(\Omega_{|v|^{2p}}^{*})\subseteq\mathcal{D}'(\Omega_{|v|^{2p}}^{*}).
\]
The remaining part of the proof is trivial. 

For an in-depth study of weak solution existence to boundary value
problems of the form (\ref{eq:First=000020PDE}), we refer to \cite{KuAn}
and to reference therein. In the next theorem, we give sufficient
assumptions that guarantee the existence of a unique weak solution
in $X_{s,v,0}^{1}(\Omega)$ to the boundary-value problem (\ref{eq:First=000020PDE}).
We use in particular the Poincaré inequality proved in Cor.~\ref{cor:Poincar=0000E9=000020inequality=000020}.
Moreover, it is possible to take $h(g)=\int_{\Omega}k\nabla(v^{3}g)\,\diff x$
for all $g\in X_{s,v,0}^{1}(\Omega)$ where $k\in W_{s,v}^{1,2}(\Omega)$
(see Lem.~\ref{lem:Ibp}). 

For simplicity, we will limit ourselves to the case where $s(\cdot)=|\cdot|$
(in particular $w_{\alpha}=v^{|\alpha|+1}\in\mathcal{C}^{1}(\Omega)$
for all $\alpha$). We will then write $X_{|\cdot|,v,0}^{1}(\Omega)$
instead of $X_{s,v,0}^{1}(\Omega)$. A generalization to an arbitrary
map $s$ satisfying (\ref{eq:=000020the=000020map=000020s}), is possible
with some small modification of the assumptions. 
\begin{thm}
\label{thm:second=000020application=000020}Let $\Omega\subseteq\R^{d}$
be a bounded open set, $v\in\mathcal{C}^{1,*}(\Omega)$, $(a_{ij})$
be such that $a_{ij}\in L_{v^{-4}}^{\infty}(\Omega)$ and satisfies
\begin{equation}
\exists\mu>0\,\forall\xi\in\R^{d}:\,\mu v^{4}(x)|\xi|^{2}\le\sum_{i,j=1}^{d}a_{ij}(x)\xi_{i}\xi_{j}\,\,\,\mathrm{a.e.}\,\text{in }\,\Omega.\label{eq:ellipticity=000020condition=000020w}
\end{equation}
Let $c$ be such that $v^{-2}c\in L^{\infty}(\Omega)$, and $h$ a
continuous functional on $X_{|\cdot|,v,0}^{1}(\Omega)$. Suppose furthermore
that one of the following assumptions holds
\begin{enumerate}
\item \label{enu:=0000201} $v^{-3}b_{i}\in L^{\infty}(\Omega)$ for all
$i=1,...,d$, $cv^{-2}\geq\sigma>0$ a.e. in $\Omega$, and $d^{\frac{1}{2}}||v^{-3}b||_{L^{\infty}}<2\sqrt{\mu\sigma}$,
where $||v^{-3}b||_{L^{\infty}}:=\max_{i}||v^{-3}b_{i}||_{L^{\infty}}$. 
\item \label{enu:2} $v\in\mathcal{C}_{b}^{1}(\Omega)$ satisfies (\ref{eq:=000020w'<w/x}),
(\ref{eq:dimension}) with $p=2$ (and hence we should have $d\geq3$),
$v^{-4}b_{i}\in L^{\infty}(\Omega)$ for all $i=1,...,d$, $cv^{-2}\geq\sigma>0$
a.e. in $\Omega$, and 
\begin{equation}
0<\mu-C_{\Omega}d^{\frac{1}{2}}||v^{-4}b||_{L^{\infty}}\label{eq:1.17-4}
\end{equation}
\item \label{enu:3} $v^{-3}b_{i}\in L^{\infty}(\Omega)\cap\mathcal{C}^{1}(\Omega)$
for all $i=1,...,d$, and one of the following assertions holds 
\begin{enumerate}[label=(\alph*)]
\item \label{enu:4-1} $\text{div}(b)\leq0$ a.e. in $\Omega$ and $cv^{-2}\geq\sigma>0$
a.e. in $\Omega$
\item \label{enu:c=00005Cleq=0000200} $v^{-2}\text{div}(b)\in L^{\infty}(\Omega)$
and $(c-\frac{1}{2}\text{div}(b))v^{-2}\geq\sigma>0$ a.e. in $\Omega$. 
\end{enumerate}
\end{enumerate}
Then, the boundary-value problem (\ref{eq:First=000020PDE}) has a
unique weak solution $f\in X_{|\cdot|,v,0}^{1}(\Omega)$. In addition,
we have the estimate 

\begin{equation}
\exists\gamma>0:\,||f||_{X_{|\cdot|,v}^{1}}\leq\frac{1}{\gamma}|h|\label{eq:elliptic=000020regularity-1}
\end{equation}
where $|h|$ is the norm operator of $h$.
\end{thm}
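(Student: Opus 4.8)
The plan is to apply the (general, possibly non-symmetric) Lax--Milgram theorem on the Hilbert space $X_{|\cdot|,v,0}^{1}(\Omega)$. This space is the closure of $\mathcal{D}(\Omega)$ inside $H_{V,v}^{1}(\Omega)$ with $V=\{v^{|\alpha|+1}\}_{\alpha\in\pi_1}$, hence a closed subspace of the complete space guaranteed by Thm.~\ref{thm:W^m,p=000020complete=000020}, and therefore itself a Hilbert space. For a fixed $f$ I would write $A:=\|vf\|_{L^2}$ and $B:=\bigl(\sum_{i=1}^{d}\|v^{2}D_v^{i}f\|_{L^2}^{2}\bigr)^{1/2}$, so that $\|f\|_{X_{|\cdot|,v}^{1}}^{2}=A^{2}+B^{2}$. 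Since $h$ is continuous by hypothesis, it remains to establish that the bilinear form $B_v$ of (\ref{eq:Bilnear=000020form}) is bounded and coercive; Lax--Milgram then provides a unique weak solution, and testing $B_v(f,f)=h(f)$ against the coercivity bound yields the estimate (\ref{eq:elliptic=000020regularity-1}) with $\gamma$ equal to the coercivity constant.

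Boundedness is routine and the same in all cases: I would estimate the three terms of (\ref{eq:Bilnear=000020form}) using the weight bounds encoded in the hypotheses. From $a_{ij}\in L_{v^{-4}}^{\infty}(\Omega)$ one has $|a_{ij}|\le Cv^{4}$ a.e., so $|a_{ij}D_v^{i}f\,D_v^{j}g|\le C\,|v^{2}D_v^{i}f|\,|v^{2}D_v^{j}g|$ and Cauchy--Schwarz controls the principal term. Likewise $v^{-2}c\in L^{\infty}$ gives $|cfg|\le C\,|vf|\,|vg|$, and $v^{-3}b_i\in L^{\infty}$ (resp.\ $v^{-4}b_i\in L^{\infty}$) controls the convective term; summing yields $|B_v(f,g)|\le M\|f\|\,\|g\|$.

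The heart of the matter is coercivity $B_v(f,f)\ge\gamma\|f\|^{2}$, where the first-order term $\int_\Omega f(b\cdot D_v)f\,\diff x$ is the main obstacle because it is not sign-definite; the three hypotheses correspond to three distinct ways of taming it. In every case the ellipticity condition (\ref{eq:ellipticity=000020condition=000020w}) gives $\sum_{i,j}\int_\Omega a_{ij}D_v^{i}f\,D_v^{j}f\,\diff x\ge\mu B^{2}$. Under assumption \ref{enu:=0000201} I would bound $|\int_\Omega f(b\cdot D_v)f\,\diff x|\le d^{1/2}\|v^{-3}b\|_{L^\infty}AB$ by Cauchy--Schwarz and use $\int_\Omega cf^{2}\,\diff x\ge\sigma A^{2}$; the quadratic form $\mu B^{2}-d^{1/2}\|v^{-3}b\|_{L^\infty}AB+\sigma A^{2}$ in $(A,B)$ is positive definite exactly because $d^{1/2}\|v^{-3}b\|_{L^\infty}<2\sqrt{\mu\sigma}$. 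Under assumption \ref{enu:2} the weaker bound $|b_i|\le Cv^{4}$ produces the factor $\|v^{2}f\|_{L^2}$ in place of $\|vf\|_{L^2}$, which I would control by the Poincar\'e inequality of Cor.~\ref{cor:Poincar=0000E9=000020inequality=000020}, namely $\|v^{2}f\|_{L^2}\le C_\Omega B$; then $|\int_\Omega f(b\cdot D_v)f\,\diff x|\le C_\Omega d^{1/2}\|v^{-4}b\|_{L^\infty}B^{2}$ is absorbed into $\mu B^{2}$ thanks to (\ref{eq:1.17-4}).

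Under assumption \ref{enu:3} the idea is instead to \emph{eliminate} the convective term by integration by parts. Applying the formula (\ref{eq:IPP}) of Lem.~\ref{lem:Ibp} with $a=b_i$ and $h=f$ (its hypotheses follow from $v^{-3}b_i\in L^{\infty}\cap\mathcal{C}^{1}$ together with the boundedness of $v$, so that $b_iv^{-2}\in\mathcal{C}^{1}$ and $v^{-2}\partial^{j}b_i\in L^{\infty}$), combined with the product rule $D^{i}(b_i f)=f\partial^{i}b_i+b_iD_v^{i}f$ from the same lemma, yields $\int_\Omega b_i f\,D_v^{i}f\,\diff x=-\tfrac12\int_\Omega f^{2}\partial^{i}b_i\,\diff x$, hence $\int_\Omega f(b\cdot D_v)f\,\diff x=-\tfrac12\int_\Omega f^{2}\,\text{div}(b)\,\diff x$. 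In subcase \ref{enu:4-1} this term is nonnegative and $\int_\Omega cf^{2}\,\diff x\ge\sigma A^{2}$ gives coercivity at once; in subcase \ref{enu:c=00005Cleq=0000200} one merges it with the zeroth-order term to obtain $\int_\Omega(c-\tfrac12\text{div}(b))f^{2}\,\diff x\ge\sigma A^{2}$. In all three situations $B_v(f,f)\ge\gamma(A^{2}+B^{2})$ for some $\gamma>0$, and Lax--Milgram concludes. I expect the verification of the hypotheses of Lem.~\ref{lem:Ibp} for $a=b_i$ in case \ref{enu:3} to be the most delicate step, since it is precisely there that the interplay between the regularity of the drift $b$ and the degeneracy of the weight $v$ must be checked.
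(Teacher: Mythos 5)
Your framework (Lax--Milgram on the Hilbert space $X_{|\cdot|,v,0}^{1}(\Omega)$), your boundedness estimate, and your treatment of cases \ref{enu:=0000201} and \ref{enu:2} coincide with the paper's proof: weighted Cauchy--Schwarz plus positive-definiteness of the quadratic form in $(A,B)$ for case \ref{enu:=0000201}, and Cor.~\ref{cor:Poincar=0000E9=000020inequality=000020} to absorb the drift into the principal term for case \ref{enu:2}. The gap is in case \ref{enu:3}: you invoke Lem.~\ref{lem:Ibp} with $a=b_{i}$, but its hypotheses are not available there. First, the lemma requires $v\in L^{\infty}(\Omega)$, while in case \ref{enu:3} the theorem assumes only $v\in\mathcal{C}^{1,*}(\Omega)$ (boundedness of $v$ is hypothesized only in case \ref{enu:2}), so ``the boundedness of $v$'' that you use is not given. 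Second, even granting $v\in L^{\infty}(\Omega)$, the condition $\partial^{\alpha}b_{i}\in L_{v^{-2}}^{\infty}(\Omega)$ does not follow from $v^{-3}b_{i}\in L^{\infty}(\Omega)\cap\mathcal{C}^{1}(\Omega)$: writing $b_{i}=v^{3}(v^{-3}b_{i})$ gives
\[
v^{-2}\partial_{j}b_{i}=3(\partial_{j}v)(v^{-3}b_{i})+v\,\partial_{j}(v^{-3}b_{i}),
\]
and neither $\partial_{j}v$ nor $\partial_{j}(v^{-3}b_{i})$ is assumed bounded (they are merely continuous on the open set $\Omega$). So the appeal to Lem.~\ref{lem:Ibp}, including its product rule $D^{i}(b_{i}f)=f\partial_{i}b_{i}+b_{i}D_{v}^{i}f$, is unjustified under the stated hypotheses.

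The paper handles case \ref{enu:3} by a direct density argument that needs none of this: take $(\phi_{n})\subseteq\mathcal{D}(\Omega)$ converging to $f$ in $X_{|\cdot|,v}^{1}(\Omega)$ (such a sequence exists by the very definition of $X_{|\cdot|,v,0}^{1}$), and pass to the limit in the drift term using only $v^{-3}b_{i}\in L^{\infty}(\Omega)$, via
\begin{multline*}
\left|\int_{\Omega}b_{i}(fD_{v}^{i}f-\phi_{n}\partial_{i}\phi_{n})\,\diff x\right|\\
\leq\|b_{i}v^{-3}\|_{L^{\infty}}\left(\|fv\|_{L^{2}}\|v^{2}(D_{v}^{i}f-\partial_{i}\phi_{n})\|_{L^{2}}+\|v(f-\phi_{n})\|_{L^{2}}\|v^{2}\partial_{i}\phi_{n}\|_{L^{2}}\right).
\end{multline*}
For each fixed $n$, classical integration by parts (legitimate because $b_{i}=v^{3}(v^{-3}b_{i})\in\mathcal{C}^{1}(\Omega)$ and $\phi_{n}$ has compact support) gives $\int_{\Omega}\phi_{n}(b\cdot\nabla)\phi_{n}\,\diff x=-\tfrac{1}{2}\int_{\Omega}\mathrm{div}(b)\,\phi_{n}^{2}\,\diff x$. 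In subcase \ref{enu:4-1} one then uses only that each of these terms is $\geq0$, hence so is the limit $\int_{\Omega}f(b\cdot D_{v})f\,\diff x$; no identity with $-\tfrac{1}{2}\int_{\Omega}f^{2}\mathrm{div}(b)\,\diff x$ is needed (and that integral need not even be finite under the subcase \ref{enu:4-1} hypotheses). In subcase \ref{enu:c=00005Cleq=0000200}, the extra hypothesis $v^{-2}\mathrm{div}(b)\in L^{\infty}(\Omega)$ lets one pass the limit inside, since $\left|\int_{\Omega}\mathrm{div}(b)(\phi_{n}^{2}-f^{2})\,\diff x\right|\leq\|v^{-2}\mathrm{div}(b)\|_{L^{\infty}}\|v(\phi_{n}-f)\|_{L^{2}}\|v(\phi_{n}+f)\|_{L^{2}}\to0$, and one merges the resulting term with the zeroth-order term exactly as you intend. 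If you replace your appeal to Lem.~\ref{lem:Ibp} by this density argument, the rest of your proof goes through.
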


\begin{proof}
We check that the bilinear form $B_{v}$ defined in (\ref{eq:Bilnear=000020form})
satisfies the assumptions of the Lax-Milgram theorem in the Hilbert
space $X_{|\cdot|,v,0}^{1}(\Omega)$. Note that in any cases, we have
$b_{i}v^{-3}\in L^{\infty}(\Omega)$. Hence, for any $f$, $g\in X_{|\cdot|,v,0}^{1}(\Omega)$
we have 
\begin{multline*}
\left|B_{v}(f,g)\right|\leq C\sum_{i,j=1}^{d}||v^{2}D_{v}^{i}f||_{L^{2}}||v^{2}D_{v}^{j}g||_{L^{2}}+C\sum_{i=1}^{d}||v^{2}D_{v}^{i}f||_{L^{2}}||vg||_{L^{2}}\\
+C||vf||_{L^{2}}||vg||_{L^{2}}\leq C||f||_{X_{|\cdot|,v}^{1}}||g||_{X_{|\cdot|,v}^{1}}
\end{multline*}
for some non-negative constant $C$, which proves the continuity of
the bilinear form $B_{v}$. We prove now that the bilinear form $B_{v}$
is coercive, that is, 
\begin{equation}
\exists\gamma>0\,\forall f\in X_{|\cdot|,v,0}^{1}(\Omega):\,\gamma||f||_{X_{|\cdot|,v}^{1}}^{2}\leq B_{v}(f,f).\label{eq:coercivity}
\end{equation}
Fix $f\in X_{|\cdot|,v,0}^{1}$. Under the assumptions on $(a_{ij})$
we have 
\[
\mu||v^{2}D_{v}f||_{L^{2}}^{2}+\int_{\Omega}cf^{2}\,\diff x+\int_{\Omega}f(b\cdot D_{v})f\,\diff x\leq B_{v}(f,f).
\]
Suppose that assertion \ref{enu:=0000201} holds. Then by Hölder's
inequality, we have 
\begin{equation}
\mu||v^{2}D_{v}f||_{L^{2}}^{2}+\sigma||vf||_{L^{2}}^{2}-d^{\frac{1}{2}}||v^{-3}b||_{L^{\infty}}||v^{2}D_{v}f||_{L^{2}}||vf||_{L^{2}}\leq B_{v}(f,f)\label{eq:coercive}
\end{equation}
Thanks to the assumptions of the assertion \ref{enu:=0000201}, it
is possible to choose $\gamma$ sufficiently small so that $0<\gamma\leq\min(\mu,\sigma)$
and $d^{\frac{1}{2}}||b||_{L_{v^{-3}}^{\infty}}<2\sqrt{(\mu-\gamma)(\sigma-\gamma)}$.
Thus, it follows that left hand side of (\ref{eq:coercive}) is bounded
from bellow by $\gamma(||v^{2}D_{v}f||_{L^{2}}^{2}+||vf||_{L^{2}}^{2})$
which gives (\ref{eq:coercivity}). 

In case assertion \ref{enu:2} holds, we use Hölder's inequality to
obtain 
\[
\mu||v^{2}D_{v}f||_{L^{2}}^{2}+\sigma||vf||_{L^{2}}^{2}-d^{\frac{1}{2}}||v^{-4}b||_{L^{\infty}}||v^{2}D_{v}f||_{L^{2}}||v^{2}f||_{L^{2}}\leq B_{v}(f,f)
\]
By Cor.~\ref{cor:Poincar=0000E9=000020inequality=000020}, we have
\[
(\mu-C_{\Omega}d^{\frac{1}{2}}||v^{-4}b||_{L^{\infty}})||v^{2}D_{v}f||_{L^{2}}^{2}+\sigma||vf||_{L^{2}}^{2}\leq B_{v}(f,f).
\]
Therefore, (\ref{eq:coercivity}) holds with the constant $\gamma:=\min((\mu-C_{\Omega}d^{\frac{1}{2}}||v^{-4}b||_{L^{\infty}}),\sigma)$
which is non-negative thanks to (\ref{eq:1.17-4}). 

Assume now that \ref{enu:3} holds. By (\ref{eq:ellipticity=000020condition=000020w})
we have 
\begin{equation}
\mu||v^{2}D_{v}f||_{L^{2}}^{2}+\int_{\Omega}f(b\cdot D_{v})f\,\diff x+\int_{\Omega}cf^{2}\,\diff x\leq B_{v}(f,f).\label{eq:coercive-1}
\end{equation}
Let $(\phi_{n})$ be a sequence of $\mathcal{D}(\Omega)$ that converges
to $f$ in $X_{|\cdot|,v}^{1}(\Omega)$. Since $v^{-3}b_{i}\in L^{\infty}(\Omega)$
we have 
\begin{multline*}
\left|\int_{\Omega}b_{i}(fD_{v}^{i}f-\phi_{n}\partial_{i}\phi_{n})\,\diff x\right|\leq\\
||b_{i}v^{-3}||_{L^{\infty}}||fv||_{L^{2}}||v^{2}(D_{v}^{i}f-\partial_{i}\phi_{n})||_{L^{2}}+||b_{i}v^{-3}||_{L^{\infty}}||v(f-\phi_{n})||_{L^{2}}||v^{2}\partial_{i}\phi_{n}||_{L^{2}}.
\end{multline*}
Thus,
\begin{equation}
\int_{\Omega}f(b\cdot D_{v})f\,\diff x=\lim_{n\to\infty}\int_{\Omega}\phi_{n}(b\cdot\nabla)\phi_{n}\,\diff x=-\frac{1}{2}\lim_{n\to\infty}\int_{\Omega}\text{div}(b)\phi_{n}^{2}\,\diff x.\label{eq:IBP}
\end{equation}
Hence, in case \ref{enu:4-1} of \ref{enu:3} holds, the right hand
side of the latter equality is positive. Thus, the left hand side
of (\ref{eq:coercive-1}) is bounded from bellow by $\gamma||f||_{X_{|\cdot|,v}^{1}}^{2}$
where $\gamma=\min(\mu,\sigma)>0$, and hence (\ref{eq:coercivity})
is proved, and in case \ref{enu:c=00005Cleq=0000200} of \ref{enu:3}
holds, the right hand side of (\ref{eq:IBP}) is equal to $-\frac{1}{2}\int_{\Omega}\text{div}(b)f^{2}\,\diff x$.
Replacing this in (\ref{eq:coercive-1}) we get 
\[
\gamma||f||_{X_{|\cdot|,v}^{1}}^{2}\leq\mu||v^{2}D_{v}f||_{L^{2}}^{2}+\sigma||vf||_{L^{2}}^{2}\leq B_{v}(f,f)
\]
where $\gamma:=\min(\mu,\sigma)>0$. Thus, (\ref{eq:coercivity})
holds. 

Therefore, if one of the assertions \ref{enu:=0000201}-\ref{enu:3}
holds, the bilinear form $B_{v}$ is coercive. On the other hand,
$h$ is a continuous functional on $X_{|\cdot|,v,0}^{1}$. We conclude
then by the Lax-Milgram theorem. 

For the estimate (\ref{eq:elliptic=000020regularity-1}), it follows
easily from (\ref{eq:weak=000020formulation}), (\ref{eq:coercivity}),
and the continuity of $h$. Indeed, if we denote by $f$ the unique
weak solution of (\ref{eq:First=000020PDE}), then
\[
\gamma||f||_{X_{|\cdot|,v}^{1}}^{2}\leq B_{v}(f,f)=h(f)\leq|h|||f||_{X_{|\cdot|,v}^{1}}.
\]
\end{proof}
\begin{example}
Let $v\in\mathcal{C}^{1,*}(\Omega)$, and let $(\tilde{a}_{ij})$,
$(\tilde{b})=(\tilde{b}_{i})$, $\tilde{c}$ be such that 
\begin{enumerate}
\item $\forall i,j=1,...,d:\,\tilde{a}_{ij},\,\tilde{b}_{i},\,\tilde{c}\in L^{\infty}(\Omega)$
\item $\exists\mu>0$ such that $\sum_{i,j}\tilde{a}_{ij}\xi_{i}\xi_{j}\geq\mu|\xi|^{2}$
a.e. in $\Omega$ and for all $\xi\in\R^{d}$
\item $\exists\sigma>0$ such that $\tilde{c}\geq\sigma$ a.e. in $\Omega$,
and $d^{\frac{1}{2}}||\tilde{b}||_{L^{\infty}(\Omega)}<2\sqrt{\mu\sigma}$.
\end{enumerate}
Set 
\[
\forall i,j=1,...,d:\,a_{ij}=v^{4}\tilde{a}_{ij},\,b_{i}=v^{\text{3}}\tilde{b}_{i},\,c=v^{\text{2}}\tilde{c}.
\]
Then, the functions $(a_{ij})$ satisfy (\ref{eq:ellipticity=000020condition=000020w}),
and the functions $(b_{i})$, $c$ satisfy \ref{enu:=0000201} of
Thm.~\ref{thm:second=000020application=000020}.
\end{example}

In the next proposition we give sufficient conditions under which
the solution is not locally integrable. 
\begin{prop}
\label{prop:non-loc=000020int=000020sol}Let $\Omega\subseteq\R^{d}$
be a bounded open set, $v\in\mathcal{C}^{2,*}(\Omega)$ be a map satisfying
(\ref{eq:2.9}). Let $f\in X_{|\cdot|,v,0}^{1}(\Omega)$ be such that
(\ref{eq:weak=000020formulation}) holds where 
\begin{enumerate}
\item \label{enu:=000020non-loc=000020solution=0000201}$\forall i,j=1,...,d:\,a_{ij}\in\mathcal{C}^{1}(\Omega)\cap L_{v^{-4}}^{\infty}(\Omega)$
and $\forall\alpha\in\pi_{1}^{0}:\,\partial^{\alpha}a_{ij}\in L_{v^{-3},\mathrm{loc}}^{\infty}(\Omega)$ 
\item \label{enu:=000020non-loc=000020solution=0000202}$\forall i=1,...,d:\,b_{i}\in\mathcal{C}^{1}(\Omega)\cap L_{v^{-3}}^{\infty}(\Omega)$
and $\forall\alpha\in\pi_{1}^{0}:\,\partial^{\alpha}b_{i}\in L_{v^{-2},\mathrm{loc}}^{\infty}(\Omega)$
\item \label{enu:non-local=000020solution=0000204}$c\in L_{v^{-2}}^{\infty}(\Omega)$ 
\item \label{enu:=000020non-loc=000020solution=0000203} the map $h$ is
defined by $h(g):=\int_{\Omega}kg\diff x$, where $k\in L_{v^{-1}}^{2}(\Omega)$,
$k\geq0$ $\mathrm{a.e.}$ in $\Omega$ and $kv^{-2}\not\in L_{\mathrm{loc}}^{1}(\Omega)$ 
\end{enumerate}
Then, $f\not\in L_{\mathrm{loc}}^{1}(\Omega)$. 
\end{prop}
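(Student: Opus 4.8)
The plan is to argue by contradiction: assuming $f\in L_{\mathrm{loc}}^{1}(\Omega)$, I will show that $kv^{-2}\in L_{\mathrm{loc}}^{1}(\Omega)$, which contradicts hypothesis \ref{enu:=000020non-loc=000020solution=0000203}. Since that hypothesis forces $\int_{U}kv^{-2}\,\diff x=+\infty$ for some ball $U$ around a point $x_{0}$, which by (\ref{eq:2.9}) lies in the compact region where $|v|$ is small, I first fix $\phi\in\mathcal{D}(\Omega)$ with $\phi\geq0$, supported near $x_{0}$ and chosen so that $\int_{\Omega}kv^{-2}\phi\,\diff x=+\infty$. Using the cut-offs $\chi_{n}$ and the sets $M_{n}$ from the proof of Prop.~\ref{prop:comparison=000020with=000020weak=000020derivative=000020} (so $\chi_{n}\equiv0$ on $M_{2n}$, $\chi_{n}\equiv1$ on $M_{n}^{c}$, and $|v|\geq1/(2n)$ on $\mathrm{supp}(\chi_{n})$), set $\zeta_{n}:=\chi_{n}v^{-2}\phi$. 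Because $v\in\mathcal{C}^{2,*}(\Omega)$ is bounded away from $0$ on $\mathrm{supp}(\chi_{n})$, each $\zeta_{n}$ lies in $\mathcal{C}_{c}^{2}(\Omega)$ and hence, after mollification in its fixed compact support (on which the weights $|v|,|v|^{2}$ are bounded), in $X_{|\cdot|,v,0}^{1}(\Omega)$; thus $\zeta_{n}$ is admissible in (\ref{eq:weak=000020formulation}). Taking $g=\zeta_{n}$ in (\ref{eq:weak=000020formulation})--(\ref{eq:Bilnear=000020form}), with $D_{v}^{j}\zeta_{n}=\partial_{j}\zeta_{n}$, the right-hand side equals $h(\zeta_{n})=\int_{\Omega}k\chi_{n}v^{-2}\phi\,\diff x$, which tends to $+\infty$ by monotone convergence ($k,v^{-2},\phi\geq0$ and $\chi_{n}\uparrow1$). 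It therefore suffices to prove that $B_{v}(f,\zeta_{n})$ stays bounded.

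The zeroth-order term is where the assumption $f\in L_{\mathrm{loc}}^{1}(\Omega)$ is used directly: $\int_{\Omega}cf\zeta_{n}\,\diff x=\int_{\Omega}(cv^{-2})f\chi_{n}\phi\,\diff x$ is bounded by $\|cv^{-2}\|_{L^{\infty}}\|\phi\|_{L^{\infty}}\|f\|_{L^{1}(\mathrm{supp}\,\phi)}$ by \ref{enu:non-local=000020solution=0000204}, and converges by dominated convergence. In the first-order ($b$) and principal ($a$) terms the factor $D_{v}^{i}f$ is weighted by negative powers of $v$ and need not be locally integrable across the bad set, so rather than estimating them directly I integrate by parts to move the derivative onto the (smooth) coefficients and test function. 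Writing $a_{ij}\partial_{j}\zeta_{n}=v^{2}\psi$ with $\psi:=v^{-2}a_{ij}\partial_{j}\zeta_{n}\in\mathcal{C}_{c}^{1}(\Omega)$ (legitimate on $\mathrm{supp}(\chi_{n})$, where $v\neq0$ and $a_{ij}\in\mathcal{C}^{1}$, $\zeta_{n}\in\mathcal{C}^{2}$), the defining identity (\ref{eq:=000020alpha-Weak=000020=000020derivative}) of $D_{v}$, in the $\mathcal{C}_{c}^{|\alpha|}$ form of Rem.~\ref{rem:weak=000020alpha=000020derivative=000020}, gives $\int_{\Omega}a_{ij}(D_{v}^{i}f)\partial_{j}\zeta_{n}\,\diff x=-\int_{\Omega}f\,\partial_{i}(a_{ij}\partial_{j}\zeta_{n})\,\diff x$, and similarly $\int_{\Omega}b_{i}(D_{v}^{i}f)\zeta_{n}\,\diff x=-\int_{\Omega}f\,\partial_{i}(b_{i}\zeta_{n})\,\diff x$.

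It then remains to bound the new multipliers $\partial_{i}(a_{ij}\partial_{j}\zeta_{n})$ and $\partial_{i}(b_{i}\zeta_{n})$ in $L^{\infty}(\Omega)$ uniformly in $n$, after which $f\in L_{\mathrm{loc}}^{1}(\Omega)$ closes the argument. On $M_{n}^{c}$ one has $\zeta_{n}=v^{-2}\phi$ independently of $n$, and the weighted bounds of \ref{enu:=000020non-loc=000020solution=0000201}--\ref{enu:=000020non-loc=000020solution=0000202} (namely $|a_{ij}|\leq C|v|^{4}$, $|\partial^{\alpha}a_{ij}|\leq C|v|^{3}$, $|b_{i}|\leq C|v|^{3}$, $|\partial^{\alpha}b_{i}|\leq C|v|^{2}$ locally) cancel the powers $|v|^{-3}$ and $|v|^{-4}$ created by differentiating $v^{-2}\phi$ once or twice; thus these multipliers are bounded there and dominated convergence yields convergence to the finite limits $-\int_{\Omega}f\,\partial_{i}(a_{ij}\partial_{j}(v^{-2}\phi))\,\diff x$ and $-\int_{\Omega}f\,\partial_{i}(b_{i}v^{-2}\phi)\,\diff x$. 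On the transition layer $M_{n}\cap M_{2n}^{c}$, where $|v|\sim1/n$, the derivatives of $\chi_{n}v^{-2}$ grow like $n^{3}$ and $n^{4}$, but these are matched exactly by $|b_{i}|\sim n^{-3}$ and $|a_{ij}|\sim n^{-4}$ (and $|\partial^{\alpha}a_{ij}|\sim n^{-3}$ against the $n^{3}$ from $\partial_{j}\zeta_{n}$, etc.), so the multipliers stay $O(1)$ and the layer contributes at most $C\int_{M_{n}\cap M_{2n}^{c}}|f|\,\diff x\to0$. I expect this power-counting on the transition layer, together with the justification of the two integrations by parts, to be the main obstacle; it is exactly here that the exponents in \ref{enu:=000020non-loc=000020solution=0000201}--\ref{enu:=000020non-loc=000020solution=0000202} are calibrated, and one genuinely needs $v\in\mathcal{C}^{2,*}$ rather than $\mathcal{C}^{1}$ because the principal term forces two derivatives onto $v^{-2}$. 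Collecting the three estimates, $B_{v}(f,\zeta_{n})$ converges to a finite limit while $h(\zeta_{n})\to+\infty$, contradicting (\ref{eq:weak=000020formulation}); hence $f\notin L_{\mathrm{loc}}^{1}(\Omega)$.
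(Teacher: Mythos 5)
Your proposal is correct and is essentially the paper's own proof: you test the weak formulation with $\phi\chi_{n}v^{-2}$, move the derivatives off $D_{v}^{i}f$ by invoking the defining identity of $D_{v}$ with test functions of the form $v^{-2}a_{ij}\partial_{j}(\phi\chi_{n}v^{-2})$ and $v^{-2}b_{i}\phi\chi_{n}v^{-2}$, establish uniform-in-$n$ $L^{\infty}$ bounds on the resulting multipliers by exactly the power counting (coefficient decay $|v|^{4},|v|^{3},|v|^{2}$ cancelling the $n^{4},n^{3},n^{2}$ growth on the layer $M_{n}\cap M_{2n}^{c}$, with (\ref{eq:2.9}) guaranteeing the layers stay in a fixed compact set), and conclude by monotone convergence against $kv^{-2}\notin L_{\mathrm{loc}}^{1}(\Omega)$. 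Your added mollification remark justifying $\zeta_{n}\in X_{|\cdot|,v,0}^{1}(\Omega)$ is a small point the paper leaves implicit, but otherwise the two arguments coincide.
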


\begin{proof}
Use (\ref{eq:weak=000020formulation}) with $g=\phi\chi_{n}v^{-2}\in\mathcal{C}_{c}^{2}(\Omega)$
(because supp$(\chi_{n})\subseteq M_{2n}^{c}$) where $\phi\in\mathcal{D}(\Omega)$
and $(\chi_{n})$ is the sequence of $\mathcal{C}^{2}(\Omega)$ functions
constructed in the proof of Prop.~\ref{prop:comparison=000020with=000020weak=000020derivative=000020}
we get 
\begin{multline}
\sum_{i,j=1}^{d}\int_{\Omega}a_{ij}D_{v}^{i}f\partial_{j}(\phi\chi_{n}v^{-2})\,\diff x+\sum_{i=1}^{d}\int_{\Omega}b_{i}\phi\chi_{n}v^{-2}D_{v}^{i}f\,\diff x\\
+\int_{\Omega}cf\phi\chi_{n}v^{-2}\,\diff x=\int_{\Omega}k\phi\chi_{n}v^{-2}\,\diff x\label{eq:=000020weak=000020f=0000202}
\end{multline}
Using the weak derivative in the sense of $L_{v,\mathrm{loc}}^{1}(\Omega)$
with $a_{ij}v^{-2}\partial_{j}(\phi\chi_{n}v^{-2})\in\mathcal{C}_{c}^{1}(\Omega)$
we get 
\begin{equation}
\int_{\Omega}a_{ij}D_{v}^{i}f\partial_{j}(\phi\chi_{n}v^{-2})\,\diff x=-\int_{\Omega}f\partial_{i}(a_{ij}\partial_{j}(\phi\chi_{n}v^{-2}))\,\diff x\label{eq:2.27}
\end{equation}
Similarly, using the weak derivative in the sense of $L_{v,\mathrm{loc}}^{1}(\Omega)$
with the test function $b_{i}\phi\chi_{n}v^{-4}\in\mathcal{C}_{c}^{1}(\Omega)$
we get 
\begin{equation}
\int_{\Omega}b_{i}\phi\chi_{n}v^{-2}D_{v}^{i}f\,\diff x=-\int_{\Omega}f\partial_{i}(b_{i}\phi\chi_{n}v^{-2})\,\diff x\label{eq:2.28}
\end{equation}
Using (\ref{eq:2.27}) and (\ref{eq:2.28}) in (\ref{eq:=000020weak=000020f=0000202})
we get 
\begin{multline}
-\sum_{i,j=1}^{d}\int_{\Omega}f\partial_{i}(a_{ij}\partial_{j}(\phi\chi_{n}v^{-2}))\,\diff x-\sum_{i=1}^{d}\int_{\Omega}f\partial_{i}(b_{i}\phi\chi_{n}v^{-2})\,\diff x\\
+\int_{\Omega}cf\phi\chi_{n}v^{-2}\,\diff x=\int_{\Omega}k\phi\chi_{n}v^{-2}\,\diff x.\label{eq:non-loc=000020sol=0000204}
\end{multline}
A simple calculation gives 
\begin{multline}
\partial_{i}(a_{ij}\partial_{j}(\phi\chi_{n}v^{-2}))=\chi_{n}(\partial_{i}a_{ij}\partial_{j}(v^{-2}\phi)+a_{ij}\partial_{ij}(v^{-2}\phi))\\
+\partial_{j}\chi_{n}(v^{-2}\phi\partial_{i}a_{ij}+a_{ij}\partial_{i}(v^{-2}\phi))+\partial_{i}\chi_{n}(a_{ij}\partial_{j}(v^{-2}\phi))+a_{ij}v^{-2}\phi\partial_{ij}\chi_{n}\label{eq:=000020non-loc=000020sol=0000201}
\end{multline}
Developing the expression $\partial_{i}a_{ij}\partial_{j}(v^{-2}\phi)+a_{ij}\partial_{ij}(v^{-2}\phi)$
we get 
\begin{multline*}
\partial_{i}a_{ij}(-2(\partial_{j}v)v^{-3}\phi+v^{-2}\partial_{j}\phi)+\\
a_{ij}(v^{-2}\partial_{ij}\phi-2(\partial_{j}v)v^{-3}\partial_{i}\phi-2(\partial_{i}v)v^{-3}\partial_{j}\phi-2(\partial_{ij}v)v^{-3}\phi+6(\partial_{i}v)(\partial_{j}v)v^{-4}\phi)
\end{multline*}
By assumption \ref{enu:=000020non-loc=000020solution=0000201}
\begin{equation}
\exists C>0\,\forall n\in\N:\,|\chi_{n}(\partial_{i}a_{ij}\partial_{j}(v^{-2}\phi)+a_{ij}\partial_{ij}(v^{-2}\phi))|\leq C1_{V}\label{eq:non-loc=000020sol=0000202}
\end{equation}
where $1_{V}$ is the characteristic function of the set $V:=\mathrm{supp}(\phi)$.
Note now that, for all $\alpha\in\pi_{2}^{0}$ we have $\partial^{\alpha}\chi_{n}\equiv0$
on $M_{n}^{c}$ (because by $\chi_{n}\equiv1$ on $M_{n}^{c}$), and
that $|\partial^{\alpha}\chi_{n}|\leq Cn^{|\alpha|}$ on $M_{n}$
for all $n$ sufficiently large, where $C>0$ is independent of $n$.
This is because $M_{n}\Subset\Omega$ (by assumption (\ref{eq:2.9}))
(and hence $\partial^{\alpha}v$ is bounded on $M_{n}$), and because
$|\eta_{n}^{(k)}|\leq Cn^{|k|}$ where the constant $C>0$ depends
on $k$ but not on $n$. It follows that 
\begin{equation}
\exists C>0\,\exists n_{0}\in\N\,\forall n\geq n_{0}\,\forall\alpha\in\pi_{2}^{0}:\,|v^{|\alpha|}\partial^{\alpha}\chi_{n}|\leq C\label{eq:2.29}
\end{equation}
since $|v|\leq1/n$ on $M_{n}$. Thus, one can easily prove (as above),
using assumption \ref{enu:=000020non-loc=000020solution=0000201}
that 
\begin{equation}
\left|\partial_{j}\chi_{n}(v^{-2}\phi\partial_{i}a_{ij}+a_{ij}\partial_{i}(v^{-2}\phi))+\partial_{i}\chi_{n}(a_{ij}\partial_{j}(v^{-2}\phi))+a_{ij}v^{-2}\phi\partial_{ij}\chi_{n}\right|\leq C1_{V}.\label{eq:loc=000020sol=0000203}
\end{equation}
for all $n\geq n_{0}$. Assume by contradiction that $f\in L_{\mathrm{loc}}^{1}(\Omega)$.
Then, by (\ref{eq:=000020non-loc=000020sol=0000201}), (\ref{eq:non-loc=000020sol=0000202})
and (\ref{eq:loc=000020sol=0000203}) we have 
\begin{equation}
\forall n\geq n_{0}:\,\left|\sum_{i,j=1}^{d}\int_{\Omega}f(\partial_{i}(a_{ij}\partial_{j}(\phi\chi_{n}v^{-2})))\,\diff x\right|\leq C||f||_{L^{1}(V)}.\label{eq:=000020non=000020loc=000020sol=0000205}
\end{equation}
By \ref{enu:=000020non-loc=000020solution=0000202} and \ref{enu:non-local=000020solution=0000204},
and using similar arguments as above one can prove that 
\begin{equation}
\forall n\geq n_{0}:\,\left|\sum_{i=1}^{d}\int_{\Omega}f\partial_{i}(b_{i}\phi\chi_{n}v^{-2})+fc\phi\chi_{n}v^{-2}\,\diff x\right|\leq C||f||_{L^{1}(V)}\label{eq:=000020non-loc=000020sol=0000206}
\end{equation}
We deduce from (\ref{eq:non-loc=000020sol=0000204}), (\ref{eq:=000020non=000020loc=000020sol=0000205}),
(\ref{eq:=000020non-loc=000020sol=0000206}) that 
\[
\exists C>0\,\forall n\geq n_{0}:\,\int_{\Omega}k\phi\chi_{n}v^{-2}\,\diff x\leq C||f||_{L^{1}(V)}
\]
Let now $K\Subset\Omega$ be such that $kv^{-2}\not\in L^{1}(K)$
and choose $\phi$ so that $\phi\geq0$ on $\Omega$ and $\phi\equiv1$
on $K$. It follows that $\int_{K}k\chi_{n}v^{-2}\,\diff x\leq C$
for all $n\geq n_{0}$, which leads to a contradiction when choosing
$n$ sufficiently large since 
\[
\lim_{n\to\infty}\int_{K}k\chi_{n}v^{-2}\,\diff x=\int_{K}kv^{-2}\,\diff x=\infty
\]
by the monotone convergence theorem. 
\end{proof}
\begin{rem}
\label{rem:non-loc=000020integrable=000020solution=000020}If 
\[
\forall i,j=1,...,d:\,a_{ij}=\tilde{a}_{ij}v^{4},\,b_{i}=\tilde{b}_{i}v^{3}
\]
where 
\[
\forall i,j=1,...,d:\,\tilde{a}_{ij}\in\mathcal{C}^{1}(\Omega),\,\tilde{b}_{i}\in\mathcal{C}^{1}(\Omega),
\]
then assumptions \ref{enu:=000020non-loc=000020solution=0000201},
\ref{enu:=000020non-loc=000020solution=0000202} of the latter proposition
hold. 
\end{rem}

\begin{example}
\label{exa:=000020Non=000020localy=000020integrabl=000020solution=000020}For
any integer $m\geq1$, and for any $\beta\in(d/2-2m,d/2)$, the boundary-value
problem 
\[
\begin{cases}
-\sum_{i=1}^{d}D_{i}(|x|^{8m}D_{i}f)+|x|^{4m}f=|x|^{-\beta+2m} & \text{in }\{x\in\R^{d}\mid|x|<1\}\\
f=0 & \text{on }\{x\in\R^{d}\mid|x|=1\}
\end{cases}
\]
has a unique non-locally integrable solution $f\in X_{|\cdot|,|x|^{2m},0}^{1}(\Omega)$
thanks to Thm.~\ref{thm:second=000020application=000020} and to
Prop.~\ref{prop:non-loc=000020int=000020sol}. 
\end{example}

\end{document}